\newif\ifpersonal
\theoremstyle{plain}
\newtheorem{thm-intro}{Theorem}
\newtheorem{thm}{Theorem}[section]
\newtheorem*{thm*}{Theorem}
\newtheorem{lem}[thm]{Lemma}
\newtheorem{prop}[thm]{Proposition}
\newtheorem{cor}[thm]{Corollary}
\theoremstyle{definition}
\newtheorem{defin}[thm]{Definition}
\theoremstyle{remark}
\newtheorem{rem}[thm]{Remark}
\numberwithin{equation}{section}
\newcommand*{\personal}[1]{\textcolor[rgb]{0.6,0.6,1}{(Personal: #1)}}
\newcommand*{\todo}[1]{\textcolor{red}{(Todo: #1)}}
\newcommand*{\personal}[1]{\ignorespaces}
\newcommand*{\todo}[1]{\ignorespaces}
\DeclareFontFamily{U}{BOONDOX-calo}{\skewchar\font=45 }
\DeclareFontShape{U}{BOONDOX-calo}{m}{n}{<-> s*[1.05] BOONDOX-r-calo}{}
\DeclareFontShape{U}{BOONDOX-calo}{b}{n}{<-> s*[1.05] BOONDOX-b-calo}{}
\DeclareMathAlphabet{\mathcalboondox}{U}{BOONDOX-calo}{m}{n}
\let\save@mathaccent\mathaccent
\newcommand*\if@single[3]{%
	\setbox0\hbox{${\mathaccent"0362{#1}}^H$}%
	\setbox2\hbox{${\mathaccent"0362{\kern0pt#1}}^H$}%
	\ifdim\ht0=\ht2 #3\else #2\fi
}
\newcommand*\rel@kern[1]{\kern#1\dimexpr\macc@kerna}
\newcommand*\widebar[1]{\@ifnextchar^{{\wide@bar{#1}{0}}}{\wide@bar{#1}{1}}}
\newcommand*\wide@bar[2]{\if@single{#1}{\wide@bar@{#1}{#2}{1}}{\wide@bar@{#1}{#2}{2}}}
\newcommand*\wide@bar@[3]{%
	\begingroup
	\def\mathaccent##1##2{%
		\let\mathaccent\save@mathaccent
		\if#32 \let\macc@nucleus\first@char \fi
		\setbox\z@\hbox{$\macc@style{\macc@nucleus}_{}$}%
		\setbox\tw@\hbox{$\macc@style{\macc@nucleus}{}_{}$}%
		\dimen@\wd\tw@
		\advance\dimen@-\wd\z@
		\divide\dimen@ 3
		\@tempdima\wd\tw@
		\advance\@tempdima-\scriptspace
		\divide\@tempdima 10
		\advance\dimen@-\@tempdima
		\ifdim\dimen@>\z@ \dimen@0pt\fi
		\rel@kern{0.6}\kern-\dimen@
		\if#31
		\overline{\rel@kern{-0.6}\kern\dimen@\macc@nucleus\rel@kern{0.4}\kern\dimen@}%
		\advance\dimen@0.4\dimexpr\macc@kerna
		\let\final@kern#2%
		\ifdim\dimen@<\z@ \let\final@kern1\fi
		\if\final@kern1 \kern-\dimen@\fi
		\else
		\overline{\rel@kern{-0.6}\kern\dimen@#1}%
		\fi
	}%
	\macc@depth\@ne
	\let\math@bgroup\@empty \let\math@egroup\macc@set@skewchar
	\mathsurround\z@ \frozen@everymath{\mathgroup\macc@group\relax}%
	\macc@set@skewchar\relax
	\let\mathaccentV\macc@nested@a
	\if#31
	\macc@nested@a\relax111{#1}%
	\else
	\def\gobble@till@marker##1\endmarker{}%
	\futurelet\first@char\gobble@till@marker#1\endmarker
	\ifcat\noexpand\first@char A\else
	\def\first@char{}%
	\fi
	\macc@nested@a\relax111{\first@char}%
	\fi
	\endgroup
}
\tikzset{
  closed/.style = {decoration = {markings, mark = at position 0.5 with { \node[transform shape, xscale = .8, yscale=.4] {/}; } }, postaction = {decorate} },
  open/.style = {decoration = {markings, mark = at position 0.5 with { \node[transform shape, scale = .7] {$\circ$}; } }, postaction = {decorate} }
}
\begin{document}
\title{Cosheaf Theory and Overconvergent Verdier Duality for Rigid Analytic Spaces}

\author{Vaibhav Murali}
\address{}

\date{Nov 19, 2020}
\subjclass[2010]{Primary 14G22, Secondary 18F20}
\keywords{rigid analytic geometry, overconvergence, Verdier duality, cosheaf theory}

\begin{abstract}
This paper develops aspects of cosheaf theory on rigid analytic spaces, and demonstrates a sheaf-cosheaf Verdier duality equivalence theorem for overconvergent sheaves on separated, paracompact spaces, analogous to Jacob Lurie's treatment of Verdier duality for locally compact, Hausdorff topological spaces.
\end{abstract}

\maketitle

\tableofcontents

\section{Introduction}
\paragraph{\textbf{Broad Overview}}

Poincar\'{e} duality is a classical result concerning the cohomology of manifolds, and is among a large number of duality theorems in the literature for various types of cohomology (such as Serre duality), which are often formulated in terms of the existence of a dualizing sheaf for a space. Heuristically, letting \(M\) be a manifold or similar such space, we would like the dualizing sheaf to be a sheaf \(\omega_M\) on \(M\) so that we have a natural identification roughly of form

\[\mathrm{Maps}(\Gamma_c(\underline{\mathbb{R}}),\mathbb{R}) \cong \mathrm{Maps}(\underline{\mathbb{R}},\omega_M),\]

where \(\underline{\mathbb{R}}\) denotes the constant sheaf, and \(\Gamma_c\) denotes (derived) compactly supported global sections.
Verdier duality is traditionally understood to be a vast generalization of Poincar\'{e} duality from the setting of manifolds to that of arbitrary (maps of) locally compact, Hausdorff topological spaces. The basic idea is to think of the existence of dualizing sheaves in terms of the existence of a right adjoint (often called the \itshape shriek-pullback \upshape or \itshape exceptional inverse image \upshape) to a variant of the traditional direct image of sheaves (when the target is a point, this is just taking global sections), namely the shriek (!)-pushforward or direct image with proper support (generalizing taking compactly supported global sections). In the above picture, \(\omega_M\) is the shriek-pullback of the constant sheaf \(\underline{\mathbb{R}}\) on a point. Among the perspectives on Verdier duality, one specifically related to this paper is that given by demonstrating the existence of an equivalence between appropriate categories of sheaves and their dual counterpart \itshape cosheaves, \upshape given by passage to compactly supported sections. The relevance of this picture is that, just as direct images of sheaves have left adjoints, direct images of cosheaves readily have right adjoints; under a sheaf-cosheaf equivalence theorem, we can think of the cosheaf direct image and its right adjoint as corresponding to the shriek pushforward and pullback of sheaves respectively.

\par
The traditional setting for Verdier duality deals with derived categories of sheaves and formulates the result in terms of derived direct images (with proper support) and existence of an appropriate derived right adjoint. With the modern language of \(\infty\)-categories, Jacob Lurie formulates a much more general result, replacing derived categories of sheaves with sheaves valued in a very general class of stable \(\infty\)-categories (those with small limits and colimits), which, for instance, could include derived \(\infty\)-categories such as that of an abelian category. (Note to the reader new to these issues: there is a subtlety that Lurie works with a generalization of sheaves valued in an appropriate derived \(\infty\)-category, which is not obviously the same as a derived \(\infty\)-category of sheaves: without suitable boundedness assumptions, derived \(\infty\)-categories of sheaves correspond to \itshape hypercomplete \upshape sheaves valued in appropriate derived \(\infty\)-categories.)
Jacob Lurie's result is replicated here as follows:

\begin{thm}
Let \(X\) be a locally compact, Hausdorff topological space, and let \(\mathcal{C}\) be a stable \(\infty\)-category with all small limits and colimits. For \(U \subset X\) open and \(\mathcal{F}\) a sheaf on \(X\), denote by \(\Gamma_c(U,\mathcal{F})\) the sections with compact support of \(\mathcal{F}\) over \(U\). Then, the functor 

\[\Gamma_c(-): \mathrm{Shv}(X,\mathcal{C})\rightarrow \mathrm{CShv}(X,\mathcal{C})\]

given by sending a sheaf \(\mathcal{F}\) to the cosheaf \(\mathcal{G}\) given by \((U \subset X) \mapsto \Gamma_c(U,\mathcal{F})\) is an equivalence of \(\infty\)-categories. 
\end{thm}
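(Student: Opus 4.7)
The strategy is to follow Lurie's approach in \emph{Higher Algebra}, reducing both sides to a common description via functors on the poset $\mathcal{K}(X)$ of compact subsets of $X$, equipped with appropriate descent and continuity conditions. The key structural features of locally compact Hausdorff spaces --- that every open subset is the filtered union of interiors of compact sets contained in it, and every compact set admits a fundamental system of compact neighborhoods --- are what make this reduction feasible.

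First I would introduce the notion of a $\mathcal{K}$-sheaf: a functor $\mathcal{F}\colon \mathcal{K}(X) \to \mathcal{C}$ satisfying (i) $\mathcal{F}(\emptyset)$ is terminal, (ii) a Mayer--Vietoris pullback condition relating $\mathcal{F}(K \cup K')$ to $\mathcal{F}(K)$, $\mathcal{F}(K')$, and $\mathcal{F}(K \cap K')$, and (iii) a continuity condition expressing $\mathcal{F}(K)$ as a filtered colimit over compact $K'$ having $K$ in their interior. The assignment $\mathcal{F} \mapsto \bigl(K \mapsto \colim_{U \supset K \text{ open}} \mathcal{F}(U)\bigr)$ should induce an equivalence $\mathrm{Shv}(X, \mathcal{C}) \simeq \mathcal{K}\text{-}\mathrm{Shv}(X, \mathcal{C})$. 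A dual construction (swapping pullbacks for pushouts, and limits for colimits, where appropriate) gives $\mathrm{CShv}(X, \mathcal{C}) \simeq \mathcal{K}\text{-}\mathrm{CShv}(X, \mathcal{C})$. Next I would translate $\Gamma_c$ into this $\mathcal{K}$-theoretic language using the formula $\Gamma_c(U, \mathcal{F}) = \colim_{K \subset U} \mathrm{fib}\bigl(\mathcal{F}(U) \to \mathcal{F}(U \setminus K)\bigr)$; the stability of $\mathcal{C}$ allows one to convert Cartesian descent squares into coCartesian ones via this fiber construction, and after unwinding the correspondence, $\Gamma_c$ should become essentially the identity functor between $\mathcal{K}$-sheaves and $\mathcal{K}$-cosheaves on $\mathcal{K}(X)$.

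The main obstacle will be establishing the $\mathcal{K}$-theoretic description of sheaves (and the dual statement for cosheaves). This involves a delicate interplay between open covers and compact exhaustions and leans heavily on local compactness and paracompactness to guarantee that the various colimits compute what one expects --- in particular, that values on open sets can be recovered as limits of values on compacts, and vice versa. A secondary technical point is that stability of $\mathcal{C}$ must be invoked to pass between the sheaf-type and cosheaf-type descent conditions through the $\mathrm{fib}$ construction appearing in $\Gamma_c$; without stability there is no reason to expect sheaves and cosheaves to agree in this way. Once these foundations are in place, the desired equivalence should follow essentially formally from the identification of $\Gamma_c$ as the identity on $\mathcal{K}$-theoretic data, together with an excision argument verifying that the axioms match up.
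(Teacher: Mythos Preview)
The paper does not prove this theorem: it is quoted in the introduction as Lurie's result and is later invoked as a black box in Section~4.3, where the general rigid-analytic case is deduced by appealing to Lurie's theorem for the Berkovich space $\mathrm{M}(X)$. So there is no ``paper's own proof'' to compare against directly.

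That said, your outline is broadly in the spirit of Lurie's argument, but it elides the mechanism that carries the weight. Lurie does not show that $\Gamma_c$ becomes ``essentially the identity'' on $\mathcal{K}$-theoretic data; a $\mathcal{K}$-sheaf and a $\mathcal{K}$-cosheaf are genuinely different structures, and the passage between them is the content of the theorem, not a formality. What Lurie actually does---and what this paper replicates verbatim in Section~4.2 for the quasi-compact rigid-analytic case---is introduce an auxiliary poset $\mathrm{M}$ of pairs $(i,S)$ with $i\in\{0,1,2\}$ (compact $S$ for $i=0$, arbitrary for $i=1$, closed-with-open-complement for $i=2$) and isolate a full subcategory $\mathcal{E}\subset\mathrm{Fun}(\mathrm{N}(\mathrm{M}),\mathcal{C})$ characterized by two a~priori different conditions: one says the restriction to index~$2$ is a $\mathcal{K}$-sheaf and $\mathrm{F}$ is a right Kan extension from indices $\{1,2\}$; the other says the restriction to index~$0$ is a $\mathcal{K}$-cosheaf and $\mathrm{F}$ is a left Kan extension from indices $\{0,1\}$. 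Proving these two descriptions agree is the substantive step, and it goes through a cofinality argument together with the fiber sequences $\Gamma_K(X,\mathcal{F})\to\mathcal{F}(X)\to\mathcal{F}(X\setminus K)$ that you mention. Your sketch jumps straight to ``the axioms match up,'' but that matching is precisely where the three-level poset and the Kan-extension gymnastics are needed; without them you have not explained why the Mayer--Vietoris pullback condition on the sheaf side transmutes into the pushout condition on the cosheaf side in a way compatible with the continuity conditions.
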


There are various discussions of duality theorems in rigid analytic (more generally, nonarchimedean analytic) geometry, analogous to the Poincar\'{e} or Serre duality theorems for manifolds in the literature, and Huber even has developed a certain type of Verdier duality in the setting of adic spaces. What distinguishes the present paper is the focus on finding a version of a sheaf-cosheaf equivalence of categories out of which the appropriate duality theory falls out, an approach which allows us to consider highly general coefficient types (again, sheaves valued in a stable \(\infty\)-category with small limits and colimits). To closely replicate Lurie's result in the rigid analytic setting, the author was led to restrict to the class of overconvergent sheaves specifically.
\par
Overconvergent sheaves arise naturally for consideration in nonarchimedean analytic geometry, due to the fact that the usual sheaves are not determined by their behavior on rigid analytic analogue of open subsets in manifold theory (wide open subsets), and their behavior on the analogue of compact subsets is essential. It is known that overconvergence can aid in the proving of duality theorems, for instance as in Elmar Grosse-Kl\"{o}nne's \itshape Rigid Analytic Spaces with Overconvergent Structure Sheaf, \upshape where Serre and Poincar\'{e} duality theorems are proved for quasi-compact spaces where they would usually be proved for partially proper ones.
\par
With this background, we now state our main result (an analogue of Lurie's above one):

\begin{thm}
Let \(X\) be a separated, paracompact rigid analytic space, and let \(\mathcal{C}\) be a stable \(\infty\)-category with small limits and colimits. Then, there is a functor

\[\Gamma_c(-):\mathrm{OverShv}(X,\mathcal{C}) \rightarrow \mathrm{OverCShv}(X,\mathcal{C})\]

defined by sending an overconvergent sheaf \(\mathcal{F}\) to the cosheaf given on admissible open \(U\) by \((U \mapsto \Gamma_c(U,\mathcal{F}))\), and it is an equivalence of \(\infty\)-categories.

\end{thm}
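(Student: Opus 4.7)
The plan is to follow Lurie's strategy for locally compact Hausdorff topological spaces and translate each step to the rigid analytic overconvergent setting. The key technical move is to replace admissible opens by a category of quasi-compact analytic subdomains equipped with the relation ``\(K\) overconvergently contained in \(K'\),'' playing the role of the category of compact subsets with strict inclusions used in Lurie's \(K\)-sheaf reformulation. My first step would be to set up this site and prove that evaluation on quasi-compact subdomains induces an equivalence \(\mathrm{OverShv}(X,\cC) \simeq \mathrm{Shv}^K(X,\cC)\), and similarly for overconvergent cosheaves, where the right hand sides consist of functors on quasi-compact subdomains satisfying appropriate Mayer-Vietoris (and excision) axioms. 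The point is that overconvergence is exactly the condition allowing a sheaf on admissible opens to be reconstructed from its restriction to quasi-compact subdomains, so this reformulation is lossless.

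Given this reformulation, the compactly supported sections functor can be described as
\[
\Gamma_c(U,\cF) \;\simeq\; \colim_{K} \cF(K),
\]
where \(K\) ranges over quasi-compact analytic subdomains overconvergently contained in \(U\); verifying that this defines an overconvergent cosheaf reduces to a Mayer-Vietoris check on quasi-compact subdomains, analogous to the topological one. The candidate inverse functor would send a cosheaf \(\cG\) to an overconvergent sheaf via a dual limit formula built from cofibers of restriction maps on complements of quasi-compact subdomains, tracking Lurie's construction and making essential use of the stable structure of \(\cC\). The equivalence would then be established by checking directly on \(K\)-(co)sheaves that the unit and counit of the resulting adjunction are equivalences.

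The main obstacle I anticipate is verifying these round-trip identities. Lurie's topological argument leans on local compactness to produce compact neighborhoods and on paracompactness to reduce global questions to finite manipulations via shrinking coverings. In the present setting, separatedness of \(X\) ensures that intersections of affinoid subdomains remain affinoid, giving the clean form of Mayer-Vietoris needed, while paracompactness supplies admissible coverings with locally finite refinements playing the role of partitions of unity. Most importantly, restricting to overconvergent (co)sheaves on both sides is the rigid analytic substitute for local compactness: only in this variant does passage to quasi-compact subdomains faithfully capture the sheaf, and only here does the colimit formula above correctly compute compactly supported sections. Once these rigid-analytic analogues of the topological tools are in place, Lurie's abstract formal argument should run through with appropriate modifications.
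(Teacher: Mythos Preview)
Your setup matches the paper's: the reformulation via $\mathcal{K}$-sheaves on quasi-compact admissible opens, together with the overconvergent containment relation $K \subset\subset K'$, is exactly what the paper develops in Sections 2 and 3, and your observation that overconvergence is precisely what makes this reformulation lossless is the content of Corollary~3.8.

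Where your strategy diverges is in the endgame. You propose to construct an explicit inverse and verify unit/counit directly. The paper instead follows Lurie's device of the auxiliary three-index poset $\mathrm{M}$ of pairs $(i,S)$ with $i\in\{0,1,2\}$, and proves (Proposition~4.3) that two sets of conditions on a functor $\mathrm{N}(\mathrm{M})\to\mathcal{C}$ are equivalent---one encoding ``restricts to an overconvergent $\mathcal{K}$-sheaf at index~2,'' the other ``restricts to an overconvergent $\mathcal{K}$-cosheaf at index~0.'' The symmetry $(i,S)\mapsto(2-i,X\setminus S)$ halves the work. This is Lurie's own mechanism, so if you are truly tracking his argument you would arrive here rather than at a bare unit/counit check.

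More substantively, you do not mention the Berkovich topological space $\mathrm{M}(X)$, and the paper leans on it heavily. Even in the quasi-compact case, the verification that the resulting $\mathcal{K}$-cosheaf is overconvergent (the map of fiber sequences in the proof of Proposition~4.3) uses that the $\mathrm{M}(K')$ with $K\subset\subset K'$ form a fundamental system of compact neighborhoods of $\mathrm{M}(K)$. For general (non-quasi-compact) $X$, the paper explicitly abandons the direct approach: the author notes that one lacks a convenient analogue of the wide opens $X\setminus K$ generating the topology, and instead proves (Section~4.3) the result by transporting everything to $\mathrm{M}(X)$ via the equivalence $\mathrm{OverShv}(X,\mathcal{C})\simeq\mathrm{Shv}(\mathrm{M}(X),\mathcal{C})$ of Proposition~3.7 and invoking Lurie's topological theorem there. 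Your plan to run Lurie's argument intrinsically on $X$ using paracompactness and separatedness alone may be feasible, but the paper's author found it necessary to pass through $\mathrm{M}(X)$; you should expect the non-quasi-compact case to require a genuinely new idea if you avoid that route.
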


\paragraph{\textbf{Detailed Discussion of Contents of the Paper}}

We now provide a detailed overview of the contents of the various sections of the paper. 
\par
The goal of the first mathematical section 2 is to provide some foundation for later results by demonstrating the relation (equivalence) between sheaves (analogous remarks apply for cosheaves) on rigid analytic spaces and a more restricted analogue of sheaves called \(\mathcal{K}\)-sheaves. We define the notion of a \(\mathcal{K}\)-sheaf on a rigid analytic space \(X\), which is a functor defined only on quasi-compact admissible opens, and which satisfies descent for 2-element coverings. The significance of \(\mathcal{K}\)-sheaves for us is basically that, since the main functor we are interested in is one given by passing to quasi-compact supports, it will be useful to have a description of sheaves and cosheaves entirely depending on their behavior on quasi-compact admissible opens. Denoting by \(\mathrm{Shv}_{\mathcal{K}}(X,\mathcal{C})\) the \(\infty\)-category of \(\mathcal{K}\)-sheaves, the main result of the section is Proposition 2.7:

\begin{prop}
Restriction induces an equivalence of \(\infty\)-categories

\[\mathrm{Shv}(X,\mathcal{C})\rightarrow \mathrm{Shv}_{\mathcal{K}}(X,\mathcal{C}).\]
\end{prop}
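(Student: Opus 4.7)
The plan is to construct an explicit inverse to the restriction functor via right Kan extension and verify that the resulting unit and counit are equivalences. Writing $\mathcal{K}(X)$ for the poset of quasi-compact admissible opens in $X$ and $\mathrm{Adm}(X)$ for the full poset of admissible opens, I would define, for a $\mathcal{K}$-sheaf $G$, an extension
\[
\widetilde{G}(U) := \lim_{V \in \mathcal{K}(X),\, V \subset U} G(V),
\]
so that $\widetilde{G}$ is the right Kan extension of $G$ along the opposite of the inclusion $\mathcal{K}(X) \hookrightarrow \mathrm{Adm}(X)$. With this setup the restriction functor $\rho$ is automatically a right adjoint of $\widetilde{(-)}$, and the content reduces to showing that (a) $\widetilde{G}$ is a genuine sheaf on $X$, and (b) the unit and counit of the adjunction are equivalences.

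For (a), the main input is that descent for arbitrary admissible coverings of $U$ reduces, by cofinality, to descent for finite admissible coverings of quasi-compact admissible opens by such opens. Starting from the 2-element descent built into the definition of a $\mathcal{K}$-sheaf, one bootstraps inductively to finite admissible coverings by quasi-compact admissible opens, and then passes to the limit using the exhaustion of $U$ by its quasi-compact admissible subopens. For (b), the unit $G \to \rho(\widetilde{G})$ is an equivalence on $V \in \mathcal{K}(X)$ because $V$ is terminal in $\{V' \in \mathcal{K}(X) : V' \subset V\}$, so the limit computes $G(V)$; the counit $\widetilde{\rho(F)} \to F$ at an admissible open $U$ computes $F(U)$ via sheafiness of $F$ applied to the admissible cover of $U$ by its quasi-compact admissible subopens, which is admissible by the paracompactness and separatedness hypotheses on $X$.

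The main obstacle I anticipate is the combinatorial subtlety of the $G$-topology in step (a): the poset of quasi-compact admissible opens inside $U$ need not itself literally form an admissible cover, so one must work with cofinal systems of finite admissible subcovers by quasi-compact opens and verify that the associated \v{C}ech diagrams compute the expected limit. This is precisely where separatedness (ensuring intersections of quasi-compact opens remain quasi-compact) and paracompactness (ensuring good admissible exhaustions of arbitrary admissible opens) of $X$ enter in an essential way. Once this cofinality input is secured, the argument is essentially formal: right Kan extension machinery combined with the 2-element descent condition that defines a $\mathcal{K}$-sheaf yields the desired equivalence of $\infty$-categories.
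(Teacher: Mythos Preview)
Your proposal is correct and follows essentially the same route as the paper: the inverse is right Kan extension, the fact that it lands in sheaves is obtained by bootstrapping the two-element descent condition to arbitrary admissible covers (the paper packages this as Lemmas~2.4 and~2.6), and the counit is an equivalence because quasi-compact admissible opens of $U$ form an admissible cover closed under finite intersection. One remark: your anticipated obstacle is not really there---the collection of all quasi-compact admissible opens contained in an admissible open $U$ \emph{is} an admissible cover, since $U$ already admits an admissible cover by affinoids (which are quasi-compact), and any collection of admissible opens containing an admissible cover is itself admissible; separatedness then gives closure under finite intersection, and paracompactness plays no direct role at this particular step.
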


Section 3 moves from sheaf theory to the more specialized realm of overconvergent sheaf theory. The main idea is to spell out the definitions and analogues of classical results on overconvergent sheaves of abelian groups in the setting of sheaves valued in stable \(\infty\)-categories. Probably the newest content in Section 3 is the definition of overconvergence of cosheaves: having defined overconvergence of sheaves, we define an overconvergent cosheaf \(\mathcal{G}\) to be a cosheaf such that \(\mathcal{G}^{op}\) is overconvergent. This is an easy definition, but it does play just the right role our Verdier duality result. Let us briefly survey the contents of the main subsections of Section 3. Subsection 3.1 essentially just defines overconvergent sheaves and cosheaves. Subsection 3.2 establishes essential results on the relation between overconvergent sheaves on a rigid analytic space and sheaves on the underlying topological space of the associated Berkovich space, via establishing a relation between overconvergent sheaves and wide open \(G\)-topology sheaves. The main result of this subsection is Proposition 3.7, whose main contents are summarized as follows:

\begin{prop}
Let \(X\) be a rigid analytic space. The restriction functor

\[\mathrm{Shv}(\mathrm{M}(X),\mathcal{C}) \rightarrow \mathrm{Shv}_{wo}(X,\mathcal{C})\]

along the full subcategory inclusion

\[\mathrm{N}(\mathcal{W}(X)) \subset \mathrm{N}(\mathrm{Opens}(\mathrm{M}(X)))\]

of wide opens of \(X\) into the poset category of opens of the Berkovich topological space \(\mathrm{M}(X)\) given by

\[W \mapsto \mathrm{M}(W)\]

is an equivalence. Further, the functor

\[\mathrm{Shv}_{wo}(X,\mathcal{C})\xrightarrow{\mathrm{LKE}} \mathrm{EShv}_{wo}(X,\mathcal{C})\xrightarrow{\mathrm{restr}} \mathrm{OverShv}_{\mathcal{K}}(X,\mathcal{C})\]

given by left Kan extension (into what we call \itshape extended \upshape wide open sheaves for obvious reasons) followed by restriction to the poset category of quasi-compact admissible opens of \(X\) induces an equivalence between the \(\infty\)-categories of wide open \(G\)-topology sheaves and that of overconvergent \(\mathcal{K}\)-sheaves. 
\end{prop}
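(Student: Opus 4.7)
The statement splits into two equivalences, which are established separately.

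\emph{First equivalence.} The strategy is to exhibit the wide opens of $X$ as a covering basis for the Berkovich space $\mathrm{M}(X)$. The assignment $W \mapsto \mathrm{M}(W)$ is an inclusion-preserving bijection between wide open subsets of $X$ and open subsets of $\mathrm{M}(X)$, which is a standard fact from Berkovich's comparison theory (and is where the paracompactness/separatedness of $X$ enters, ensuring $\mathrm{M}(X)$ is locally compact Hausdorff with a well-behaved basis of wide opens). Under this bijection, admissible covers in the wide open $G$-topology correspond precisely to ordinary open covers in $\mathrm{M}(X)$. Since the wide opens form a basis for the topology of $\mathrm{M}(X)$, restriction from $\mathrm{Shv}(\mathrm{M}(X),\mathcal{C})$ to presheaves on the basis is fully faithful with essential image exactly the wide open $G$-topology sheaves, yielding the claimed equivalence of $\infty$-categories.

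\emph{Second equivalence.} The first step is to compute the left Kan extension explicitly: since $\mathcal{W}(X) \hookrightarrow \mathrm{Adm}(X)$ is fully faithful, one has
\[\mathrm{LKE}(\mathcal{G})(U) \;=\; \colim_{W \supset U,\, W \in \mathcal{W}(X)} \mathcal{G}(W),\]
indexed by the (co)filtered poset of wide open neighborhoods of $U$, with transition maps the restrictions of $\mathcal{G}$. By fully-faithfulness of the inclusion, $\mathrm{LKE}$ is itself fully faithful into the presheaf $\infty$-category, and $\mathrm{EShv}_{wo}(X,\mathcal{C})$ is by definition its essential image. Overconvergence of $\mathrm{LKE}(\mathcal{G})|_{\text{qc adm}}$ is then immediate from the colimit formula. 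For the $\mathcal{K}$-sheaf descent axiom, given a $2$-element admissible cover $U = U_1 \cup U_2$ of a quasi-compact admissible open, one lifts it compatibly to a cofinal system of $2$-element covers $W = W_1 \cup W_2$ of wide open neighborhoods; the descent square for $\mathrm{LKE}(\mathcal{G})$ on $U$ is then the filtered colimit of the descent squares for $\mathcal{G}$ on the $W$'s, each of which is a pullback square because $\mathcal{G}$ is a wide open sheaf, and filtered colimits preserve finite limits in the stable $\infty$-category $\mathcal{C}$. For essential surjectivity and inversion, send an overconvergent $\mathcal{K}$-sheaf $\mathcal{F}$ to the wide open sheaf $\mathcal{G}(W) := \lim \mathcal{F}(U_\alpha)$ over any admissible cover of $W$ by quasi-compact admissible opens (well-defined and independent of the cover via Proposition~2.7), then verify $\mathrm{LKE}(\mathcal{G})|_{\text{qc adm}} \simeq \mathcal{F}$ using that the overconvergence hypothesis identifies $\mathcal{F}(U)$ with the filtered colimit over its wide open neighborhoods.

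\emph{Main obstacle.} The central technical point is verifying the $\mathcal{K}$-sheaf condition for $\mathrm{LKE}(\mathcal{G})$, which rests on two ingredients: the geometric input that $2$-element covers of quasi-compact admissible opens can be lifted, cofinally and compatibly, to $2$-element covers of wide open neighborhoods (using that wide opens form a neighborhood basis and that admissibility is preserved under small perturbations), and the categorical input that filtered colimits commute with finite limits in a stable $\infty$-category. The routine but nontrivial bookkeeping in the inverse construction — ensuring that $\mathcal{G}$ is actually a sheaf rather than a mere presheaf on the wide open $G$-site — is handled by transporting descent through the equivalence of Proposition~2.7.
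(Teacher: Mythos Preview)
Your treatment of the first equivalence is essentially the paper's: the wide opens form a basis of $\mathrm{M}(X)$ closed under finite intersection, and admissible wide open covers correspond to open covers, so restriction to the basis is an equivalence. (A minor point: you assert a bijection between wide opens and \emph{all} opens of $\mathrm{M}(X)$; the paper only uses that they form a basis, which is all that is needed.)

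For the second equivalence your route is genuinely different from the paper's, and it has a real gap. The paper does \emph{not} verify the $\mathcal{K}$-sheaf condition for $\mathrm{LKE}(\mathcal{G})$ directly. Instead it builds a commutative diagram comparing $\mathrm{EShv}(\mathrm{M}(X),\mathcal{C})$, $\mathrm{Shv}_{\mathcal{K}}(\mathrm{M}(X),\mathcal{C})$, and $\mathrm{Shv}(\mathrm{M}(X),\mathcal{C})$ with their rigid-analytic counterparts, and imports Lurie's topological result [HTT 7.3.4.9] for the locally compact Hausdorff space $\mathrm{M}(X)$. The $\mathcal{K}$-sheaf property on $X$ then falls out because $K\mapsto \mathrm{M}(K)$ preserves finite unions and intersections, and overconvergence comes from a cofinality argument (the $\mathrm{M}(K')$ with $K\subset\subset K'$ are cofinal among compact neighborhoods of $\mathrm{M}(K)$). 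The bulk of the paper's proof is then the equivalence of two characterizations of $\mathrm{EShv}_{wo}(X,\mathcal{C})$, one via overconvergent $\mathcal{K}$-sheaves and one via wide open sheaves.

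Your direct argument hinges on the claim that a two-element cover $K=K_1\cup K_2$ lifts to a cofinal system of two-element wide open covers. The delicate point, which you do not address, is cofinality at the \emph{intersection}: given a wide open $W'\supset K_1\cap K_2$, you must produce wide opens $W_i\supset K_i$ with $W_1\cap W_2\subset W'$. This is a separation statement about disjoint compacta $\mathrm{M}(K_1)\setminus \mathrm{M}(W')$ and $\mathrm{M}(K_2)\setminus \mathrm{M}(W')$ in the locally compact Hausdorff space $\mathrm{M}(X)$, together with the fact that wide opens form a basis; it is true, but it is exactly the geometric input your sketch defers to ``admissibility is preserved under small perturbations,'' and without it the filtered-colimit-of-pullbacks argument does not go through. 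The paper's approach sidesteps this by working on $\mathrm{M}(X)$ from the start, where the analogous statement is already packaged into Lurie's result. If you want to make your direct route rigorous, you should isolate and prove this cofinality statement explicitly; once that is done, your argument and the paper's are two presentations of the same underlying content, with yours more self-contained and the paper's more modular.
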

This straightforwardly also allows us to demonstrate that the \(\infty\)-categories of overconvergent sheaves and corresponding overconvergent \(\mathcal{K}\)-sheaves are equivalent.
\par
We now move to the main section 4, where we demonstrate that passage to quasi-compact supports induces an equivalence of \(\infty\)-categories between the overconvergent sheaf and cosheaf categories. Subsection 4.1 builds a sort of sections with quasicompact supports functor that we call the Verdier duality functor on arbitrary (not just overconvergent) sheaves, which in subsequent sections is shown to yield an equivalence with overconvergence assumptions. The subsection 4.2 provides a more direct proof of this equivalence that very closely follows Lurie's proof for the case of ambient quasi-compact \(X\). Subsection 4.3 proves the result for general \(X\), though here, we find it necessary to make more of an appeal to Verdier duality for the topological space \(\mathrm{M}(X)\). Finally, Subsection 4.4 defines the functor of direct image with quasi-compact supports and explains how, for appropriate morphisms of rigid analytic spaces, the previous subsections are enough to provide us with a right adjoint to this functor.
\par
Last, we provide an Appendix on the various properties of the Berkovich topological space \(\mathrm{M}(X)\) associated to rigid analytic \(X\). The main point is to record results about the assignment \(U \mapsto \mathrm{M}(U)\) for admissible opens \(U\subset X\), such as the fact that, for \(U\) wide open, these provide a basis for the topology on \(\mathrm{M}(X)\), and compatibilities of this assignment with respect to various operations like taking appropriate complement, unions, and intersections.

\bigskip
\paragraph{\bfseries Notations and terminology}

Throughout this paper, we extensively use the theory of \(\infty\)-categories, and the only concrete model we use is that of quasicategories (simplicial sets fulfilling a weakened version of the Kan complex condition, corresponding to the generalization from \(\infty\)-groupoids to \(\infty\)-categories). Our convention will be that all categories are \(\infty\)-categories (through the nerve construction), so the term \itshape category \upshape should be read as \(\infty\)-category, and we will feel free to refer to 1-categories without expressly passing to the nerve, though this is implied. The main reference for what we use about quasicategories is Jacob Lurie's text \itshape Higher Topos Theory. \upshape By default, \(\mathcal{C}\) will denote a stable \(\infty\)-category with all small limits and colimits. 
\par
For our conventions on rigid geometry, let \(\mathbb{F}\) be a field complete with respect to a nontrivial nonarchimedean absolute value. We by default take \(X\) to be a \itshape separated, paracompact \upshape rigid analytic space over \(\mathbb{F}\), and by default, all ambient rigid analytic spaces satisfy this assumption. The term paracompact refers to the condition (*) in the work of Schneider and van der Put \itshape Points and topologies in rigid geometry \upshape in Proposition 4 on the section General Rigid Spaces. That is, the space admits an admissible covering by open affinoids, where each open affinoid meets only finitely many others nontrivially. In similar spirit, denote by \(\mathrm{An}_{\mathbb{F}}\) the category of separated, paracompact rigid analytic spaces over \(\mathbb{F}\) with the usual maps of rigid spaces. 
\par
 We also fix some notation regarding poset categories that appear in the forthcoming discussion of sheaf theory. As a preliminary remark, we often refer interchangeably to posets and the corresponding poset \(1\)-category. For \(T\) a topological space, denote by \(\mathrm{Opens}(T)\) the poset (category) of open subsets. Denote by \(\mathcal{U}(X)\) the poset (category) of admissible opens of the rigid analytic space \(X\), and similarly, \(\mathcal{W}(X)\) for the wide open admissible open subsets (so that the inclusion into \(X\) is partially proper), and \(\mathcal{K}(X)\) for the poset of quasi-compact admissible opens of \(X\). We will in general denote the elements of a poset \(\mathrm{P}\) lying above a given one \(p \in \mathrm{P}\) by \(\mathrm{P}_{p \subset}\), or \(\mathrm{P}_{\subset p}\) to denote elements of \(\mathrm{P}\) that are \(\subset p\). There are also generalizations of this notation, where \(\mathrm{Q}\subset\mathrm{P}\) is a subposet, and \(\mathrm{Q}_{p \subset}\) denotes elements of \(\mathrm{Q}\) lying above \(p\) in \(\mathrm{P}\). We will denote the poset of quasi-compact admissible opens that a given \(K\) is inner in (see later in the text for the definition) by \(\mathcal{K}_{K \subset\subset}(X)\).
\par
We also assume general familiarity with the notations employed in Lurie's texts \itshape Higher Algebra \upshape and \itshape Higher Topos Theory, \upshape as well as those employed in the works of Schneider and van der Put in the references. 

\bigskip
\paragraph{\textbf{Acknowledgements}}
This work is extremely indebted especially to the work of Jacob Lurie on Verdier duality, and we make extensive use not just of results, but also of the ideas and presentation. Also, the work of Peter Schneider and Marius van der Put on subjects such as analytic points, the relation between the topologies on rigid analytic, adic, and Berkovich spaces, as well as the general theory of overconvergent sheaves, have heavily influenced the author's understanding. Thanks to Damien Lejay for some helpful remarks on gluing cosheaves.
Finally, the author acknowledges use (modified as needed for his purposes) of TeX code sourced (to the best of his knowledge) at https://arxiv.org/abs/1601.00859 in both versions of this paper on the arXiv, and more generally claims no originality with respect to the TeX tools used in writing it.
\section{Some Sheaf Theory}

This section collects some sheaf theory in the rigid analytic setting: particularly, the sense in which sheaf theory on the full rigid space relates to behavior on quasi-compact admissible opens.

\begin{defin}
A \itshape \(\mathcal{K}\)-sheaf \upshape on a rigid analytic space \(X\) valued in a stable \(\infty\)-category \(\mathcal{C}\) with small limits and colimits is a functor \(\mathcal{F}: \mathrm{N}(\mathcal{K}(X))^{op} \rightarrow \mathcal{C}\) satisfying the following:

(i) \(\mathcal{F}(\emptyset)\) is final. 

(ii) For every pair of quasi-compact admissible open subspaces \(K,K'\) of \(X\), the diagram

\[
\begin{tikzcd}
\mathcal{F}(K \cup K') \arrow[d] \arrow[r]  & \mathcal{F}(K) \arrow[d]  \\
\mathcal{F}(K') \arrow[r] & \mathcal{F}(K \cap K')
\end{tikzcd}
\]

is a pullback square.

Denote the full subcategory of \(\mathrm{Fun}(\mathrm{N}(\mathcal{K}(X))^{op},\mathcal{C})\) consisting of \(\mathcal{C}\)-valued \(\mathcal{K}\)-sheaves by \(\mathrm{Shv}_{\mathcal{K}}(X,\mathcal{C})\). We can define \(\mathcal{C}\)-valued \(\mathcal{K}\)-cosheaves as functors \(\mathcal{G}:\mathrm{N}(\mathcal{K}(X))\rightarrow \mathcal{C}\) so \(\mathcal{G}^{op}\) is a \(\mathcal{K}\)-sheaf, and denote the corresponding \(\infty\)-category by \(\mathrm{CShv}_{\mathcal{K}}(X,\mathcal{C})\). 
\end{defin}

\begin{rem}
We will at times refer to \(\mathcal{K}\)-(co)sheaves also in the topological space setting, for instance when considering the underlying topological space of the Berkovich space of a rigid analytic space. 
\end{rem}

\begin{rem}
The separatedness hypothesis will (among the other useful roles it is playing) ensure that, when we take complements of compact \(K\) in \(X\), we get an admissible open set. 
\end{rem}

We will have a couple of lemmas first that are analogues of results in \itshape Higher Topos Theory \upshape 7.3.4.8-9. The first shows that \(\mathcal{K}\)-(co)sheaves, which a priori just satisfy a simple (co)descent condition for 2-element covers, satisfy another kind (more closely related to the usual analytic \(G\)-topology). The second shows that, by right Kan extension, we get a sheaf from a \(\mathcal{K}\)-sheaf (that is, the right Kan extension to all admissible opens from merely quasi-compact ones is a sheaf). This second lemma's analogue for cosheaves and left Kan extensions holds by the same arguments.

\begin{lem}
Let \(X\) be a rigid analytic space, \(\mathcal{C}\) a stable \(\infty\)-category with small limits and colimits. Consider an admissible covering by admissible opens \(\mathcal{U}\) of \(X\). Denote by \(\mathcal{K}_{\mathcal{U}}(X)\) the poset of quasi-compact admissible opens of \(X\) contained in at least one element of \(\mathcal{U}\). Let \(\mathcal{F} \) a \(\mathcal{K}\)-sheaf. Then, \(\mathcal{F}\) is a right Kan extension of its restriction to \(\mathrm{N}(\mathcal{K}_{\mathcal{U}}(X))^{op}\).
\end{lem}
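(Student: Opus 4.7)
The aim is to show that for every $K \in \mathcal{K}(X)$, the canonical map $\mathcal{F}(K) \to \lim_{P^{op}} \mathcal{F}|_P$ is an equivalence, where $P := \{K' \in \mathcal{K}_{\mathcal{U}}(X) : K' \subseteq K\}$ with its inherited inclusion order; then $P^{op}$ is precisely the comma $\infty$-category indexing the right Kan extension at $K$. For $K \in \mathcal{K}_{\mathcal{U}}(X)$ this is automatic ($K$ is the maximum of $P$), so the content lies in reducing the general case to values on $\mathcal{K}_{\mathcal{U}}$ using iterated Mayer-Vietoris together with a cofinality/adjunction argument.

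First, one produces a distinguished finite sub-cover: admissibility of $\mathcal{U}$ together with quasi-compactness of $K$ yields finitely many $U_1, \ldots, U_n \in \mathcal{U}$ with $K \subseteq U_1 \cup \cdots \cup U_n$. Taking a finite affinoid cover $\{V_\ell\}$ of $K$ and refining each $V_\ell$ by an affinoid cover subordinate to $\{V_\ell \cap U_j\}_j$ yields affinoids $W_{\ell,s}$ with each $W_{\ell,s}$ contained in some $U_{j(\ell,s)}$; setting $K_j := \bigcup_{j(\ell,s)=j} W_{\ell,s}$ gives $K_j \in \mathcal{K}_{\mathcal{U}}(X)$ with $K_j \subseteq U_j$ and $K = K_1 \cup \cdots \cup K_n$. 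Separatedness of $X$ guarantees every $K_J := \bigcap_{j \in J} K_j$ for $\emptyset \neq J \subseteq \{1,\ldots,n\}$ remains in $\mathcal{K}_{\mathcal{U}}(X) \cap P$. Induction on $n$ using the 2-element pullback condition in the $\mathcal{K}$-sheaf axiom then yields
\[\mathcal{F}(K) \simeq \lim_{\emptyset \neq J \subseteq \{1,\ldots,n\}} \mathcal{F}(K_J),\]
and the same argument applied to any $K' \in P$, using $K' = \bigcup_j (K' \cap K_j)$, produces $\mathcal{F}(K') \simeq \lim_J \mathcal{F}(K' \cap K_J)$.

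Substituting the latter into the right Kan extension formula and interchanging the two limits gives $\lim_{K' \in P^{op}} \mathcal{F}(K') \simeq \lim_J \lim_{K' \in P^{op}} \mathcal{F}(K' \cap K_J)$. For each fixed $J$, the map $r_J \colon P \to P_{K_J} := \{K'' \in \mathcal{K}_{\mathcal{U}}(X) : K'' \subseteq K_J\}$, $K' \mapsto K' \cap K_J$, is right adjoint to the full inclusion $P_{K_J} \hookrightarrow P$, since for $K'' \in P_{K_J}$ one has $K'' \subseteq K'$ iff $K'' \subseteq K' \cap K_J$. Right adjoints are cofinal by the standard criterion of Lurie's HTT, so $r_J^{op}$ is initial (cofinal for limits), and the inner limit collapses to $\lim_{K'' \in P_{K_J}^{op}} \mathcal{F}(K'')$. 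Since $K_J$ is the maximum of $P_{K_J}$ and hence the initial object of $P_{K_J}^{op}$, this further reduces to $\mathcal{F}(K_J)$. Reassembling yields $\lim_{P^{op}} \mathcal{F} \simeq \lim_J \mathcal{F}(K_J) \simeq \mathcal{F}(K)$, and naturality identifies the composite with the canonical restriction map.

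The main obstacle is the construction of the refined cover in the first step: one must carefully combine admissibility of $\mathcal{U}$, quasi-compactness of $K$, and separatedness of $X$ to produce quasi-compact admissible opens $K_j$ whose iterated intersections $K_J$ all remain in $\mathcal{K}_{\mathcal{U}}(X) \cap P$. The rest is formal — iterated Mayer-Vietoris followed by an adjunction-based cofinality argument.
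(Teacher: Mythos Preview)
Your argument is correct and reaches the same conclusion, but the organisation is genuinely different from the paper's.  The paper also begins by passing to a finite quasi-compact refinement $\{K_1,\dots,K_n\}$ of $\mathcal U$ restricted to a given $K$, but then proceeds by \emph{induction on $n$}: it introduces an auxiliary ``good cover'' property, proves a transitivity statement (if $\{U'\cap U\}_{U\in\mathcal U}$ is good for each $U'\in\mathcal U'$, then $\mathcal U$ is good iff $\mathcal U'$ is), and uses this with the splitting $\mathcal U'=\mathcal U\cup\{K_1\}\cup\{V\}$, $V=K_2\cup\cdots\cup K_n$, to reduce to the two-element case.  That last case is handled by the cofinality of $\{C_1,C_2,C_1\cap C_2\}\hookrightarrow\mathcal K_{\{C_1,C_2\}}(C)$.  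Your route instead applies the $n$-fold Mayer--Vietoris cube in one shot to both $\mathcal F(K)$ and each $\mathcal F(K')$, interchanges the two limits via Fubini, and collapses the inner limit using the observation that $K'\mapsto K'\cap K_J$ is right adjoint to the inclusion $P_{K_J}\hookrightarrow P$, hence $r_J^{op}$ is initial.  Your approach is more direct and avoids the auxiliary good-cover machinery; the paper's approach is closer to the template of \textit{HTT}~7.3.4.8--9 and packages the induction more modularly.  The one place where your write-up is slightly informal is the final ``naturality identifies the composite with the canonical restriction map'': to make this airtight one should set up the bifunctor $(K',J)\mapsto\mathcal F(K'\cap K_J)$ on $(P\cup\{K\})^{op}\times\mathcal P(\{1,\dots,n\})$ and read off both identifications as restrictions of a single limit diagram, but this is routine.
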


\begin{proof}

 Note that \(\mathcal{U}\) satisfies the conclusion of the lemma if and only if, for any quasi-compact, admissible open \(K \subset X\), the cover \(\left\{K \cap U_i\right\}_{U_i \in \mathcal{U}}\) satisfies the conclusion of the lemma, replacing \(X\) by \(K\). So, we will assume for the remainder of the proof that \(X\) is quasi-compact, and that \(\mathcal{U}\) admits a finite refinement \(\mathcal{W}=\left\{K_1,...,K_n\right\}\) consisting of quasi-compact admissible opens. We want to show that, for any quasi-compact admissible open \(K \subset X\), the restriction of \(\mathcal{F}\) to \(\mathrm{N}(\mathcal{K}_{\mathcal{U}}(X) \cup \left\{K\right\})^{op}\) is a right Kan extension of the restriction to \(\mathrm{N}(\mathcal{K}_{\mathcal{U}}(X))^{op}\). We will induct on the size of the refinement \(\mathcal{W}\). The case \(n=0\) is the case where \(X\) is empty, whence we are done by \(\mathcal{F}(\emptyset)\) being final. The case of \(n=1\) is also obvious. Now, we perform induction, assuming that, when the refinement has size \(n-1\), \(\mathcal{U}\) is good, and assume the refinement now has size \(n\) and prove \(\mathcal{U}\) is good under this assumption.
\par
Note that, given a refinement of admissible cover \(\mathcal{U}'\) by some admissible cover \(\mathcal{U}\), that there is a sequence of subcategories

 \[\mathrm{N}(\mathcal{K}_{\mathcal{U}}(X)) \subset \mathrm{N}(\mathcal{K}_{\mathcal{U'}}(X)) \subset \mathrm{N}(\mathcal{K}(X)).\]

To carry out the inductive step, we notice the following fact. Suppose that, for every \(U' \in \mathcal{U}'\), \(\left\{U' \cap U\right\}_{U \in \mathcal{U}}\) is a good covering of \(U'\). Then, we note that we have \(\mathcal{U}'\) is good if and only if \(\mathcal{U}\) is.
Now suppose that we have shown the conclusion when the refinement \(\mathcal{W}\) has \(n-1\) elements. Consider the admissible open \(V=K_2 \cup \cdots \cup K_n\). We put \(\mathcal{U}':=\mathcal{U} \cup \left\{K_1\right\} \cup \left\{V\right\}\). This is an admissible covering with admissible refinement \(\mathcal{U}\). We wish to apply the fact to \(\mathcal{U}'\), so we proceed to show that, for any \(U' \in \mathcal{U}'\), \(\left\{U' \cap U\right\}_{U \in \mathcal{U}}\) is a good covering of \(U'\). This is easy for elements of \(\mathcal{U}\) and for \(K_1\). For \(V\), notice by the inductive hypothesis that, since \(\left\{W \cap V\right\}_{W \in \mathcal{U}}\) has an admissible refinement consisting of \(n-1\) quasi-compact admissible subsets, that this is a good cover of \(V\). So, we have reduced to demonstrating the lemma for \(\mathcal{U}'\) instead of \(\mathcal{U}\). We will now apply the above fact to further reduce to proving the lemma, replacing \(\mathcal{U}'\) by the cover \(\left\{K_1,V\right\}\). To do this, we must demonstrate that, for any \(U' \in \mathcal{U}'\), \(\left\{K_1 \cap U',V \cap U'\right\}\) is a good cover of \(U'\). To do this, we must demonstrate that \(\mathcal{F}|\mathrm{N}(\mathcal{K}(U'))^{op}\) is a right Kan extension of its restriction to \(\mathrm{N}(\mathcal{K}_{\left\{K_1\cap U',V\cap U'\right\}}(U'))^{op}\). To do this, consider some specific quasi-compact, admissible \(C \subset U'\). Put \(C_1:=K_1 \cap C,C_2:=V \cap C\). We must demonstrate that \(\mathcal{F}|\mathrm{N}(\mathcal{K}_{\left\{C_1,C_2\right\}}(C)\cup\left\{C\right\})^{op}\) is a right Kan extension of \(\mathcal{F}|\mathrm{N}(\mathcal{K}_{\left\{C_1,C_2\right\}}(C))^{op}\). This now follows from the fact that the inclusion

\[\mathrm{N}(C_1,C_2,C_1 \cap C_2) \subset \mathrm{N}(\mathcal{K}_{\left\{C_1,C_2\right\}}(C))\]
is cofinal. 
\par
In fact, we have completed the main proof: having reduced to the case where \(\mathcal{U}\) consists of two admissible open quasi-compact subspaces of quasi-compact \(X\), as all such covers are good.

\end{proof}

\begin{rem}
We note that the main significance of good coverings being \itshape admissible \upshape above is seen in the fact that we could reduce to the case of \(\mathcal{U}\) being finite (of size 2, in particular). Indeed, it is unsurprising that a rigid sheaf would be determined by its behavior on affinoids. However, since \(\mathcal{K}\)-sheaves only exhibit a limited gluing property with respect to covers with two compact subspaces, some further argument (such as the above lemma) is crucial. A concise summary of the above lemma is that any \itshape admissible \upshape covering is good. 
\end{rem}

Let us now continue on to the second required lemma. For a collection of admissible opens \(\mathcal{W}\) of \(X\), we denote by by \(\mathcal{K}_{\mathcal{W}}(X)\) the poset of quasi-compact admissible opens of \(X\) contained in at least one element of \(\mathcal{W}\).

\begin{lem}
Let \(X\) be a rigid analytic space, and \(\mathcal{C}\) be a stable \(\infty\)-category with all small limits and colimits. Let \(\mathcal{G}\) be a \(\mathcal{K}\)-sheaf on \(X\), and let \(\mathcal{F}: \mathrm{N}(\mathcal{U}(X))^{op} \rightarrow \mathcal{C}\) be a functor derived from \(\mathcal{G}\) by right Kan extension. Then, \(\mathcal{F}\) is a sheaf on the rigid space \(X\).
\end{lem}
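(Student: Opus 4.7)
The plan is to verify the $\infty$-categorical sheaf condition on $\mathcal{F}$ directly, by combining the explicit right Kan extension formula with the previously established Lemma 2.5. The argument closely mirrors Lurie's proof of HTT 7.3.4.9 in the topological setting.

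Unpacking the right Kan extension along the inclusion $\mathrm{N}(\mathcal{K}(X))^{op} \hookrightarrow \mathrm{N}(\mathcal{U}(X))^{op}$ gives, for each admissible open $U$,
\[\mathcal{F}(U) \simeq \lim_{K \in \mathrm{N}(\mathcal{K}(U))^{op}} \mathcal{G}(K).\]
The case $U = \emptyset$ reduces to $\mathcal{G}(\emptyset)$, which is final by the $\mathcal{K}$-sheaf axioms, so the empty-cover axiom is automatic. It remains to check descent for each admissible cover.

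Fix an admissible open $U$ and an admissible cover $\mathcal{U} = \{U_\alpha\}$ of $U$. Writing $\mathcal{S}_\mathcal{U}(U) \subset \mathcal{U}(U)$ for the associated covering sieve (admissible $V \subset U$ contained in some $U_\alpha$), the sheaf condition reads
\[\mathcal{F}(U) \simeq \lim_{V \in \mathrm{N}(\mathcal{S}_\mathcal{U}(U))^{op}} \mathcal{F}(V).\]
Substituting the Kan extension formula into each $\mathcal{F}(V)$ and applying Fubini for limits, the right-hand side rewrites as the limit of $\mathcal{G}(K)$ indexed by the poset $\mathcal{D}$ of pairs $(V, K)$ with $V \in \mathcal{S}_\mathcal{U}(U)$ and $K \in \mathcal{K}(V)$. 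The forgetful projection $(V, K) \mapsto K$ sends $\mathcal{D}$ onto $\mathcal{K}_\mathcal{U}(U)$. For each fixed $K$, the fiber consists of $V \in \mathcal{S}_\mathcal{U}(U)$ with $K \subset V$; it is nonempty (containing the refining $U_\alpha \supset K$) and closed under pairwise admissible intersection, so it is cofiltered and hence has contractible nerve. A standard cofinality argument (Quillen's Theorem~A applied to the appropriate opposite fibration with cofiltered fibers) then collapses the double limit to
\[\lim_{K \in \mathrm{N}(\mathcal{K}_\mathcal{U}(U))^{op}} \mathcal{G}(K).\]

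To close, I would invoke Lemma 2.5 for the admissible space $U$ with admissible cover $\mathcal{U}$: the $\mathcal{K}$-sheaf $\mathcal{G}|_{\mathrm{N}(\mathcal{K}(U))^{op}}$ is a right Kan extension of $\mathcal{G}|_{\mathrm{N}(\mathcal{K}_\mathcal{U}(U))^{op}}$, so
\[\mathcal{F}(U) \simeq \lim_{K \in \mathrm{N}(\mathcal{K}(U))^{op}} \mathcal{G}(K) \simeq \lim_{K \in \mathrm{N}(\mathcal{K}_\mathcal{U}(U))^{op}} \mathcal{G}(K),\]
matching the previous display. The step I expect to be the main obstacle is the cofinality reduction: admissibility is a genuine constraint (arbitrary unions of admissible opens need not be admissible), so the argument that the fiber posets are cofiltered and remain inside $\mathcal{S}_\mathcal{U}(U)$ after intersection must lean on separatedness of $X$ and the closure properties of the admissible $G$-topology. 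Once this is secured, the two cofinality reductions together with Lemma 2.5 deliver the equivalence.
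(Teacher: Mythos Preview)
Your proof is correct and follows essentially the same strategy as the paper: both reduce the sieve limit to a limit over $\mathcal{K}_{\mathcal{U}}(U)$ via a cofinality argument, then invoke the previous lemma (numbered 2.4 in the paper, not 2.5) to identify this with the limit over all of $\mathcal{K}(U)$. The paper organizes the cofinality step a bit more cleanly by working directly inside the poset $\mathcal{K}_{\mathcal{W}}(X) \cup \mathcal{W}$ and appealing to transitivity of right Kan extensions (HTT 4.3.2.8), rather than introducing your auxiliary category $\mathcal{D}$ of pairs and a Fubini argument; note also that your projection $\pi:\mathcal{D}\to\mathcal{K}_{\mathcal{U}}(U)$ is Cartesian rather than coCartesian, so the ``contractible fibers'' shortcut needs the comma-category form of Quillen~A (each $K_0\!\downarrow\!\pi$ deformation-retracts onto the cofiltered subposet $\{(V,K_0):K_0\subset V\}$), not just the fiberwise statement.
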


\begin{proof} Let \(\mathcal{W}\) be a covering sieve of \(U \subset X\). We must demonstrate that \(\mathcal{F}|\mathrm{N}(\mathcal{W} \cup \left\{U\right\})^{op}\) is a right Kan extension of the restriction to \(\mathrm{N}(\mathcal{W})^{op}\). Note that \(\mathrm{N}(\mathcal{W}) \subset \mathrm{N}(\mathcal{K}_{\mathcal{W}}(X) \cup \mathcal{W})\) is cofinal, so it will suffice to demonstrate that the restriction of \(\mathcal{F}\) to \(\mathrm{N}(\mathcal{K}_{\mathcal{W}}(X) \cup \mathcal{W} \cup \left\{U\right\})^{op}\) is a right Kan extension of the corresponding restriction to \(\mathrm{N}(\mathcal{K}_{\mathcal{W}}(X) \cup \mathcal{W})^{op}\). We now consider the chain of subcategories

\[\mathrm{N}(\mathcal{K}_{\mathcal{W}}(X)) \subset \mathrm{N}(\mathcal{K}_{\mathcal{W}}(X) \cup \mathcal{W})  \subset \mathrm{N}(\mathcal{K}_{\mathcal{W}}(X) \cup \mathcal{W} \cup \left\{U\right\})\]

and note that, by [HTT] 4.3.2.8, it suffices to show the restriction of \(\mathcal{F}\) to \(\mathrm{N}(\mathcal{K}_{\mathcal{W}}(X) \cup \mathcal{W} \cup \left\{U\right\})^{op}\) is a right Kan extension of the restriction to \(\mathrm{N}(\mathcal{K}_{\mathcal{W}}(X))^{op}\).
\par
This is clear at every element except \(U\). Let us now consider \(\mathcal{K}_{\subset U}(X)\), the poset of quasi-compact admissible opens of \(X\) contained in \(U\). We now consider the chain of subcategories

\[\mathrm{N}(\mathcal{K}_{\mathcal{W}}(X))^{op} \subset \mathrm{N}(\mathcal{K}_{\subset U}(X))^{op} \subset \mathrm{N}(\mathcal{K}_{\subset U}(X) \cup \left\{U\right\})^{op}\]

Our desired result follows if we can demonstrate that the restriction of \(\mathcal{F}\) to the right is a right Kan extension of the restriction to the left. By [HTT] 4.3.2.8, if we can demonstrate the restriction to the center is a right Kan extension of the restriction to the left, and likewise for the right and center, we are done. The left to center case follows from the previous lemma, and the center to right case is a consequence of the very definition of \(\mathcal{F}\) by right Kan extension from \(\mathcal{G}\). 

\end{proof}

\begin{prop}
The restriction \(\mathrm{Shv}(X,\mathcal{C})\rightarrow \mathrm{Shv}_{\mathcal{K}}(X,\mathcal{C})\) is an equivalence.
\end{prop}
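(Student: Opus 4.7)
The plan is to construct an inverse to the restriction functor by right Kan extension along the fully faithful inclusion $\mathrm{N}(\mathcal{K}(X))^{op} \hookrightarrow \mathrm{N}(\mathcal{U}(X))^{op}$. The restriction of any sheaf is automatically a $\mathcal{K}$-sheaf, since for $K,K' \in \mathcal{K}(X)$ the two-element cover $\{K,K'\}$ of $K \cup K'$ is admissible and the $\mathcal{K}$-sheaf pullback square is a special case of the usual descent square. In the other direction, Lemma 2.6 shows that right Kan extension of a $\mathcal{K}$-sheaf lands in sheaves, yielding a candidate inverse
\[
\mathrm{RKE}\colon \mathrm{Shv}_{\mathcal{K}}(X,\mathcal{C}) \longrightarrow \mathrm{Shv}(X,\mathcal{C}).
\]
One of the two required natural equivalences, namely $\mathrm{restr} \circ \mathrm{RKE} \simeq \mathrm{id}$ on $\mathrm{Shv}_{\mathcal{K}}$, is essentially immediate from the standard fact that restricting a right Kan extension along a fully faithful functor recovers the original functor, applied via the machinery of HTT 4.3.2 already invoked in the proofs of Lemmas 2.4 and 2.6.

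For the other equivalence, I would show that for any sheaf $\mathcal{F}$ the unit map $\mathcal{F} \to \mathrm{RKE}(\mathcal{F}|_{\mathrm{N}(\mathcal{K}(X))^{op}})$ is an equivalence. Both source and target are sheaves on $X$ and agree on $\mathcal{K}(X)$ by construction, so it suffices to check the comparison at an arbitrary admissible open $U \subseteq X$. Here I would exploit the standing hypotheses: paracompactness produces an admissible cover $\{U_i\}$ of $U$ by quasi-compact admissible opens (obtained by intersecting a locally finite admissible affinoid cover of $X$ with $U$), and separatedness guarantees that every finite intersection $U_{i_0} \cap \cdots \cap U_{i_n}$ is again quasi-compact admissible. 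Applying sheaf descent on both sides with respect to this cover then identifies each value at $U$ with the limit of the corresponding Čech diagram, whose entries all lie in $\mathcal{K}(X)$ and on which the two sheaves already agree termwise.

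The main obstacle I anticipate is the rigid-geometric step rather than the categorical one: confirming that the paracompactness condition from the introduction really does yield, for \emph{every} admissible open $U \subseteq X$, an admissible covering of $U$ by quasi-compact admissible opens with quasi-compact admissible intersections. Modulo this, the argument is essentially Lurie's proof of HTT 7.3.4.9 transported to the $G$-topology, leaning on Lemma 2.4 to control how the $\mathcal{K}$-sheaf axiom interacts with admissible covers and on Lemma 2.6 to guarantee that the right Kan extension satisfies descent against \emph{all} admissible covering sieves, not merely two-element ones. Once the rigid-geometric covering statement is secured, piecing these lemmas together at the level of the unit transformation completes the proof.
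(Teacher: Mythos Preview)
Your proposal is correct and follows the same overall strategy as the paper: restriction and right Kan extension are mutually inverse, with Lemma~2.6 supplying one direction and the other direction amounting to showing that every sheaf $\mathcal{F}$ is already a right Kan extension of its restriction to $\mathrm{N}(\mathcal{K}(X))^{op}$.

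The only substantive difference is in this last step. You propose choosing, for each admissible $U$, a specific admissible cover by quasi-compacts coming from paracompactness, and then comparing both sides via the resulting \v{C}ech diagram; you correctly flag the rigid-geometric verification of such covers as the main obstacle. The paper sidesteps this entirely: it observes that the \emph{entire} poset $\mathcal{K}_{\subset U}(X)$ of quasi-compact admissible opens contained in $U$ is itself an admissible cover of $U$ (since $U$ has an admissible affinoid cover) and is closed under finite intersection (by separatedness). The sheaf condition for this cover, together with cofinality of $\mathcal{K}_{\subset U}(X)$ inside the sieve it generates, directly identifies $\mathcal{F}(U)$ with $\lim_{K \in \mathcal{K}_{\subset U}(X)} \mathcal{F}(K)$, which is precisely the right Kan extension value. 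So no appeal to paracompactness or to a particular \v{C}ech cover is needed, and your anticipated obstacle simply does not arise.
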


\begin{proof}
This can be done by identifying \(\mathrm{Shv}(X,\mathcal{C})\subset \mathrm{Fun}(\mathrm{N}(\mathcal{U}(X))^{op},\mathcal{C})\) with the full subcategory of functors which are a right Kan extension of their restriction to \(\mathrm{N}(\mathcal{K}(X))^{op}\), and so this restriction is a \(\mathcal{K}\)-sheaf. Any sheaf of course restricts to a \(\mathcal{K}\)-sheaf. We have also demonstrated that right Kan extension does send \(\mathcal{K}\)-sheaves to sheaves. So, to finish, we just note that any sheaf is a right Kan extension of its restriction to \(\mathrm{N}(\mathcal{K}(X))^{op}\). This follows, as for any \(U\) admissible open of \(X\), the collection of quasi-compact admissible opens contained in it is an admissible cover closed under finite intersection. 
\end{proof}

We now need another lemma that we collect for future use, to define the quasicompact supports functor that yields the Verdier duality equivalence of categories.

\begin{lem}
Let \(X\) be a rigid analytic space, and let \(K,K' \subset X\) be quasi-compact admissible open. Then, \(\left\{X \setminus K, X \setminus K'\right\}\) is an admissible covering of \(X \setminus K \cap K'\).
\end{lem}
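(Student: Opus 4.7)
The statement decomposes naturally into (a) showing that \(X \setminus K\) and \(X \setminus K'\) are admissible open with set-theoretic union \(X \setminus (K \cap K')\), and (b) verifying that the resulting cover is admissible in the \(G\)-topology sense. Part (a) is elementary: by separatedness of \(X\) (invoking Remark 2.3 to ensure complements of quasi-compact admissible opens are admissible open) both \(X \setminus K\) and \(X \setminus K'\) lie in \(\mathcal U(X)\), and de Morgan gives \(X \setminus (K \cap K') = (X \setminus K) \cup (X \setminus K')\).

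For part (b), I would use the standard criterion for admissibility of a cover of an admissible open \(U\): it suffices to check, for every affinoid (equivalently, quasi-compact admissible open) \(L\) sitting inside \(U = X \setminus (K \cap K')\), that the pulled-back cover \(\{L \setminus K, L \setminus K'\}\) of \(L\) admits a finite affinoid refinement. Since \(L \cap (K \cap K') = \emptyset\), the intersections \(A := L \cap K\) and \(B := L \cap K'\) are \emph{disjoint} quasi-compact admissible opens of the affinoid \(L\). The key translation is to pass to the associated Berkovich space \(\mathrm{M}(L)\), which is compact Hausdorff (hence normal); here \(\mathrm M(A)\) and \(\mathrm M(B)\) are disjoint compact subsets of \(\mathrm M(L)\).

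The plan for the core step is then: use normality of \(\mathrm M(L)\) to separate \(\mathrm M(A)\) and \(\mathrm M(B)\) by disjoint open neighborhoods, take their closed complements to obtain a two-piece closed cover of \(\mathrm M(L)\) in which one member avoids \(\mathrm M(A)\) and the other avoids \(\mathrm M(B)\), and refine this through the basis of Berkovich realizations of affinoid subdomains of \(L\) (as developed in the paper's appendix on \(\mathrm M(-)\), in particular the compatibility of \(\mathrm M(-)\) with complements and intersections). Quasi-compactness of \(\mathrm M(L)\) cuts the refinement down to finitely many pieces; pulling back through \(W \mapsto \mathrm M(W)\) yields a finite affinoid cover \(\{L_1, \dots, L_m\}\) of \(L\) such that each \(L_i\) is contained in either \(L \setminus K\) or \(L \setminus K'\). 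This is exactly the required refinement.

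The main obstacle is this passage from a topological separation of compacta in \(\mathrm{M}(L)\) to an honest rigid-analytic affinoid refinement; cleanly executing it requires the comparison between admissible opens of \(L\) and their Berkovich counterparts, together with the fact that affinoid-subdomain Berkovich spaces form a suitable basis. Both ingredients are already collected in the appendix, so once those are cited, the remainder of the argument is a straightforward check using separatedness, de Morgan, and the affinoid criterion for admissibility.
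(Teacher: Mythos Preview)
Your argument is correct, but it takes a longer route than the paper's proof and also carries some unnecessary baggage internally. The paper's key observation is that \(X\setminus K\) and \(X\setminus K'\) are \emph{wide open} (since \(\mathrm{M}(X\setminus K)=\mathrm{M}(X)\setminus\mathrm{M}(K)\) is open in \(\mathrm{M}(X)\), and likewise for \(K'\)). Once this is noted, Proposition~5.10 of the Appendix applies: a collection of wide opens is an admissible cover if and only if the corresponding \(\mathrm{M}(-)\)'s cover on the Berkovich side. The entire proof then reduces to the one-line de~Morgan computation
\[
\mathrm{M}(X\setminus K)\cup\mathrm{M}(X\setminus K')=\mathrm{M}(X)\setminus\bigl(\mathrm{M}(K)\cap\mathrm{M}(K')\bigr)=\mathrm{M}(X)\setminus\mathrm{M}(K\cap K')=\mathrm{M}(X\setminus K\cap K').
\]
Your approach instead verifies the affinoid-by-affinoid admissibility criterion (Lemma~5.8) directly, which effectively reproves the relevant instance of Proposition~5.10 inside this lemma. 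That is fine, but it obscures the structural reason the cover is admissible.

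Even within your chosen route, the normality/separation step is superfluous: you already have the \emph{open} cover \(\{\mathrm{M}(L)\setminus\mathrm{M}(A),\ \mathrm{M}(L)\setminus\mathrm{M}(B)\}\) of \(\mathrm{M}(L)\) for free (compacts are closed in a Hausdorff space, and disjointness gives the covering), so you can go straight to picking affinoid neighborhoods inside these opens via Remark~5.4 and extracting a finite subcover by compactness. The detour through disjoint open neighborhoods of \(\mathrm{M}(A)\), \(\mathrm{M}(B)\) and their closed complements adds nothing.
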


\begin{proof}
The claim is that \(\left\{X\setminus K,X\setminus K'\right\}\) are an admissible covering of their union. It is clear these are both wide open, as \(\mathrm{M}(X) \setminus \mathrm{M}(K)=\mathrm{M}(X\setminus K)\) is open in \(\mathrm{M}(X)\), and similarly for \(K'\). We note that it will suffice to demonstrate that \(\left\{\mathrm{M}(X\setminus K), \mathrm{M}(X\setminus K')\right\}\) is a covering of \(\mathrm{M}(X\setminus K \cap K')\). However, from the compatibilities of \(\mathrm{M}(-)\), we see that

\[\mathrm{M}(X\setminus K) \cup \mathrm{M}(X\setminus K') \]

\[=(\mathrm{M}(X) \setminus \mathrm{M}(K)) \cup (\mathrm{M}(X) \setminus \mathrm{M}(K'))\]

\[= \mathrm{M}(X) \setminus (\mathrm{M}(K) \cap \mathrm{M}(K'))\]

\[=\mathrm{M}(X) \setminus \mathrm{M}(K \cap K')\]

\[ = \mathrm{M}(X\setminus K\cap K'),\]

as desired.

\end{proof}

\begin{defin}
Let \(X\) be a rigid analytic space (not necessarily quasi-compact). We consider \(\mathrm{N}(\mathcal{K}(X))\) as endowed with a Grothendieck topology, generated by finite coverings of quasi-compact admissible opens by other quasi-compact admissible opens of \(X\). That is, a covering sieve for an object \(K \in \mathcal{K}(X)\) is a full subcategory of \(\mathrm{N}(\mathcal{K}(X))_{/K}\) having the form \(\mathrm{N}(\mathcal{K}_{\mathcal{W}}(X))\times_{\mathrm{N}(\mathcal{K}(X))} \mathrm{N}(\mathcal{K}(X))_{/K}\) where \(\mathcal{W}\) is a covering of \(K\) by finitely many quasi-compact admissible opens. 

We call this the special \(G\)-topology, and denote the associated \(\infty\)-category of \(\mathcal{C}\)-valued sheaves by \(\mathrm{Shv}_{sp}(X,\mathcal{C})\).
\end{defin}

\begin{rem}
Our work on \(\mathcal{K}\)-sheaves actually demonstrates that the \(\infty\)-category \(\mathrm{Shv}_{\mathcal{K}}(X,\mathcal{C})\) is naturally equivalent with \(\mathrm{Shv}_{sp}(X,\mathcal{C})\). Indeed, this requires demonstrating that, for any covering of some \(K\) by \(\left\{K_1,...,K_n\right\}:=\mathcal{W}\), \(\mathcal{F}\in \mathrm{Shv}_{\mathcal{K}}(X,\mathcal{C})\) satisfies descent for the corresponding covering sieve, namely \(\mathcal{F}|\mathrm{N}(\mathcal{K}_{\mathcal{W}}(X) \cup \left\{K\right\})^{op}\) is a right Kan extension of \(\mathcal{F}|\mathrm{N}(\mathcal{K}_{\mathcal{W}}(X) )^{op}\). But we have actually demonstrated something stronger, where \(\mathcal{W}\) could be any admissible covering.

\end{rem}

\section{Basic Theory of Overconvergent (Co)sheaves}

The goal of this section is to define the basics notions of overconvergent (co)sheaf theory, and collect analogues of classical results in the \(\infty\)-categorical setting (particularly, the relation between overconvergent sheaves on the rigid analytic site and sheaves on the associated Berkovich topological space).

\subsection{Definitions}
\begin{defin}
Let \(X\) be a rigid analytic space. We say that a sheaf \(\mathcal{F} \in \mathrm{Shv}(X,\mathcal{C})\) is \itshape overconvergent \upshape if, for any quasi-compact admissible open \(K \subset X\), \(\mathcal{F}|\mathrm{N}(\mathcal{W}(X) \cup \left\{K\right\})^{op}\) is a left Kan extension of \(\mathcal{F}|\mathrm{N}(\mathcal{W}(X))^{op}\). Similarly, define a cosheaf \(\mathcal{G} \in \mathrm{CShv}(X,\mathcal{C})\) to be \itshape overconvergent \upshape if  \(\mathcal{G}|\mathrm{N}(\mathcal{W}(X) \cup \left\{K\right\})\) is a right Kan extension of \(\mathcal{G}|\mathrm{N}(\mathcal{W}(X))\). Denote by \(\mathrm{OverShv}(X,\mathcal{C})\) the full subcategory of \(\mathrm{Shv}(X,\mathcal{C})\) of overconvergent sheaves, and by \(\mathrm{OverCShv}(X,\mathcal{C})\) the full subcategory of \(\mathrm{CShv}(X,\mathcal{C})\) consisting of overconvergent cosheaves.
\end{defin}

\begin{rem}
One way to justify referring to the above cosheaves as \itshape overconvergent \upshape is that we can note \(\mathcal{G}: \mathrm{N}(\mathcal{U}(X)) \rightarrow \mathcal{C}\) is overconvergent as a cosheaf if and only if the opposite sheaf \(\mathcal{G}^{op}: \mathrm{N}(\mathcal{U}(X))^{op} \rightarrow \mathcal{C}^{op}\) is overconvergent (here, \(\mathcal{C}^{op}\) is the opposite stable \(\infty\)-category, also with all small limits and colimits, associated to \(\mathcal{C}\)).
\end{rem}

\begin{rem}
It is clear that, if \(\mathcal{F}\) on \(X\) is an overconvergent sheaf, \(\mathcal{F}|W\) is as well, for \(W \subset X\) wide open, due to cofinality of

\[\mathrm{N}(\mathcal{W}_{K \subset}(W))^{op} \subset \mathrm{N}(\mathcal{W}_{K \subset}(X))^{op}\]

where \(K \subset W\) is quasi-compact and admissible.
\end{rem}

We now define a notion needed to define the \(\mathcal{K}\)-sheaf version of overconvergence.

\begin{defin}
We say that one quasi-compact admissible open \(K\subset K' \subset X\) is \itshape inner \upshape in another \(K'\), written \(K \subset\subset K'\) if there exists some wide open \(W \subset X\) so that \(K \subset W \subset K'\).
\end{defin}

\begin{rem}
By general facts about the Berkovich topological space \(\mathrm{M}(X)\), this definition is equivalent to the demand that \(\mathrm{M}(K')\) is a neighborhood of \(\mathrm{M}(K)\). This is the characterization we find of the notion of \itshape inner \upshape in Schneider's \itshape Points of Rigid Analytic Varieties \upshape when the ambient \(X\) is affinoid. That is, \(\mathrm{M}(K)\) is inner in \(\mathrm{M}(K')\) in Schneider's sense if, for any \(x \in \mathrm{M}(K)\), there exists an affinoid neighborhood

\[x \in \mathrm{M}(Sp(B)) \subset \mathrm{M}(K')\]

of \(x\). If this is so, clearly \(\mathrm{M}(K')\) is a neighborhood of \(\mathrm{M}(K)\). Conversely, if \(\mathrm{M}(K')\) is a neighborhood of \(\mathrm{M}(K)\), then, letting \(x \in \mathrm{M}(K)\), since there is a fundamental system of compact neighborhoods of \(x\) arising from applying \(\mathrm{M}(-)\) to affinoid wide neighborhoods of \(x\) in \(X\), there is a neighborhood of \(x\) of form

\[x \in \mathrm{M}(Sp(B)) \subset \mathrm{M}(K') \subset \mathrm{M}(X).\]
\end{rem}

\begin{defin}
Let \(\mathcal{F}\) be a \(\mathcal{K}\)-sheaf on \(X\). We say \(\mathcal{F}\) is \itshape overconvergent \upshape if, for any quasi-compact admissible open \(K \subset X\), \(\mathcal{F}|\mathrm{N}(\mathcal{K}_{K \subset\subset}(X) \cup \left\{K\right\})^{op}\) is a left Kan extension of the restriction \(\mathcal{F}|\mathrm{N}(\mathcal{K}_{K \subset\subset}(X))^{op}\). An overconvergent \(\mathcal{K}\)-cosheaf is defined to be a \(\mathcal{K}\)-cosheaf \(\mathcal{G}\) so \(\mathcal{G}^{op}\) is an overconvergent \(\mathcal{K}\)-sheaf.  Denote by \(\mathrm{OverShv}_{\mathcal{K}}(X,\mathcal{C})\) the full subcategory of \(\mathrm{Shv}_{\mathcal{K}}(X,\mathcal{C})\) of overconvergent \(\mathcal{K}\)-sheaves, and by \(\mathrm{OverCShv}_{\mathcal{K}}(X,\mathcal{C})\) the full subcategory of \(\mathrm{CShv}_{\mathcal{K}}(X,\mathcal{C})\) consisting of overconvergent \(\mathcal{K}\)-cosheaves.
\end{defin}
We now discuss some of the motivation for the definition of an overconvergent sheaf that we have given. It is customary to define overconvergent sheaves on affinoid spaces, and then define them locally for global spaces. However, in the good situations, there is an equivalence between the notion of a global overconvergent sheaf and a sheaf for the underlying wide open \(G\)-topology, so one could morally also define overconvergent sheaves as ones arising from sheaves for wide open \(G\)-topologies. For the discussion, we recall that, for any full subcategory inclusion \(\mathcal{X}^0 \subset \mathcal{X}\), and any full subcategory \(\mathcal{B} \subset \mathrm{Fun}(\mathcal{X}^0,\mathcal{C})\), letting \(\mathcal{B}_{\mathrm{LKE}} \subset \mathrm{Fun}(\mathcal{X},\mathcal{C})\) denote the full subcategory of functors which are left Kan extensions of their restriction to \(\mathcal{X}^0\), which lies in \(\mathcal{B}\), restriction yields a trivial Kan fibration \(\mathcal{B}_{\mathrm{LKE}}\rightarrow \mathcal{B}\). An entirely analogous remark holds for right Kan extensions, namely we get a trivial Kan fibration \(\mathcal{B}_{\mathrm{RKE}}\rightarrow \mathcal{B}\). In both cases, these trivial Kan fibrations admit sections, which we denote \(\mathrm{LKE}\) and \(\mathrm{RKE}\) respectively (these in fact yield \(\infty\)-categorical left and right adjoints respectively to the restriction functors). Further, trivial Kan fibrations are closed under composition, which will come in handy when we combine the procedures of left and right Kan extension, as in the discussion of Verdier duality.
There is an inclusion \(i:\mathrm{N}(\mathcal{W}(X))^{op} \subset \mathrm{N}(\mathcal{U}(X))^{op}\), restriction along which induces a functor
\[i_*: \mathrm{PShv}(X,\mathcal{C})\rightarrow \mathrm{PShv}_{wo}(X,\mathcal{C})\]

(the latter denotes presheaves on the poset of admissible wide opens)
that preserves sheaves, and has a left adjoint (the presheaf pullback) given by left Kan extension \(i^*\). As is customary, one needs to compose \(i^*\) with sheafification \(\mathrm{L}\) to obtain the pullback of sheaves. An overconvergent sheaf, morally, should be a sheaf in the essential image of \(\mathrm{L} \circ i^*\circ j\) in \(\mathrm{Shv}(X,\mathcal{C})\), where \(j: \mathrm{Shv}_{wo}(X,\mathcal{C}) \subset \mathrm{PShv}_{wo}(X,\mathcal{C})\) is the inclusion of wide open sheaves into the corresponding presheaves. 
As we have seen earlier, precomposition along \(\mathrm{N}(\mathcal{K}(X))^{op} \subset \mathrm{N}(\mathcal{U}(X))^{op}\) induces an equivalence

\[\mathrm{Shv}(X,\mathcal{C})\rightarrow \mathrm{Shv}_{\mathcal{K}}(X,\mathcal{C}).\]

We make reference to the sheafification functor \(\mathrm{Fun}(\mathrm{N}(\mathcal{K}(X))^{op},\mathcal{C})\xrightarrow{\mathrm{L}} \mathrm{Shv}_{\mathcal{K}}(X,\mathcal{C})\), given by sheafification with respect to the earlier defined special Grothendieck topology. 
We claim the functor \(\mathrm{L} \circ i^*\) is equivalent to (that is, there exists a natural transformation that is an equivalence to) the composition (the last arrow is the obvious one: the inverse to restriction given by right Kan extension):

\[\mathrm{PShv}_{wo}(X,\mathcal{C}) \xrightarrow{i^*} \mathrm{PShv}(X,\mathcal{C}) \xrightarrow{\mathrm{restr}} \mathrm{Fun}(\mathrm{N}(\mathcal{K}(X))^{op},\mathcal{C})\xrightarrow{\mathrm{L}} \mathrm{Shv}_{\mathcal{K}}(X,\mathcal{C}) \cong \mathrm{Shv}(X,\mathcal{C})\]

which heuristically corresponds to applying \(i^*\) and then sheafifying the underlying \(\mathcal{K}\)-sheaf (note that this corresponds to sheafifying with respect to the special Grothendieck topology on quasi-compact admissibles). Indeed, this is because we have a sequence of fully faithful functors (the second category is the same as the third in the above sequence of arrows)

\[\mathrm{Shv}_{\mathcal{K}}(X,\mathcal{C}) \subset \mathrm{PShv}_{\mathcal{K}}(X,\mathcal{C}) \xrightarrow{\mathrm{RKE}} \mathrm{PShv}(X,\mathcal{C})\]

which can also be written

\[\mathrm{Shv}_{\mathcal{K}}(X,\mathcal{C})\xrightarrow{\mathrm{RKE}} \mathrm{Shv}(X,\mathcal{C}) \subset \mathrm{PShv}(X,\mathcal{C}).\]

We obtain a left adjoint to either of these compositions via composing left adjoints to the underlying functors involved in the composition.
For instance, \[\mathrm{PShv}(X,\mathcal{C}) \xrightarrow{\mathrm{L}} \mathrm{Shv}(X,\mathcal{C}) \xrightarrow{\mathrm{restr}} \mathrm{Shv}_{\mathcal{K}}(X,\mathcal{C})\]

or

\[\mathrm{PShv}(X,\mathcal{C}) \xrightarrow{\mathrm{restr}} \mathrm{PShv}_{\mathcal{K}}(X,\mathcal{C}) \xrightarrow{\mathrm{L}} \mathrm{Shv}_{\mathcal{K}}(X,\mathcal{C}).\]

Note that \(\mathrm{L}: \mathrm{PShv}(X,\mathcal{C}) \rightarrow \mathrm{Shv}(X,\mathcal{C})\) is equivalent to the same functor, after composing with \(\mathrm{Shv}(X,\mathcal{C}) \cong \mathrm{Shv}_{\mathcal{K}}(X,\mathcal{C}) \xrightarrow{\mathrm{RKE}} \mathrm{Shv}(X,\mathcal{C})\).  But this is equivalent to the composition

\[\mathrm{PShv}(X,\mathcal{C}) \xrightarrow{\mathrm{restr}} \mathrm{PShv}_{\mathcal{K}}(X,\mathcal{C}) \xrightarrow{\mathrm{L}} \mathrm{Shv}_{\mathcal{K}}(X,\mathcal{C}) \xrightarrow{\mathrm{RKE}} \mathrm{Shv}(X,\mathcal{C}).\]

The significance of this for us is that a functor in the essential image of

\[\mathrm{Shv}_{wo}(X,\mathcal{C}) \subset \mathrm{PShv}_{wo}(X,\mathcal{C}) \xrightarrow{i^*} \mathrm{PShv}(X,\mathcal{C})\xrightarrow{\mathrm{L}} \mathrm{Shv}(X,\mathcal{C})\]

is precisely one in the essential image of

\[\mathrm{Shv}_{wo}(X,\mathcal{C}) \subset \mathrm{PShv}_{wo}(X,\mathcal{C}) \xrightarrow{i^*} \mathrm{PShv}(X,\mathcal{C})\xrightarrow{\mathrm{restr}} \mathrm{PShv}_{\mathcal{K}}(X,\mathcal{C})\xrightarrow{\mathrm{L}} \mathrm{Shv}_{\mathcal{K}}(X,\mathcal{C}) \xrightarrow{\mathrm{RKE}} \mathrm{Shv}(X,\mathcal{C}).\]

Now, the main point for us is that any functor in the essential image of

\[\mathrm{Shv}_{wo}(X,\mathcal{C}) \subset \mathrm{PShv}_{wo}(X,\mathcal{C}) \xrightarrow{i^*} \mathrm{PShv}(X,\mathcal{C})\xrightarrow{\mathrm{restr}} \mathrm{PShv}_{\mathcal{K}}(X,\mathcal{C})\]

(as we will see) is already a \(\mathcal{K}\)-sheaf (which we take for granted now, though it is shown later), and an overconvergent sheaf should morally be precisely a functor in the essential image of the further right Kan extension

\[\mathrm{Shv}_{wo}(X,\mathcal{C}) \subset \mathrm{PShv}_{wo}(X,\mathcal{C}) \xrightarrow{i^*} \mathrm{PShv}(X,\mathcal{C})\xrightarrow{\mathrm{restr}} \mathrm{PShv}_{\mathcal{K}}(X,\mathcal{C})\xrightarrow{\mathrm{RKE}} \mathrm{PShv}(X,\mathcal{C}).\]

\subsection{Comparison of (Co)sheaf Theory on \(X\) and \(\mathrm{M}(X)\)}

It is well-known outside the \(\infty\)-categorical setting that the theory of overconvergent sheaves (of abelian groups) on an appropriate rigid analytic space (for example, Schneider and van der Put cover quasi-separated, paracompact spaces) is equivalent to that of sheaves on the associated Berkovich topological space. The purpose of this section is to collect the analogous results in our \(\infty\)-categorical setting for future use. We will often prove a result for sheaves, noting it implies a dual result for cosheaves, by considering for a cosheaf \(\mathcal{G}\) its opposite sheaf \(\mathcal{G}^{op}\). Let us now fix a bit of notation that will be used in the following. Let \(T\) be a locally compact, Hausdorff space. 
We will denote by \(\mathrm{EShv}(T,\mathcal{C})\) (standing for \itshape extended--to compact subspaces--sheaves) \upshape the \(\infty\)-category of sheaves satisfying the equivalent conditions of [HTT] 7.3.4.9.

\begin{prop}
Let \(X\) be a rigid analytic space. The restriction functor 

\[\mathrm{EShv}(\mathrm{M}(X),\mathcal{C}) \rightarrow \mathrm{Fun}(\mathrm{N}(\mathcal{W}(X) \cup \mathcal{K}(X))^{op},\mathcal{C})\]
 
induces an equivalence onto the essential image, the full subcategory of functors \(\mathcal{F}\) satisfying the following conditions, which we claim to be equivalent: (1) \(\mathcal{F}|\mathrm{N}(\mathcal{K}(X))^{op}\) is an overconvergent \(\mathcal{K}\)-sheaf, and \(\mathcal{F}\) is a right Kan extension of this restriction. (2) \(\mathcal{F}|\mathrm{N}(\mathcal{W}(X))^{op}\) is a sheaf for the wide open (partially proper) \(G\)-topology on \(X\), and \(\mathcal{F}\) is a left Kan extension of this restriction. Denoting by \(\mathrm{EShv}_{wo}(X,\mathcal{C})\) this full subcategory, this  produces a commutative diagram consisting entirely of equivalences:

\[
\begin{tikzcd}
\mathrm{Shv}_{\mathcal{K}}(\mathrm{M}(X),\mathcal{C}) \arrow[d]   & \arrow[l] \mathrm{EShv}(\mathrm{M}(X),\mathcal{C}) \arrow[d] \arrow[r]  & \mathrm{Shv}(\mathrm{M}(X),\mathcal{C}) \arrow[d]\\
\mathrm{OverShv}_{\mathcal{K}}(X,\mathcal{C})  & \arrow[l]  \mathrm{EShv}_{wo}(X,\mathcal{C}) \arrow[r]  & \mathrm{Shv}_{wo}(X,\mathcal{C})
\end{tikzcd}
\]

where all arrows are given by restriction, the vertical ones in particular given by restriction along functors defined using the assignments \(U \mapsto \mathrm{M}(U)\) on admissible opens.
\end{prop}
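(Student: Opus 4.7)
The plan is to build the commutative diagram of equivalences in three stages: (i) establish the top row by direct application of [HTT] 7.3.4.9 to the locally compact Hausdorff space $\mathrm{M}(X)$, (ii) establish the two vertical equivalences separately via restriction along the assignment $U \mapsto \mathrm{M}(U)$, and (iii) deduce the equivalence of conditions (1) and (2) along with the middle column by transporting across the diagram. The geometric facts I invoke are precisely those collected in the Appendix: the wide opens $\{\mathrm{M}(W) : W \in \mathcal{W}(X)\}$ form a basis for the topology on $\mathrm{M}(X)$ closed under finite intersection; admissible coverings of wide opens by wide opens correspond to open coverings on the Berkovich side; the inner inclusion $K \subset\subset K'$ is equivalent to $\mathrm{M}(K) \subset \Int(\mathrm{M}(K'))$; and the system of compact neighborhoods of any compact subset of $\mathrm{M}(X)$ admits a cofinal subsystem of the form $\mathrm{M}(K)$ for $K$ quasi-compact admissible.

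For the top row, [HTT] 7.3.4.9 applied to $\mathrm{M}(X)$ immediately yields that $\mathrm{EShv}(\mathrm{M}(X),\mathcal{C})$ is simultaneously the category of right Kan extensions of $\mathcal{K}$-sheaves on $\mathrm{M}(X)$ and of left Kan extensions of sheaves on ordinary opens, whence both top restrictions are equivalences. For the right-hand vertical arrow, the functor $W \mapsto \mathrm{M}(W)$ identifies $\mathcal{W}(X)$ with a basis for $\mathrm{M}(X)$ closed under finite intersection, and admissible coverings correspond to open coverings, so by the standard basis-of-a-topology comparison theorem for $\mathcal{C}$-valued sheaves restriction induces an equivalence $\mathrm{Shv}(\mathrm{M}(X),\mathcal{C}) \simeq \mathrm{Shv}_{wo}(X,\mathcal{C})$.

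The most delicate step is the left-hand vertical arrow $\mathrm{Shv}_{\mathcal{K}}(\mathrm{M}(X),\mathcal{C}) \to \mathrm{OverShv}_{\mathcal{K}}(X,\mathcal{C})$. Restriction along $K \mapsto \mathrm{M}(K)$ preserves the two-element pushout descent condition by compatibility of $\mathrm{M}(-)$ with finite unions and intersections, so one obtains a $\mathcal{K}$-sheaf on $X$; the overconvergence condition then transports from the topological side (where it is automatic in the [HTT] 7.3.4.9 framework as a left Kan extension from opens containing a given compact) using the inner-versus-neighborhood equivalence recalled above. For essential surjectivity and fully faithfulness, the cofinality of $\{\mathrm{M}(K) : K \text{ quasi-compact admissible}, \mathrm{M}(K) \supset C\}$ among compact neighborhoods of any compact $C \subset \mathrm{M}(X)$ allows reconstruction of a topological $\mathcal{K}$-sheaf from its restriction to the subposet of $\mathrm{M}(K)$'s via a filtered limit, giving a quasi-inverse. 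The middle column then follows formally: define $\mathrm{EShv}_{wo}(X,\mathcal{C})$ as the essential image, and transport the middle equivalence of the top row through the two vertical equivalences.

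Finally, the equivalence of conditions (1) and (2) is an application of the dual characterizations of $\mathrm{EShv}(\mathrm{M}(X),\mathcal{C})$ from [HTT] 7.3.4.9 transported along $\mathrm{M}(-)$: an object in the essential image is simultaneously right Kan extended from its $\mathcal{K}$-restriction and left Kan extended from its wide-open restriction, and conversely either property uniquely pins down the object from a side of the diagram whose category has already been identified with a sheaf category on $\mathrm{M}(X)$. The main obstacle, as noted, is the left vertical arrow, since $K \mapsto \mathrm{M}(K)$ is not essentially surjective onto compacts in $\mathrm{M}(X)$; the remedy is the cofinality property from the Appendix, which reduces the relevant Kan extension formulas on the two sides to matching filtered diagrams.
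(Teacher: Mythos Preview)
Your overall strategy is sound and close to the paper's, but the order of operations differs in one important place, and that difference exposes a step you have not fully justified.

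The paper, like you, gets the top row from [HTT] 7.3.4.9 and the right vertical arrow from the basis comparison (Appendix Corollary 5.11). But it does \emph{not} attack the left vertical arrow directly. Instead it first shows that the center vertical restriction lands in $\mathrm{EShv}_{wo}(X,\mathcal{C})$ (using the cofinality of $\mathcal{W}_{K\subset}(X)$ inside opens containing $\mathrm{M}(K)$, Appendix Corollary 5.5), and then deduces that the center vertical is an equivalence by sandwiching it between the known top-right and right-vertical equivalences. The left vertical equivalence then falls out for free from the commuting left square. The equivalence (1)$\Leftrightarrow$(2) is proved separately and rather carefully: (2)$\Rightarrow$(1) by lifting a functor satisfying (2) to an object of $\mathrm{EShv}(\mathrm{M}(X),\mathcal{C})$ and reading off the $\mathcal{K}$-sheaf and right-Kan-extension properties there (the latter via cofinality of $\mathcal{K}_{\subset W}(X)$ inside all compacts of $\mathrm{M}(W)$); (1)$\Rightarrow$(2) via the twin cofinalities of Appendix Corollary 5.7.

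Your route instead proves the left vertical equivalence directly by reconstructing a topological $\mathcal{K}$-sheaf from an overconvergent $\mathcal{K}$-sheaf on $X$ via $C\mapsto \mathrm{colim}_{C\Subset \mathrm{M}(K)}\mathcal{F}(K)$. The cofinality you cite (special compact neighborhoods are cofinal among all compact neighborhoods of a given compact $C$) does give condition (c) of [HTT] 7.3.4.1 and makes both round-trips equivalences on objects. What it does \emph{not} automatically give is condition (b), the Mayer--Vietoris pullback square for an \emph{arbitrary} pair of compacts $C,C'\subset \mathrm{M}(X)$. To get that you need a second, pairwise cofinality: for any neighborhood $N$ of $C\cap C'$ there exist quasi-compact admissibles $K,K'$ with $C\Subset\mathrm{M}(K)$, $C'\Subset\mathrm{M}(K')$, and $\mathrm{M}(K\cap K')\subset N$. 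This is true (separate the disjoint compacts $C\setminus N$ and $C'$ in the locally compact Hausdorff space $\mathrm{M}(X)$, then shrink to special neighborhoods), and once you have it the square for $(C,C')$ becomes a filtered colimit of pullback squares for $(K,K')$, hence a pullback in the stable category $\mathcal{C}$. But you should say this; the single-compact cofinality you invoke is not enough on its own. The paper's indirect route sidesteps this verification entirely, which is what it buys; your route is more symmetric but costs this extra check.
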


\begin{proof}
Let us suppose first that the conditions (1) and (2) claimed equivalent are in fact equivalent. We will demonstrate the remainder of the proposition first, returning to the conditions subsequently. To start with, we know we have a commutative diagram

\[
\begin{tikzcd}
\mathrm{Shv}_{\mathcal{K}}(\mathrm{M}(X),\mathcal{C}) \arrow[d]   & \arrow[l] \mathrm{EShv}(\mathrm{M}(X),\mathcal{C}) \arrow[d] \arrow[r]  & \mathrm{Shv}(\mathrm{M}(X),\mathcal{C}) \arrow[d]\\
\mathrm{Fun}(\mathrm{N}(\mathcal{K}(X))^{op},\mathcal{C})  & \arrow[l]  \mathrm{Fun}(\mathrm{N}(\mathcal{W}(X) \cup \mathcal{K}(X))^{op},\mathcal{C}) \arrow[r]  & \mathrm{Fun}(\mathrm{N}(\mathcal{W}(X))^{op},\mathcal{C})
\end{tikzcd}
\]

where all the arrows are given by restriction. To replace the bottom row with the appropriate full subcategories, we must conduct an analysis of where the various restrictions involved land. We certainly know the right-hand-side vertical arrow carries any object of \(\mathrm{Shv}(\mathrm{M}(X),\mathcal{C})\) to one of \(\mathrm{Shv}_{wo}(X,\mathcal{C})\) (see, for instance, the discussion in Appendix Corollary 5.11). Further, it is clear that the central arrow carries extended sheaves to functors whose restriction to \(\mathrm{N}(\mathcal{W}(X))^{op}\) is a sheaf for the wide open \(G\)-topology. Also, such a functor out of \(\mathrm{N}(\mathcal{K}(X) \cup \mathcal{W}(X))^{op}\) must be a left Kan extension of its restriction to \(\mathrm{N}(\mathcal{W}(X))^{op}\) since, for any quasi-compact admissible \(K \subset X\), the inclusion

\[\mathrm{N}(\mathcal{W}_{K \subset}(X))^{op} \subset \mathrm{N}(\mathrm{Opens}_{\mathrm{M}(K) \subset}(\mathrm{M}(X)))^{op}\]

is cofinal (Appendix Corollary 5.5). In any case, we now know that the functor

\[\mathrm{EShv}(\mathrm{M}(X),\mathcal{C}) \rightarrow \mathrm{Fun}(\mathrm{N}(\mathcal{W}(X) \cup \mathcal{K}(X))^{op}, \mathcal{C})\]

factors as \(\mathrm{EShv}(\mathrm{M}(X),\mathcal{C})\rightarrow \mathrm{EShv}_{wo}(X,\mathcal{C})\subset \mathrm{Fun}(\mathrm{N}(\mathcal{W}(X) \cup \mathcal{K}(X))^{op},\mathcal{C})\). So, we can rewrite our commutative diagram as having form

\[
\begin{tikzcd}
\mathrm{Shv}_{\mathcal{K}}(\mathrm{M}(X),\mathcal{C}) \arrow[d]   & \arrow[l] \mathrm{EShv}(\mathrm{M}(X),\mathcal{C}) \arrow[d] \arrow[r]  & \mathrm{Shv}(\mathrm{M}(X),\mathcal{C}) \arrow[d]\\
\mathrm{Fun}(\mathrm{N}(\mathcal{K}(X))^{op},\mathcal{C})  & \arrow[l] \mathrm{EShv}_{wo}(X,\mathcal{C}) \arrow[r]  &\mathrm{Shv}_{wo}(X,\mathcal{C})
\end{tikzcd}
\]

Note the bottom left horizontal arrow carries all objects to ones of \(\mathrm{OverShv}_{\mathcal{K}}(X,\mathcal{C})\). It is clear that the left vertical one carries all objects to those of \(\mathrm{Shv}_{\mathcal{K}}(X,\mathcal{C})\), but in fact, it also carries all objects to ones of \(\mathrm{OverShv}_{\mathcal{K}}(X,\mathcal{C})\), since for any quasi-compact, admissible \(K\) in \(X\), the inclusion

\[\mathrm{N}(\mathcal{K}_{K \subset\subset}(X))^{op} \subset \mathrm{N}(\mathcal{K}_{\mathrm{M}(K) \subset\subset}(\mathrm{M}(X))^{op}\]

is cofinal (Appendix Corollary 5.6). So our diagram is now
\[
\begin{tikzcd}
\mathrm{Shv}_{\mathcal{K}}(\mathrm{M}(X),\mathcal{C}) \arrow[d]   & \arrow[l] \mathrm{EShv}(\mathrm{M}(X),\mathcal{C}) \arrow[d] \arrow[r]  & \mathrm{Shv}(\mathrm{M}(X),\mathcal{C}) \arrow[d]\\
 \mathrm{OverShv}_{\mathcal{K}}(X,\mathcal{C}) & \arrow[l] \mathrm{EShv}_{wo}(X,\mathcal{C}) \arrow[r]  &\mathrm{Shv}_{wo}(X,\mathcal{C})
\end{tikzcd}
\]

Also, note the right vertical arrow is an equivalence. This is because, by Appendix Corollary 5.11, this restriction map to \(\mathrm{Shv}_{wo}(X,\mathcal{C})\) is identified with restriction to the \(\infty\)-category of sheaves for the basis of the topology on \(\mathrm{M}(X)\) of opens of form \(\mathrm{M}(W)\) with \(W\) wide open (this basis is closed under finite intersection, crucial for the \(\infty\)-categorical setting).
Since all horizontal arrows are known to be equivalences, this shows the vertical center one is as well, in turn implying the left vertical one is, too.
\par
We now turn to demonstrating the equivalence of conditions (1) and (2). First assume (2), and use the notation \(\mathrm{EShv}^2_{wo}(X,\mathcal{C})\) to denote the full subcategory of \(\mathrm{Fun}(\mathrm{N}(\mathcal{W}(X) \cup \mathcal{K}(X))^{op},\mathcal{C})\) satisfying just condition (2). Let \(\mathrm{F} \in \mathrm{EShv}^2_{wo}(X,\mathcal{C})\). Let us show the restriction of \(\mathrm{F}\) to \(\mathrm{N}(\mathcal{K}(X))^{op}\) is a \(\mathcal{K}\)-sheaf. Note that the natural restriction induces an equivalence

\[\mathrm{EShv}(\mathrm{M}(X),\mathcal{C})\rightarrow \mathrm{EShv}^2_{wo}(X,\mathcal{C}).\]

To begin, the fact that restriction (from \(\mathrm{M}(X) \) to \(X\)) carries extended sheaves on \(\mathrm{M}(X)\) to objects of \(\mathrm{EShv}^2_{wo}(X,\mathcal{C})\) follows from our earlier discussion. We can now demonstrate that

\[\mathrm{EShv}(\mathrm{M}(X),\mathcal{C})\rightarrow \mathrm{EShv}^2_{wo}(X,\mathcal{C})\]

is an equivalence by demonstrating both the composition and the last arrow of 

\[\mathrm{EShv}(\mathrm{M}(X),\mathcal{C})\rightarrow \mathrm{EShv}^2_{wo}(X,\mathcal{C})\rightarrow \mathrm{Shv}_{wo}(X,\mathcal{C}) \]

are. The last arrow clearly is, with inverse given by left Kan extension. The composition is just given by restriction to \(\mathrm{N}(\mathcal{W}(X))^{op}\). We can write it alternately as a composition 

\[\mathrm{EShv}(\mathrm{M}(X),\mathcal{C})\rightarrow \mathrm{Shv}(\mathrm{M}(X),\mathcal{C})\rightarrow \mathrm{Shv}_{wo}(X,\mathcal{C}) \]

and note the second arrow has an inverse given by right Kan extension, and the first one by left Kan extension. 
Thus, 

\[\mathrm{EShv}(\mathrm{M}(X),\mathcal{C})\rightarrow \mathrm{EShv}^2_{wo}(X,\mathcal{C})\]

is an equivalence. Hence, we see that for any \(\mathrm{F}\) in the target, there is a functor \(\mathrm{F}'\) in the source whose restriction to \(\mathrm{N}(\mathcal{W}(X) \cup \mathcal{K}(X))^{op}\) is equivalent to \(\mathrm{F}\). It is clear that the restriction of \(\mathrm{F}'\) to \(\mathrm{N}(\mathcal{K}(X))^{op}\) is a \(\mathcal{K}\)-sheaf, since we know that \(\mathrm{F}'|\mathrm{N}(\mathcal{K}(\mathrm{M}(X)))^{op}\) is one. This uses the compatibilities of the assignment \(K \mapsto \mathrm{M}(K)\) for \(K \subset X\) quasi-compact and admissible (preservation of finite unions and intersections). We now have to show that \(\mathrm{F}'|\mathrm{N}(\mathcal{K}(X))^{op}\) is overconvergent. 
\par
To do this, let \(K \subset X\) be quasi-compact and admissible.  Let \(\mathrm{V}=\mathcal{W}(X) \cup \mathcal{K}_{K \subset\subset}(X)\). Let \(\mathrm{V}'=\mathrm{V} \cup \left\{K\right\}\). We know \(\mathrm{F}'|\mathrm{N}(\mathrm{V})^{op}\) is a left Kan extension of \(\mathrm{F}'|\mathrm{N}(\mathcal{W}(X))^{op}\) and similarly, replacing \(\mathrm{V}\) with \(\mathrm{V}'\), since these are true of \(\mathrm{F}\). So, \(\mathrm{F}'|\mathrm{N}(\mathrm{V}')^{op}\) is a left Kan extension of \(\mathrm{F}'|\mathrm{N}(\mathrm{V})^{op}\). Now, note that the inclusion \(\mathrm{N}(\mathcal{K}_{K \subset\subset}(X))^{op} \subset \mathrm{N}(\mathcal{K}_{K \subset\subset}(X) \cup \mathcal{W}_{K \subset}(X))^{op}\) is cofinal (Appendix Corollary 5.7). Thus, we are done showing \(\mathrm{F}'|\mathrm{N}(\mathcal{K}(X))^{op}\) is an overconvergent \(\mathcal{K}\)-sheaf. Finally, we claim that \(\mathrm{F}'|\mathrm{N}(\mathcal{W}(X)\cup \mathcal{K}(X))^{op}\) is a right Kan extension of its restriction to \(\mathrm{F}'|\mathrm{N}(\mathcal{K}(X))^{op}\). To see this, note that, for any \(W \in \mathcal{W}(X)\), the functor

\[\mathrm{N}(\mathcal{K}_{\subset W}(X))\subset \mathrm{N}(\mathcal{K}_{\subset \mathrm{M}(W)}(\mathrm{M}(X)))\]

is cofinal (the latter denotes compact subspaces of \(\mathrm{M}(X)\) contained in \(\mathrm{M}(W)\)). Indeed, let \(C \in \mathcal{K}_{\subset \mathrm{M}(W)}(\mathrm{M}(X))\). We claim that, for any \(C \in \mathrm{N}(\mathcal{K}_{\subset \mathrm{M}(W)}(\mathrm{M}(X)))\),

\[\mathrm{N}(\left\{\mathrm{M}(K): K \in \mathcal{K}(X), \mathrm{M}(K) \subset \mathrm{M}(W)\right\})\times_{\mathrm{N}(\mathcal{K}_{\subset \mathrm{M}(W)}(\mathrm{M}(X)))} (\mathrm{N}(\mathcal{K}_{\subset \mathrm{M}(W)}(\mathrm{M}(X))))_{C/}\]

is weakly contractible. This is because, first of all, for any \(C \subset \mathrm{M}(W)\), there exists a compact neighborhood \(\mathrm{M}(K')\) of \(C\) contained in \(\mathrm{M}(W)\) with \(K'\subset X\) quasi-compact and admissible, by Appendix Remark 5.4. Second, the collection of such \(\mathrm{M}(K')\) lying between \(C\) and \(\mathrm{M}(W)\) is closed under finite intersection. This finishes the claim about weak contractibility. Now, simply note that the functor 

\[\mathrm{N}(\mathcal{K}_{\subset W}(X))\subset \mathrm{N}(\mathcal{K}_{\subset \mathrm{M}(W)}(\mathrm{M}(X)))\]

is a composition

\[\mathrm{N}(\mathcal{K}_{\subset W}(X))\cong \mathrm{N}(\left\{\mathrm{M}(K): K \in \mathcal{K}(X), \mathrm{M}(K) \subset \mathrm{M}(W)\right\}) \subset \mathrm{N}(\mathcal{K}_{\subset \mathrm{M}(W)}(\mathrm{M}(X)))\]

of two cofinal maps: an isomorphism and a cofinal inclusion. Thus, by [HTT] 4.1.1.3 (1) and (2), we are done demonstrating the claim of cofinality. From this, we see that \(\mathrm{F}'|\mathrm{N}(\mathcal{W}(X)\cup \mathcal{K}(X))^{op}\) is a right Kan extension of its restriction to \(\mathrm{F}'|\mathrm{N}(\mathcal{K}(X))^{op}\). Indeed, we know that that \(\mathrm{F}'\) is a right Kan extension of its restriction to \(\mathrm{N}(\mathcal{K}(\mathrm{M}(X)))^{op}\) at \(\mathrm{M}(W)\), and this is enough after the cofinality we demonstrated. 

Of course, since the restriction of \(\mathrm{F}'\) to \(\mathrm{N}(\mathcal{W}(X) \cup \mathcal{K}(X))^{op}\) is equivalent to \(\mathrm{F}\), by [HTT] 4.3.2.6, all that we have shown regarding \(\mathrm{F}'\) shows that (2) implies (1) for \(\mathrm{F}\). 
\par
It remains to demonstrate that the condition (1) implies (2). Assume (1), and consider some \(\mathrm{F} \in \mathrm{EShv}_{wo}^1(X,\mathcal{C})\) where the superscript 1 denotes that we are considering functors out of \(\mathrm{N}(\mathcal{W}(X) \cup \mathcal{K}(X))^{op}\) fulfilling (1). We must demonstrate that \(\mathrm{F}|\mathrm{N}(\mathcal{W}(X))^{op}\) is a wide open \(G\)-topologized sheaf. This is obvious, though, as \(\mathrm{F}\) is in the essential image of 

 \[\mathrm{Shv}_{\mathcal{K}}(X,\mathcal{C}) \xrightarrow{\mathrm{RKE}} \mathrm{Shv}(X,\mathcal{C})\xrightarrow{\mathrm{restr}} \mathrm{Fun}(\mathrm{N}(\mathcal{W}(X)\cup \mathcal{K}(X))^{op},\mathcal{C}).\]

 It remains to show that \(\mathrm{F}\) is a left Kan extension of \(\mathrm{F}|\mathrm{N}(\mathcal{W}(X))^{op}\). Let \(K \subset X\) be quasi-compact and admissible. We note that, by Appendix Corollary 5.7, there is a cofinal inclusion

\[\mathrm{N}(\mathcal{W}_{K \subset}(X))^{op} \subset \mathrm{N}(\mathcal{W}_{K \subset }(X) \cup \mathcal{K}_{K \subset\subset}(X))^{op}\]

and another one

\[\mathrm{N}(\mathcal{K}_{K \subset\subset}(X))^{op} \subset \mathrm{N}(\mathcal{W}_{K \subset }(X)\cup \mathcal{K}_{K \subset\subset}(X))^{op}.\]

From the first cofinal inclusion we note it will be enough to show that \(\mathrm{F}|\mathrm{N}(\mathrm{V}')^{op}\) is a left Kan extension of  \(\mathrm{F}|\mathrm{N}(\mathrm{V})^{op}\). This is clear except at \(\left\{K\right\}\). But, by the second cofinal inclusion, this follows since \(\mathrm{F}|\mathrm{N}(\mathcal{K}_{K \subset\subset}(X) \cup \left\{K\right\})^{op}\) is a left Kan extension of \(\mathrm{F}|\mathrm{N}(\mathcal{K}_{K \subset\subset}(X))^{op}\).
\par
Last, we remark that the composition

\[\mathrm{EShv}(\mathrm{M}(X),\mathcal{C})\xrightarrow{\mathrm{restr}} \mathrm{EShv}_{wo}(X,\mathcal{C}) \subset \mathrm{Fun}(\mathrm{N}(\mathcal{W}(X) \cup \mathcal{K}(X))^{op},\mathcal{C})\]

is one between an equivalence and a full, replete subcategory inclusion. Thus, the essential image is \( \mathrm{EShv}_{wo}(X,\mathcal{C})\).
\end{proof}

It follows from the above fairly straightforwardly that the notions of overconvergent sheaves and corresponding \(\mathcal{K}\)-sheaves are equivalent. 

\begin{cor}
There is a functor \(\mathrm{R}:\mathrm{OverShv}_{\mathcal{K}}(X,\mathcal{C}) \rightarrow \mathrm{OverShv}(X,\mathcal{C})\) furnished by right Kan extension, and it is an equivalence.
\end{cor}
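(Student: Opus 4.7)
The plan is to leverage Proposition 2.7, which provides an equivalence $r: \mathrm{Shv}(X,\mathcal{C}) \xrightarrow{\sim} \mathrm{Shv}_{\mathcal{K}}(X,\mathcal{C})$ by restriction, with inverse the right Kan extension functor $\mathrm{R}$. Since these two functors are already inverse equivalences between the ambient categories, it will suffice to show that the equivalence restricts to the full subcategories of overconvergent objects: concretely, that $r$ sends $\mathrm{OverShv}(X,\mathcal{C})$ into $\mathrm{OverShv}_{\mathcal{K}}(X,\mathcal{C})$, and that $\mathrm{R}$ sends $\mathrm{OverShv}_{\mathcal{K}}(X,\mathcal{C})$ into $\mathrm{OverShv}(X,\mathcal{C})$.

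Both of these stability checks will be driven by a single double cofinality. For a quasi-compact admissible $K \subset X$, Appendix Corollary 5.7 supplies cofinal inclusions
\[
\mathrm{N}(\mathcal{W}_{K \subset}(X))\op \hookrightarrow \mathrm{N}(\mathcal{W}_{K \subset}(X) \cup \mathcal{K}_{K \subset\subset}(X))\op \hookleftarrow \mathrm{N}(\mathcal{K}_{K \subset\subset}(X))\op,
\]
so for any functor defined on (a subcategory containing) the middle poset, its colimits along the two flanking posets are each canonically equivalent to the colimit along the union, hence to each other.

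For the first stability check, let $\mathcal{F} \in \mathrm{OverShv}(X,\mathcal{C})$. Overconvergence at $K$ gives $\mathcal{F}(K) \simeq \colim_{W \in \mathcal{W}_{K \subset}(X)\op} \mathcal{F}(W)$, and the cofinality chain above exhibits this as $\colim_{K' \in \mathcal{K}_{K \subset\subset}(X)\op} \mathcal{F}(K')$, which is exactly overconvergence of $r(\mathcal{F}) = \mathcal{F}|\mathrm{N}(\mathcal{K}(X))\op$ as a $\mathcal{K}$-sheaf. For the second check, take $\mathcal{G} \in \mathrm{OverShv}_{\mathcal{K}}(X,\mathcal{C})$ and set $\mathcal{F} := \mathrm{R}(\mathcal{G})$, noting $\mathcal{F}|\mathrm{N}(\mathcal{K}(X))\op \simeq \mathcal{G}$. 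Running the chain in the opposite direction converts overconvergence of $\mathcal{G}$, i.e.\ $\mathcal{F}(K) \simeq \colim_{K' \in \mathcal{K}_{K \subset\subset}(X)\op} \mathcal{F}(K')$, into overconvergence of $\mathcal{F}$, i.e.\ $\mathcal{F}(K) \simeq \colim_{W \in \mathcal{W}_{K \subset}(X)\op} \mathcal{F}(W)$. Both cofinal comparisons are already handled in the appendix and Proposition 2.7 does the heavy lifting on the underlying equivalence, so I anticipate no serious obstacle; the main content is the bookkeeping of these two colimit identifications.
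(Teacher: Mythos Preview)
Your argument is correct, and it is somewhat more direct than the paper's. The paper deduces both stability checks by invoking the equivalence of conditions (1) and (2) in Proposition~3.7: to show $\mathrm{RKE}$ of an overconvergent $\mathcal{K}$-sheaf is overconvergent it uses (1)$\Rightarrow$(2), and to show an overconvergent sheaf restricts to an overconvergent $\mathcal{K}$-sheaf it uses (2)$\Rightarrow$(1). The latter implication, as proved in Proposition~3.7, passes through the Berkovich comparison $\mathrm{EShv}(\mathrm{M}(X),\mathcal{C})\simeq \mathrm{EShv}^2_{wo}(X,\mathcal{C})$ in order to first establish the $\mathcal{K}$-sheaf property, and only then gets overconvergence via the cofinalities of Corollary~5.7. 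You bypass that detour entirely: since in your setting the underlying object is already known to be a sheaf (hence its restriction a $\mathcal{K}$-sheaf) by Proposition~2.7, the only thing to verify in either direction is the overconvergence colimit identity, and the double cofinality of Corollary~5.7 handles both symmetrically. Your route is more economical for this corollary in isolation; the paper's route is natural given that Proposition~3.7 is being proved anyway for the Berkovich comparison, so it costs nothing to cite it here.
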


\begin{proof}

This functor is obtained by restricting the functor \(\mathrm{RKE}:\mathrm{Shv}_{\mathcal{K}}(X,\mathcal{C}) \rightarrow \mathrm{Shv}(X,\mathcal{C})\) to the full subcategory \(\mathrm{OverShv}_{\mathcal{K}}(X,\mathcal{C})\). We first demonstrate that, for \(\mathcal{F} \in \mathrm{OverShv}_{\mathcal{K}}(X,\mathcal{C})\), \(\mathrm{RKE}(\mathcal{F})\) is overconvergent. However, this follows easily from the fact that the restriction of \(\mathrm{RKE}(\mathcal{F})\) to \(\mathrm{N}(\mathcal{W}(X) \cup \mathcal{K}(X))^{op}\) must, by (1) implying (2) in the lemma, be a left Kan extension of the restriction to \(\mathrm{N}(\mathcal{W}(X))^{op}\). Of course, any sheaf equivalent to \(\mathrm{RKE}(\mathcal{F})\) is also overconvergent, meaning any sheaf in the essential image of \(\mathrm{RKE}:\mathrm{OverShv}_{\mathcal{K}}(X,\mathcal{C}) \rightarrow \mathrm{Shv}(X,\mathcal{C})\) is overconvergent.
\par
To see the functor we have constructed is an equivalence, denote by \(\mathcal{R}\) the full subcategory of \(\mathrm{Fun}(\mathrm{N}(\mathcal{U}(X))^{op},\mathcal{C})\) consisting of functors which are right Kan extensions of their restriction to \(\mathrm{N}(\mathcal{K}(X))^{op}\), which is an overconvergent \(\mathcal{K}\)-sheaf. Restriction induces an equivalence \(\mathcal{R} \rightarrow \mathrm{OverShv}_{\mathcal{K}}(X,\mathcal{C})\). We are almost done. We will just show that a functor belongs to \(\mathcal{R}\) if and only if it is an overconvergent sheaf. Suppose \(\mathrm{F}\) is an overconvergent sheaf. Its restriction to \(\mathrm{N}(\mathcal{W}(X) \cup \mathcal{K}(X))^{op}\) then satisfies condition (2) of the lemma from above. This is because the restriction of any rigid analytic sheaf to \(\mathrm{N}(\mathcal{W}(X))^{op}\) is a sheaf for the wide open \(G\)-topology, and the left Kan extension condition follows from overconvergence. This implies (1), which entails that the restriction of \(\mathrm{F}\) to \(\mathrm{N}(\mathcal{K}(X))^{op}\) is an overconvergent \(\mathcal{K}\)-sheaf. Further, it is clear \(\mathrm{F}\) is a right Kan extension of this restriction, as this is true for any sheaf. This completes one direction. The other direction has already been shown above. 
\end{proof}

We are now in the position to complete the discussion from the end of the previous subsection. Let us note that, since the essential image of the composition

\[\mathrm{Shv}_{wo}(X,\mathcal{C})\xrightarrow{\mathrm{LKE}} \mathrm{EShv}_{wo}(X,\mathcal{C})\xrightarrow{\mathrm{restr}} \mathrm{OverShv}_{\mathcal{K}}(X,\mathcal{C})\xrightarrow{\mathrm{RKE}}\mathrm{OverShv}(X,\mathcal{C})\subset \mathrm{PShv}(X,\mathcal{C})\]

is identified with the essential image of

\[\mathrm{Shv}_{wo}(X,\mathcal{C}) \subset \mathrm{PShv}_{wo}(X,\mathcal{C})\xrightarrow{\mathrm{LKE}} \mathrm{PShv}(X,\mathcal{C})\xrightarrow{\mathrm{restr}} \mathrm{PShv}_{\mathcal{K}}(X,\mathcal{C})\xrightarrow{\mathrm{RKE}} \mathrm{PShv}(X,\mathcal{C}),\]

and since the first essential image is precisely the full subcategory of overconvergent sheaves, we note that, from our work at the end of the previous subsection, we have the following characterization of overconvergent sheaves:

\begin{prop}
The full subcategory \(\mathrm{OverShv}(X,\mathcal{C}) \subset \mathrm{Shv}(X,\mathcal{C})\) is the essential image of the composition

\[\mathrm{Shv}_{wo}(X,\mathcal{C}) \subset \mathrm{PShv}_{wo}(X,\mathcal{C}) \xrightarrow{i^*} \mathrm{PShv}(X,\mathcal{C}) \xrightarrow{\mathrm{L}} \mathrm{Shv}(X,\mathcal{C}).\]
\end{prop}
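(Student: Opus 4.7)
The plan is to reduce the statement to the equivalence theorems already established, by comparing $\mathrm{L}\circ i^*$ against a second composition whose essential image is manifestly $\mathrm{OverShv}(X,\mathcal{C})$. Specifically, I would juxtapose it with
\[\mathrm{Shv}_{wo}(X,\mathcal{C})\xrightarrow{\mathrm{LKE}} \mathrm{EShv}_{wo}(X,\mathcal{C})\xrightarrow{\mathrm{restr}} \mathrm{OverShv}_{\mathcal{K}}(X,\mathcal{C})\xrightarrow{\mathrm{RKE}}\mathrm{OverShv}(X,\mathcal{C})\subset \mathrm{Shv}(X,\mathcal{C}),\]
which by Proposition 3.7 and Corollary 3.8 is a composition of equivalences followed by a full subcategory inclusion, and hence has essential image exactly $\mathrm{OverShv}(X,\mathcal{C})$. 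It therefore suffices to show that this parallel composition agrees up to natural equivalence with $\mathrm{L}\circ i^*$ on the source $\mathrm{Shv}_{wo}(X,\mathcal{C})$.

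To effect the identification, I would invoke the chain of natural equivalences sketched at the end of Subsection 3.1, which factors the sheafification $\mathrm{L}:\mathrm{PShv}(X,\mathcal{C})\to \mathrm{Shv}(X,\mathcal{C})$ (up to natural equivalence) as restriction to $\mathcal{K}(X)$ followed by special-topology sheafification and right Kan extension back to $\mathrm{Shv}(X,\mathcal{C})$. Precomposing with $i^*$ and restricting the source to $\mathrm{Shv}_{wo}(X,\mathcal{C})$, the comparison reduces to two checks. First, for any $\mathcal{F}\in \mathrm{Shv}_{wo}(X,\mathcal{C})$ and any $K\in \mathcal{K}(X)$, the pointwise colimit formula for $i^*$ expresses $(i^*\mathcal{F})(K)$ as the colimit of $\mathcal{F}$ over wide opens containing $K$, matching the pointwise formula for $\mathrm{LKE}(\mathcal{F})$ regarded as an extended wide-open sheaf evaluated at $K$. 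Second, the restriction of this intermediate functor to $\mathrm{N}(\mathcal{K}(X))^{op}$ must in fact be an overconvergent $\mathcal{K}$-sheaf, so that the special-topology sheafification step acts trivially up to equivalence.

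The main obstacle is this second point. It is, however, already contained in the content of Proposition 3.7: the equivalence between $\mathrm{EShv}_{wo}(X,\mathcal{C})$ and $\mathrm{OverShv}_{\mathcal{K}}(X,\mathcal{C})$ proved there is implemented by restriction, so the restriction of $\mathrm{LKE}(\mathcal{F})$ to $\mathrm{N}(\mathcal{K}(X))^{op}$ is by construction an overconvergent $\mathcal{K}$-sheaf. Combined with the formula-matching above, we conclude that $(\mathrm{L}\circ i^*)(\mathcal{F})$ is naturally equivalent to $\mathrm{RKE}(\mathrm{restr}(\mathrm{LKE}(\mathcal{F})))$, namely to the image of $\mathcal{F}$ under the parallel composition. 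Since every overconvergent sheaf arises this way by essential surjectivity of the parallel composition, the two essential images coincide, completing the proof.
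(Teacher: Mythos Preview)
Your proposal is correct and follows essentially the same route as the paper. The paper's argument, given in the paragraph immediately preceding the proposition, identifies the essential image of the parallel composition through $\mathrm{EShv}_{wo}(X,\mathcal{C})$ and $\mathrm{OverShv}_{\mathcal{K}}(X,\mathcal{C})$ with that of $\mathrm{L}\circ i^*$ via exactly the factorization of $\mathrm{L}$ through restriction-to-$\mathcal{K}$ and special sheafification discussed at the end of Subsection~3.1, invoking that the intermediate functor is already a $\mathcal{K}$-sheaf so that special sheafification acts trivially; your two checks make this last point explicit.
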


\section{The Verdier Duality Sheaf-Cosheaf Equivalence}

In this section, we provide a version of Lurie's Verdier duality theorem for overconvergent sheaves, showing that the Verdier functor sending an overconvergent sheaf to its associated cosheaf of (quasi-)compactly supported sections induces an equivalence between overconvergent sheaf and cosheaf categories. We remark that the Verdier dual of a sheaf is often supposed to be a \itshape sheaf,\upshape not a cosheaf, but for many natural choices of \(\mathcal{C}\), there is a natural way to associate to the cosheaf of (quasi-)compactly supported sections a dual sheaf. 
\subsection{Construction of the Verdier Duality Functor}
We let \(\mathrm{M}_X\), denoted \(\mathrm{M}\) when \(X\) is understood, be the poset of pairs \((i,S)\) where \(i=0,1,2\) and \(S \subset X\). If \(i=0\), we require \(S\) to be quasi-compact and admissible, and if \(i=2\), we require \(S \) to be of form \(X \setminus K\), where \(K\) is quasi-compact and admissible. The partial order will be given as follows: \((i,S) \leq (j,S')\) if (i) \(i=0,j=2\); (ii) \(S \subset S'\). We will denote by \(\mathrm{M}_0,\mathrm{M}_2,\mathrm{M}_2\) the posets where we only consider \(i=0,1,2\) respectively. Note that \(\mathrm{M}_0\) can be identified with \(\mathcal{K}(X)\) and \(\mathrm{M}_2\) with \(\mathcal{K}(X)^{op}\). We can also define a poset \(\mathrm{M}'\) similarly, where now we allow in addition pairs \((2,S)\) where \(S\) is the complement in \(X\) of some admissible open. We can identify \(\mathrm{M}_2'\) with \(\mathcal{U}(X)^{op}\). Denote by \(\mathcal{E}'\) the full subcategory of \( \mathrm{Fun}(\mathrm{N}(\mathrm{M}'),\mathcal{C})\) consisting of those functors \(\mathrm{F}\) satisfying (i) the restriction to \(\mathrm{N}(\mathrm{M}_2')\) is a sheaf (under the identification described earlier; we will often invoke such identifications implicitly); (ii) the restriction to \(\mathrm{N}(\mathrm{M}_1')\) is zero; (iii) the functor is a right Kan extension of its restriction to \(\mathrm{N}(\mathrm{M}_1' \cup \mathrm{M}_2')\). 
\par
Let us note that condition (ii) could be replaced by the condition: \(\mathrm{F}|\mathrm{N}(\mathrm{M}'_1\cup\mathrm{M}'_2)\) is a \itshape left \upshape Kan extension of \(\mathrm{F}|\mathrm{N}(\mathrm{M}'_2)\), since, for any \((1,S) \in \mathrm{M}'_1\), we know that \(\mathrm{N}(\mathrm{M}'_2)_{/(1,S)}\) is empty. This is an important observation, as this shows the restriction \(\mathcal{E}'\rightarrow \mathrm{Shv}(X,\mathcal{C})\) is a trivial Kan fibration, being the composition of two trivial Kan fibrations. It admits a section of form \(s:= \mathrm{RKE} \circ \mathrm{LKE}:\mathrm{Shv}(X,\mathcal{C})\rightarrow \mathcal{E}'.\)
\par
The Verdier duality functor will now be defined as follows. We will show that the restrictions of functors in \(\mathcal{E}'\) to \(\mathrm{N}(\mathrm{M}_0')\) are \(\mathcal{K}\)-cosheaves. Then, we just define the Verdier duality functor to be

\[\mathrm{Shv}(X,\mathcal{C})\xrightarrow{s} \mathcal{E}' \xrightarrow{\mathrm{restr}} \mathrm{CShv}_{\mathcal{K}}(X,\mathcal{C})\xrightarrow{\mathrm{LKE}} \mathrm{CShv}(X,\mathcal{C})\]

\par
It remains to demonstrate that restriction to \(\mathrm{N}(\mathrm{M}_0')\) induces a functor \(\mathcal{E}' \rightarrow \mathrm{CShv}_{\mathcal{K}}(X,\mathcal{C})\). To see this, let us consider two quasi-compact admissible opens \(K,K'\subset X\) for which we must demonstrate the \(\mathcal{K}\)-cosheaf codescent condition for restriction of \(\mathrm{F} \in \mathcal{E}'\) to \(\mathrm{N}(\mathrm{M}_0')\). Let us note that there are the following squares in \(\mathrm{N}(\mathrm{M}')\):

\[
\begin{tikzcd}
(1,K \cap K') \arrow[d] \arrow[r]  & (1,K')\arrow[d]  \\
(1,K) \arrow[r] & (1,K \cup K')
\end{tikzcd}
\]

\[
\begin{tikzcd}
(2,K \cap K') \arrow[d] \arrow[r]  & (2,K')\arrow[d]  \\
(2,K) \arrow[r] & (2,K \cup K')
\end{tikzcd}
\]

\[
\begin{tikzcd}
(2,\emptyset) \arrow[d] \arrow[r]  & (2,\emptyset) \arrow[d]  \\
(2,\emptyset) \arrow[r] & (2,\emptyset)
\end{tikzcd}
\]

and 

\[
\begin{tikzcd}
(0,K \cap K') \arrow[d] \arrow[r]  & (0,K')\arrow[d]  \\
(0,K) \arrow[r] & (0,K \cup K')
\end{tikzcd}
\]

There is an evident map from the first and third squares to the second, and a map from the last square to the first and third. We thus get a commutative square of squares. Composing with \(\mathrm{F}\), we obtain a diagram of squares in \(\mathcal{C}\). Note that, for every admissible, quasi-compact \(C \subset X\), the diagram \((2,\emptyset) \rightarrow (2,C) \leftarrow (1,C)\) is initial in \(\mathrm{N}(\mathrm{M}')_{(0,C)/} \times_{\mathrm{N}(\mathrm{M}')} \mathrm{N}(\mathrm{M}_1' \cup \mathrm{M}_2')\). Therefore, the condition that \(\mathrm{F}\) is a right Kan extension of the restriction to \(\mathrm{N}(\mathrm{M}_1' \cup \mathrm{M}_2')\) now yields that each

\[
\begin{tikzcd}
\mathrm{F}(0,C) \arrow[d] \arrow[r]  & \mathrm{F}(1,C)\arrow[d]  \\
\mathrm{F}(2,\emptyset) \arrow[r] & \mathrm{F}(2,C)
\end{tikzcd}
\]

is a pullback square. Note that the square
\[
\begin{tikzcd}
\mathrm{F}(0,K\cap K') \arrow[d] \arrow[r]  & \mathrm{F}(0,K)\arrow[d]  \\
\mathrm{F}(0,K') \arrow[r] & \mathrm{F}(0,K\cup K')
\end{tikzcd}
\]

is seen to be a fiber in the stable \(\infty\)-category \(\mathrm{Fun}(\Delta^1\times \Delta^1,\mathcal{C})\) of the map from the square

\[
\begin{tikzcd}
\mathrm{F}(2,\emptyset) \arrow[d] \arrow[r]  & \mathrm{F}(2,\emptyset)\arrow[d]  \\
\mathrm{F}(2,\emptyset) \arrow[r] & \mathrm{F}(2,\emptyset)
\end{tikzcd}
\]

to

\[
\begin{tikzcd}
\mathrm{F}(2,K\cap K') \arrow[d] \arrow[r]  & \mathrm{F}(2,K)\arrow[d]  \\
\mathrm{F}(2,K') \arrow[r] & \mathrm{F}(2,K\cup K')
\end{tikzcd}
\]

induced from applying \(\mathrm{F}\) to the map from the third to the second of the four squares in \(\mathrm{N}(\mathrm{M}')\) we mention above. 
Since this is a map of pullbacks (that the target square is a pullback follows from our earlier observation that \(\left\{X\setminus K',X\setminus K\right\}\) admissibly cover their union), the fiber is also a pullback square. However, this means it is also a pushout, which gives the \(\mathcal{K}\)-cosheaf codescent condition, as desired.
\newline
This demonstrates that the restriction to \(\mathrm{N}(\mathrm{M}_0')\) induces a functor \[\mathcal{E}' \rightarrow \mathrm{CShv}_{\mathcal{K}}(X,\mathcal{C})\].

\begin{rem}
The above demonstrates that the restriction of \(\mathrm{F}\) to \(\mathrm{N}(\mathrm{M}_0')\) is given informally by the assignment \(K \mapsto \Gamma_{K}(X,\mathcal{G})=fib (\Gamma(X,\mathcal{G}) \rightarrow \Gamma(X \setminus K,\mathcal{G}))\) on compact subspaces of \(X\), where \(\mathcal{G}\) is the sheaf determined by the restriction of \(\mathrm{F}\) to \(\mathrm{N}(\mathrm{M}_2')\), and that our constructed functor \(\mathrm{Shv}(X,\mathcal{C}) \rightarrow \mathrm{CShv}(X,\mathcal{C})\) is informally given by \(U \mapsto \mathrm{colim}_{K \in \mathcal{K}(U)} \Gamma_K(X,\mathcal{G})\).
\end{rem}

We now provide a slightly different construction of the duality functor. To do so, define a subposet \(\mathrm{M}_{ao}'\), defined exactly as \(\mathrm{M}'\) is, except that any pair \((1,S)\) must satisfy that \(S = X \setminus U\) for \(U\) admissible open. That is, \(S\) is not arbitrary. We can use the notation \(\mathrm{M}_{ao,i}'\) for \(i=0,1,2\) to denote the appropriate subposets of the various \(\mathrm{M}'_i\). Notice that any functor \(\mathrm{F}' \in \mathcal{E}'\) is actually a right Kan extension of its restriction to \(\mathrm{N}(\mathrm{M}_{ao,1}' \cup \mathrm{M}_{ao,2}')\). This follows easily if we can just demonstrate it at elements of \(\mathrm{M}_1' \setminus \mathrm{M}'_{ao,1}\). Let \((1,S) \in \mathrm{N}(\mathrm{M}_1' \setminus \mathrm{M}'_{ao,1})\). There exists \((1,S) \leq (1,X\setminus U)\) where \(U\subset X\) is admissible open, as we can always take \(U=\emptyset\). Note that we now have that the inclusion 

\[\mathrm{N}(\mathrm{M}'_{ao,1})_{(1,S)/}\subset \mathrm{N}(\mathrm{M}'_{ao,1}\cup \mathrm{M}'_{ao,2})_{(1,S)/}\]

is initial (the opposite is cofinal). Indeed, we have that the poset of \((1,S)\leq (i,X\setminus U)\) with \(U\) admissible in \(X\) is nonempty, and given such \((i,X\setminus U)\), we know \((1,S) \leq (1,X\setminus U) \leq (i,X\setminus U)\). Further, the \(X\setminus U'\) occurring in such pairs \((1,S)\leq (1,X \setminus U')\leq (i,X\setminus U)\) are stable under finite unions, since \(X\setminus U_1' \cup X\setminus U_2' = X\setminus (U_1' \cap U_2')\). Now, define a full subcategory

\[\mathcal{E}'_{ao}\subset \mathrm{Fun}(\mathrm{N}(\mathrm{M}'_{ao}),\mathcal{C})\]
to consist of those functors \(\mathrm{F}'\) whose restriction to \(\mathrm{N}(\mathrm{M}_{ao,2}')\) is a sheaf, whose restriction to \(\mathrm{N}(\mathrm{M}_{ao,1}')\) is zero, and which is a right Kan extension of its restriction to \( \mathrm{N}(\mathrm{M}_{ao,1}' \cup \mathrm{M}_{ao,2}')\). We have a restriction functor

\[\mathcal{E}'\rightarrow \mathcal{E}'_{ao}\]

along the inclusion

\[\mathrm{N}(\mathrm{M}_{ao}')\subset \mathrm{N}(\mathrm{M}').\]

Note that, strictly speaking, it was not necessary solely to get this restriction functor to demonstrate a functor \(\mathrm{F}' \in \mathcal{E}'\) is a right Kan extension of its restriction \(\mathrm{F}'|\mathrm{N}(\mathrm{M}'_{ao,1} \cup \mathrm{M}'_{ao,2})\) \itshape everywhere, \upshape but only at elements of \(\mathrm{M}'_2\) of form \((0,K)\), and this would follow readily from the fact that the diagram

\[(1,K)\rightarrow (2,K) \leftarrow (2,\emptyset)\]

is initial in \(\mathrm{N}(\mathrm{M}'_{ao,1}\cup\mathrm{M}'_{ao,2})_{(0,K)/}\).
Finally, define a poset \(\mathrm{M}^{top}_{ao}\) to consist of pairs \((i,S)\) with \(i=0,1,2\) and \(S \subset \mathrm{M}(X)\) of form \(\mathrm{M}(K)\) with \(K \subset X\) quasi-compact and admissible if \(i=0\), and of form \(\mathrm{M}(X) \setminus \mathrm{M}(U)\) with \(U \subset X\) admissible open otherwise. Define \((i,S) \leq (j,S')\) precisely if \(i=0,j=2\) or \(S \subset S'\). We now note that have an evident equivalence

\[\mathrm{N}(\mathrm{M}_{ao}')\cong \mathrm{N}(\mathrm{M}^{top}_{ao}).\]

This is given on objects by \((0,K) \mapsto (0,\mathrm{M}(K)),(i,X\setminus U) \mapsto (i,\mathrm{M}(X)\setminus \mathrm{M}(U))\) for \(i=1,2\). This identifies the objects of either poset. The morphisms are also identified in a natural way. Indeed, we know for two quasi-compact admissible opens \(K_1,K_2,\), we have \(K_1 \subset K_2\) if and only if \(\mathrm{M}(K_1) \subset \mathrm{M}(K_2)\). This gives the desired identification at index 0. When we restrict to indices 1 and 2, we note that, if \(U_1,U_2\) are admissible opens, \(\mathrm{M}(X) \setminus \mathrm{M}(U_2) \subset \mathrm{M}(X) \setminus \mathrm{M}(U_1)\) if and only if \(\mathrm{M}(U_1) \subset \mathrm{M}(U_2)\) if and only if \(U_1 \subset U_2\) if and only if \(X\setminus U_2 \subset X \setminus U_1\). Finally, when comparing elements of index 0 with those of index 1 or 2, we must check that, for \(K \subset X\) quasi-compact admissible open, and \(U\subset X\) admissible open, \(K \subset X \setminus U\) if and only if \(\mathrm{M}(K) \subset \mathrm{M}(X)\setminus \mathrm{M}(U)\). First suppose the former. Then, \(K \cap U = \emptyset\). Thus, \(\mathrm{M}(K) \cap \mathrm{M}(U)= \emptyset\). So, \(\mathrm{M}(K) \subset \mathrm{M}(X) \setminus \mathrm{M}(U)\), as desired for the first direction. Now, suppose this is true. Then, \(\mathrm{M}(K) \cap \mathrm{M}(U) = \emptyset\), so \(K \cap U= \emptyset\), so \(K \subset X \setminus U\), as desired. So we clearly have the desired identification of poset categories. 

Hence, we can just as well view \(\mathcal{E}'_{ao}\) as consisting of functors defined on a poset category derived from subsets of \(\mathrm{M}(X)\).
Furthermore, the restriction 

\[\mathcal{E}'\rightarrow \mathcal{E}'_{ao}\]

is an equivalence, since it fits into the following commutative diagram

\[
\begin{tikzcd}
\mathcal{E}'\arrow[d] \arrow[r]  & \mathrm{Shv}(X,\mathcal{C})\arrow[d]  \\
\mathcal{E}'_{ao} \arrow[r] & \mathrm{Shv}(X,\mathcal{C})
\end{tikzcd}
\]

where the right vertical arrow is the identity, and both horizontal arrows are equivalences (the natural restrictions to index 2). Thus, we can define the duality functor either using

\[\mathrm{Shv}(X,\mathcal{C}) \cong \mathcal{E}'\rightarrow \mathrm{CShv}_{\mathcal{K}}(X,\mathcal{C}) \xrightarrow{\mathrm{LKE}} \mathrm{CShv}(X,\mathcal{C})\]

or

\[\mathrm{Shv}(X,\mathcal{C}) \cong \mathcal{E}_{ao}'\rightarrow \mathrm{CShv}_{\mathcal{K}}(X,\mathcal{C}) \xrightarrow{\mathrm{LKE}} \mathrm{CShv}(X,\mathcal{C})\]
 
where the first equivalence \(\mathrm{Shv}(X,\mathcal{C}) \cong \mathcal{E}'\) is given by the earlier mentioned \(s\), and the equivalence \(\mathrm{Shv}(X,\mathcal{C}) \cong \mathcal{E}_{ao}'\) is given by \(\mathrm{Shv}(X,\mathcal{C})\xrightarrow{s} \mathcal{E}'\xrightarrow{\mathrm{restr}} \mathcal{E}'_{ao}\).

\subsection{Lurie-style Verdier Duality Equivalence for Quasi-compact spaces}
The purpose of this section is to prove that, for quasi-compact spaces, a relatively direct analogue of Lurie's proof of Verdier duality in the topological setting is available.

\begin{thm}
Let \(X\) be quasi-compact. The Verdier duality functor constructed in an earlier section restricts to an equivalence of \(\infty\)-categories \(\mathrm{OverShv}(X,\mathcal{C}) \cong \mathrm{OverCShv}(X,\mathcal{C})\).
\end{thm}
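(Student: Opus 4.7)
The plan is to adapt Lurie's approach to Verdier duality (HTT 7.3.4) to the overconvergent rigid analytic setting, using the common \(\infty\)-category \(\mathcal{E}'_{ao}\) from Subsection 4.1. Define the full subcategory \(\mathcal{E}'_{\mathrm{over}} \subset \mathcal{E}'_{ao}\) of functors whose restriction to \(\mathrm{N}(\mathrm{M}'_{ao,2})\) is overconvergent. Since \(\mathcal{E}'_{ao} \to \mathrm{Shv}(X,\mathcal{C})\) is a trivial Kan fibration, restriction yields a trivial Kan fibration \(\mathcal{E}'_{\mathrm{over}} \to \mathrm{OverShv}(X,\mathcal{C})\), so the Verdier functor on the overconvergent subcategory factors as a section \(s : \mathrm{OverShv}(X,\mathcal{C}) \xrightarrow{\simeq} \mathcal{E}'_{\mathrm{over}}\) followed by restriction to \(\mathrm{N}(\mathrm{M}_0)\) and \(\mathrm{LKE}\).

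First I would verify that this second leg lands in \(\mathrm{OverCShv}(X,\mathcal{C})\). From Remark 4.2, \(\mathrm{F}(0,K)\) is the fiber of \(\Gamma(X,\mathcal{F}) \to \Gamma(X\setminus K,\mathcal{F})\). Since fibers commute with limits in the stable \(\infty\)-category \(\mathcal{C}\), the overconvergence formula \(\mathrm{F}(0,K) \simeq \lim_{K \subset\subset K'} \mathrm{F}(0,K')\) for the associated \(\mathcal{K}\)-cosheaf reduces to the identity \(\Gamma(X\setminus K,\mathcal{F}) \simeq \lim_{K \subset\subset K'} \Gamma(X\setminus K',\mathcal{F})\). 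This is the sheaf condition for \(\mathcal{F}\) applied to the covering \(X\setminus K = \bigcup_{K \subset\subset K'}(X\setminus K')\), and follows from Lemma 2.8, the compatibilities of \(\mathrm{M}(-)\) collected in the Appendix, and the existence of a fundamental system of compact neighborhoods of \(\mathrm{M}(K)\) inside the compact Hausdorff space \(\mathrm{M}(X)\) (using \(X\) quasi-compact).

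Next I would construct a quasi-inverse by running the construction dually. Define a full subcategory \(\mathcal{D}' \subset \mathrm{Fun}(\mathrm{N}(\mathrm{M}'_{ao}),\mathcal{C})\) of functors whose restriction to \(\mathrm{N}(\mathrm{M}_0)\) is an overconvergent \(\mathcal{K}\)-cosheaf, whose restriction to \(\mathrm{N}(\mathrm{M}'_{ao,1})\) is zero, and which is a left Kan extension of its restriction to \(\mathrm{N}(\mathrm{M}_0 \cup \mathrm{M}'_{ao,1})\). A dual trivial Kan fibration argument gives \(\mathcal{D}' \simeq \mathrm{OverCShv}_{\mathcal{K}}(X,\mathcal{C})\), which combined with the dual of Corollary 3.8 identifies \(\mathcal{D}' \simeq \mathrm{OverCShv}(X,\mathcal{C})\). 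The proof is then completed by establishing \(\mathcal{E}'_{\mathrm{over}} \simeq \mathcal{D}'\) as subcategories of \(\mathrm{Fun}(\mathrm{N}(\mathrm{M}'_{ao}),\mathcal{C})\).

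The main obstacle is this identification \(\mathcal{E}'_{\mathrm{over}} \simeq \mathcal{D}'\). One inclusion is given by the preceding paragraph; the reverse direction requires reconstructing a sheaf from the cosheaf data via \(\mathrm{F}(2, X\setminus K) \simeq \mathrm{cofib}(\mathrm{F}(0,K) \to \mathrm{F}(2,\emptyset))\) in the stable category \(\mathcal{C}\), and then checking that the resulting functor on \(\mathrm{N}(\mathrm{M}'_{ao,2})\) is a sheaf for the wide-open \(G\)-topology and satisfies the overconvergence Kan-extension condition. Quasi-compactness of \(X\) is used essentially throughout to control the combinatorics of the relevant admissible covers, and in particular to avoid the more delicate paracompactness arguments required in Subsection 4.3 for general \(X\).
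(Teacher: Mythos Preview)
Your overall architecture mirrors the paper's: define a common ambient category of functors on a poset of marked subsets, carve out the full subcategory corresponding to overconvergent sheaves (your $\mathcal{E}'_{\mathrm{over}}$, the paper's $\mathcal{D}$), carve out the dual one corresponding to overconvergent cosheaves (your $\mathcal{D}'$, the paper's condition (i) in Proposition~4.3), and show they coincide. Your verification that the restriction to $\mathrm{N}(\mathrm{M}_0)$ is an overconvergent $\mathcal{K}$-cosheaf is also essentially the paper's fiber-sequence argument.

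There is, however, a genuine gap. You write that ``one inclusion is given by the preceding paragraph,'' but that paragraph only shows that for $\mathrm{F}\in\mathcal{E}'_{\mathrm{over}}$ the restriction $\mathrm{F}|\mathrm{N}(\mathrm{M}_0)$ is an overconvergent $\mathcal{K}$-cosheaf. Membership in $\mathcal{D}'$ requires in addition that $\mathrm{F}$ be a \emph{left} Kan extension of $\mathrm{F}|\mathrm{N}(\mathrm{M}_0\cup\mathrm{M}'_{ao,1})$, and you have not addressed this at all. In the paper this is precisely the hard step: one must show, for each $(2,X\setminus K)$, that $\mathrm{F}(2,X\setminus K)$ is recovered as a colimit over the undercategory in $\mathrm{M}_0\cup\mathrm{M}_1$. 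The paper does this by introducing auxiliary subposets $\mathrm{M}''$ (pairs $(i,S)$ with $i\in\{0,1\}$ and $S$ quasi-compact admissible) and $\mathrm{B}$ (pairs $(2,X\setminus W)$ with $W=X\setminus K$ wide open), proving two cofinality-and-Kan-extension claims, and finally reducing to a single pushout square whose analysis uses that $(0,X)$, $(1,X)$, $(2,X)$ all lie in the poset with $\mathrm{F}(1,X)\simeq 0\simeq \mathrm{F}(2,X)$. This is exactly where quasi-compactness of $X$ enters, and it is not the ``combinatorics of admissible covers'' you allude to; it is a specific cofinality argument that your sketch does not contain.

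A second point of comparison: for the reverse inclusion you propose to reconstruct the sheaf via cofibers and then check the wide-open sheaf and overconvergence conditions by hand. The paper avoids this entirely by observing that the poset $\mathrm{M}$ carries an order-reversing involution $(i,S)\mapsto(2-i,X\setminus S)$ interchanging conditions (i) and (ii) upon passing to $\mathcal{C}^{op}$ (Lemma~4.4). Once you have established (ii)$\Rightarrow$(i) for arbitrary stable $\mathcal{C}$, the other implication is free. Your cofiber approach could presumably be made to work, but it duplicates effort that the symmetry argument eliminates.
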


The main tool in proving the theorem will be the following proposition:

\begin{prop}
The following conditions for a functor \(\mathrm{F}: \mathrm{N}(\mathrm{M}) \rightarrow \mathcal{C}\) are equivalent: (i) The restriction to \(\mathrm{N}(\mathrm{M}_0)\) determines an overconvergent \(\mathcal{K}\)-cosheaf, that to \(\mathrm{N}(\mathrm{M}_1)\) is zero, and \(\mathrm{F}\) is a left Kan extension of its restriction to \(\mathrm{N}(\mathrm{M}_0 \cup \mathrm{M}_1)\). (ii) The restriction to \(\mathrm{N}(\mathrm{M}_2)\) determines an overconvergent \(\mathcal{K}\)-sheaf, that to \(\mathrm{N}(\mathrm{M}_1)\) is zero, and \(\mathrm{F}\) is a right Kan extension of its restriction to \(\mathrm{N}(\mathrm{M}_1 \cup \mathrm{M}_2)\). 
\end{prop}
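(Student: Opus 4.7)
My plan is to prove the equivalence of (i) and (ii) by exploiting the stability of $\mathcal{C}$, which identifies fiber and cofiber sequences and thus lets me transport the sheaf-type conditions on $\mathrm{F}|_{\mathrm{M}_2}$ across the vanishing locus $\mathrm{F}|_{\mathrm{M}_1} = 0$ into cosheaf-type conditions on $\mathrm{F}|_{\mathrm{M}_0}$, and vice versa. I focus on (ii) $\Rightarrow$ (i); the reverse implication will follow by a symmetric argument once the forward direction is in place.

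Assuming (ii), the first step is to extract the fundamental Verdier fiber sequence
\[
\mathrm{F}(0, K) \longrightarrow \mathrm{F}(2, \emptyset) \longrightarrow \mathrm{F}(2, X \setminus K)
\]
for each quasi-compact admissible $K \subset X$. This follows from the RKE condition at $(0, K)$, the vanishing of $\mathrm{F}|_{\mathrm{M}_1}$, and an initiality argument for the three-object subdiagram $(2, \emptyset) \to (2, X \setminus K) \leftarrow (1, K)$ inside the relevant slice of $\mathrm{N}(\mathrm{M}_1 \cup \mathrm{M}_2)_{(0,K)/}$ --- the direct analogue for $\mathrm{M}$ of the initiality established in Section 4.1 for $\mathrm{M}'$. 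With this fiber formula in hand, $\mathcal{K}$-cosheaf descent for $\mathrm{F}|_{\mathrm{M}_0}$ and its overconvergence are deduced from the corresponding properties of $\mathrm{F}|_{\mathrm{M}_2}$ by the fiber-of-squares technique already used in Section 4.1: the $\mathcal{K}$-sheaf pullback square for $\mathrm{F}|_{\mathrm{M}_2}$ over $\{K \cup K', K, K', K \cap K'\}$, mapped from the constant square at $\mathrm{F}(2, \emptyset)$, yields on fibers a pullback square of the $\mathrm{F}(0, K'')$, which in the stable setting is also a pushout --- precisely the cosheaf codescent condition. Overconvergence follows analogously after unwinding the opposite-category definition of an overconvergent $\mathcal{K}$-cosheaf and applying the fiber formula levelwise, using that taking fibers commutes with the limits computing the Kan extension.

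The main obstacle is the remaining LKE condition: that $\mathrm{F}$ is a left Kan extension of $\mathrm{F}|\mathrm{N}(\mathrm{M}_0 \cup \mathrm{M}_1)$ at each $(2, X \setminus K) \in \mathrm{M}_2$. I plan to identify a cofinal subposet of the slice $(\mathrm{M}_0 \cup \mathrm{M}_1)_{/(2, X \setminus K)}$ on which the colimit of $\mathrm{F}$ reduces to an expression involving $\mathrm{F}(0, K)$, $\mathrm{F}(2, \emptyset)$ (recovered as a colimit of the $\mathrm{F}(0, K')$ for $K'$ quasi-compact admissible), and the zero values on $\mathrm{M}_1$, and then verify via the fiber sequence from step one that this colimit agrees with $\mathrm{cofib}(\mathrm{F}(0, K) \to \mathrm{F}(2, \emptyset)) \simeq \mathrm{F}(2, X \setminus K)$; the role of the vanishing at level one is precisely to supply the pushout cells needed, in the stable setting, to reconstruct the value at level two from level-zero data. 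The reverse implication (i) $\Rightarrow$ (ii) is then obtained by dualizing the entire argument: replacing fibers with cofibers, RKE with LKE, $\mathcal{K}$-sheaves with $\mathcal{K}$-cosheaves, and rerunning the cofinality analysis on the slice $(\mathrm{M}_1 \cup \mathrm{M}_2)_{(0,K)/}$ in place of its LKE-counterpart.
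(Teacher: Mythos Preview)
Your treatment of the fiber sequence $\mathrm{F}(0,K)\to\mathrm{F}(2,\emptyset)\to\mathrm{F}(2,X\setminus K)$, and your derivation of the $\mathcal{K}$-cosheaf and overconvergence properties for $\mathrm{F}|\mathrm{N}(\mathrm{M}_0)$ via fibers of squares, are exactly what the paper does. Likewise, your reduction of (i)$\Rightarrow$(ii) to (ii)$\Rightarrow$(i) by duality is correct: the paper formalizes this via the self-equivalence $\mathrm{Inv}:\mathrm{N}(\mathrm{M})\to\mathrm{N}(\mathrm{M})^{op}$, $(i,S)\mapsto(2-i,X\setminus S)$, together with passage to $\mathcal{C}^{op}$.

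The gap is in the LKE step, which is the heart of the matter. You propose to find a cofinal subposet of $(\mathrm{M}_0\cup\mathrm{M}_1)_{/(2,X\setminus K)}$ on which the colimit visibly becomes $\operatorname{cofib}\bigl(\mathrm{F}(0,K)\to\mathrm{F}(2,\emptyset)\bigr)$. There is no such simple cofinal subposet: for instance the span $(0,X)\leftarrow(0,\emptyset)\to(1,X\setminus K)$ fails cofinality at any $(0,K')$ with $K'\cap K=\emptyset$, $K'\neq\emptyset$, since then both $(0,X)$ and $(1,X\setminus K)$ lie above $(0,K')$ but are not connected. More seriously, your outline gives no role to the \emph{overconvergence} of $\mathrm{F}|\mathrm{N}(\mathrm{M}_2)$ in this step, and that hypothesis is essential. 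The paper's route is: extend $\mathrm{F}$ to $\mathrm{F}'$ on $\mathrm{N}(\mathrm{M}')$ by right Kan extension, and introduce the auxiliary poset $\mathrm{B}\subset\mathrm{M}'_2$ of pairs $(2,C)$ with $C$ quasi-compact admissible and $X\setminus C$ wide open. At such $(2,C)$ the span $(0,X)\leftarrow(0,C)\to(1,C)$ \emph{is} cofinal in $\mathrm{N}(\mathrm{M}'')_{/(2,C)}$, and the pushout square one obtains reduces (via quasi-compactness of $X$, giving $\mathrm{F}'(0,X)\simeq\mathrm{F}'(2,\emptyset)$) to the fiber sequence. Overconvergence of $\mathrm{F}|\mathrm{N}(\mathrm{M}_2)$ is then what lets one pass from $\mathrm{B}$ back to $\mathrm{M}_2$, together with a cofinality argument for $\mathrm{N}(\mathrm{B})_{/(2,X\setminus K)}\subset\mathrm{N}(\mathrm{B}\cup\mathrm{M}'')_{/(2,X\setminus K)}$ that relies on a genuine topological separation input: given disjoint quasi-compact admissibles $S$ and $K$, one must produce a wide open $W$ with $K\subset W$ and $W\cap S=\emptyset$. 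None of these ingredients --- the auxiliary poset $\mathrm{B}$, the second use of overconvergence, or the separation argument --- appear in your plan, and without them the LKE claim does not go through.
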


Given the proposition (which we prove by proving the below lemma and that condition (ii) implies (i)), the theorem is easy to prove. We simply will consider the full subcategory \(\mathcal{E} \subset \mathrm{Fun}(\mathrm{N}(\mathrm{M}),\mathcal{C})\) satisfying the equivalent conditions of the proposition. The restrictions to \(\mathrm{N}(\mathrm{M}_0)\) and \(\mathrm{N}(\mathrm{M}_2)\) define equivalences to the overconvergent sheaf and cosheaf categories (since they do to the overconvergent \(\mathcal{K}\)-sheaf/cosheaf categories). In the proof of our theorem, we define a certain full subcategory \(\mathcal{D} \subset \mathrm{Fun}(\mathrm{N}(\mathrm{M}'),\mathcal{C})\), which fits into a commutative diagram (vertical arrows are full subcategory inclusions, with horizontal ones the obvious restrictions).

\[
\begin{tikzcd}
\mathrm{OverCShv}_{\mathcal{K}}(X,\mathcal{C}) \arrow[d]   & \arrow[l] \mathcal{D} \arrow[r]\arrow[d] &\mathrm{OverShv}(X,\mathcal{C}) \arrow[d]\\
\mathrm{CShv}_{\mathcal{K}}(X,\mathcal{C})  & \arrow[l] \mathcal{E}'\arrow[r] & \mathrm{Shv}(X,\mathcal{C}) 
\end{tikzcd}
\]       

as well as (with all arrows the obvious restrictions except the left vertical, the identity)

  \[
\begin{tikzcd}
\mathrm{OverCShv}_{\mathcal{K}}(X,\mathcal{C}) \arrow[d]   & \arrow[l] \mathcal{D} \arrow[r]\arrow[d] &\mathrm{OverShv}(X,\mathcal{C}) \arrow[d]\\
\mathrm{OverCShv}_{\mathcal{K}}(X,\mathcal{C})  & \arrow[l] \mathcal{E}\arrow[r] & \mathrm{OverShv}_{\mathcal{K}}(X,\mathcal{C}) 
\end{tikzcd}
\]   

with the bottom two horizontal arrows of the latter diagram and the top right horizontal arrow equivalences (once we know the proposition). Also, the right vertical arrow of this diagram is an equivalence by by Corollary 3.8 of Subsection 3.2. Thus, the central vertical arrow of the latter diagram is an equivalence. Now, all arrows but the top left horizontal one of this diagram are known to be equivalences, whence it is, as well. 
\par
From the characterization of \(\mathcal{D}\), it is clear that the section 

\[s: \mathrm{Shv}(X,\mathcal{C})\rightarrow \mathcal{E}'\]

restricts to a functor

\[s: \mathrm{OverShv}(X,\mathcal{C})\rightarrow \mathcal{D},\]

whence the Verdier duality functor of the earlier subsection induces

\[ \mathrm{OverShv}(X,\mathcal{C})\subset \mathrm{Shv}(X,\mathcal{C})\xrightarrow{s} \mathcal{E}'\xrightarrow{\mathrm{restr}} \mathrm{CShv}_{\mathcal{K}}(X,\mathcal{C}) \xrightarrow{\mathrm{LKE}} \mathrm{CShv}(X,\mathcal{C}),\]

which induces a composition consisting solely of equivalences:

\[ \mathrm{OverShv}(X,\mathcal{C}) \xrightarrow{s} \mathcal{D}\xrightarrow{\mathrm{restr}} \mathrm{OverCShv}_{\mathcal{K}}(X,\mathcal{C}) \xrightarrow{\mathrm{LKE}} \mathrm{OverCShv}(X,\mathcal{C}).\]

\par
We now prove the following lemma.

\begin{lem}
If condition (ii) implies condition (i) for any stable \(\infty\)-category \(\mathcal{C}\), then (i) implies (ii).
\end{lem}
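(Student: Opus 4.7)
The plan is to exploit a ``self-duality'' of the indexing poset $\mathrm{M}$, combined with passage to the opposite $\infty$-category, so that the two conditions get exchanged under a single involutive construction.

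First I would introduce an order-reversing bijection
\[
\phi \colon \mathrm{M} \longrightarrow \mathrm{M}^{op}, \qquad (i,S) \longmapsto (2-i,\, X \setminus S).
\]
One checks directly that $\phi$ sends $\mathrm{M}_0$ bijectively to $\mathrm{M}_2$ (with its reverse order), sends $\mathrm{M}_2$ bijectively to $\mathrm{M}_0$, and preserves $\mathrm{M}_1$. The required order compatibility is immediate: $(i,S) \le (j,S')$ in $\mathrm{M}$ iff $i \le j$ and $S \subset S'$ iff $2-j \le 2-i$ and $X \setminus S' \subset X \setminus S$ iff $\phi(j,S') \le \phi(i,S)$ in $\mathrm{M}$. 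Thus $\phi$ is an isomorphism of posets from $\mathrm{M}$ to $\mathrm{M}^{op}$.

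Next, given a functor $\mathrm{F}\colon \mathrm{N}(\mathrm{M}) \to \mathcal{C}$ satisfying (i), I would form
\[
\tilde{\mathrm{F}} \;:=\; \mathrm{F}^{op} \circ \phi \colon \mathrm{N}(\mathrm{M}) \longrightarrow \mathcal{C}^{op},
\]
and check that $\tilde{\mathrm{F}}$ satisfies condition (ii) relative to the stable $\infty$-category $\mathcal{C}^{op}$ (which again has all small limits and colimits). For this, one uses the standard dualities: (a) a $\mathcal{K}$-cosheaf valued in $\mathcal{C}$ is by definition the same data as a $\mathcal{K}$-sheaf valued in $\mathcal{C}^{op}$, and the definition of overconvergence in Subsection 3.1 is phrased symmetrically so that this equivalence preserves overconvergence; (b) a zero object of $\mathcal{C}$ is also a zero object of $\mathcal{C}^{op}$; (c) a functor is a left Kan extension in $\mathcal{C}$ iff the opposite functor is a right Kan extension in $\mathcal{C}^{op}$. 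Combining these with the fact that $\phi$ swaps $\mathrm{M}_0$ with $\mathrm{M}_2$ and swaps $\mathrm{M}_0 \cup \mathrm{M}_1$ with $\mathrm{M}_1 \cup \mathrm{M}_2$, each of the three clauses of (i) for $\mathrm{F}$ translates into the corresponding clause of (ii) for $\tilde{\mathrm{F}}$ with target $\mathcal{C}^{op}$.

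I would then apply the hypothesis of the lemma to $\tilde{\mathrm{F}}$ and to the stable $\infty$-category $\mathcal{C}^{op}$, concluding that $\tilde{\mathrm{F}}$ satisfies (i). Translating back through $\phi$ and $(-)^{op}$ (both of which are involutions in the relevant sense, since $\phi \circ \phi = \mathrm{id}$ on $\mathrm{M}$ modulo the $\mathrm{M}/\mathrm{M}^{op}$ identification, and $(\mathcal{C}^{op})^{op} = \mathcal{C}$) gives condition (ii) for $\mathrm{F}$, as desired. The only real bookkeeping obstacle is the clean matching up of the Kan extension assertions across $\phi$ and the opposite-category duality; everything else reduces to verifying that $\phi$ restricts to the expected isomorphisms on $\mathrm{M}_0$, $\mathrm{M}_1$, $\mathrm{M}_2$, and on the two ``intermediate'' subposets $\mathrm{M}_0 \cup \mathrm{M}_1$ and $\mathrm{M}_1 \cup \mathrm{M}_2$.
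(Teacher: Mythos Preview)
Your proposal is correct and follows essentially the same route as the paper: both arguments use the order-reversing involution $(i,S)\mapsto(2-i,X\setminus S)$ on $\mathrm{M}$ (which the paper calls $\mathrm{Inv}$, your $\phi$) together with passage to $\mathcal{C}^{op}$ to interchange conditions (i) and (ii), then invoke the hypothesis for $\mathcal{C}^{op}$ and translate back. One small correction: your description of the partial order as ``$i\le j$ and $S\subset S'$'' omits the clause allowing $(0,S)\le(2,S')$ regardless of containment (this is what makes, e.g., $(0,C)\le(2,\emptyset)$ hold), but $\phi$ is still an order-reversing isomorphism under the correct definition, so your argument is unaffected.
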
 

\begin{proof}

 Suppose (i) is true of \(\mathrm{F}\in \mathrm{Fun}(\mathrm{N}(\mathrm{M}),\mathcal{C})\). Then, note that \(\mathrm{G}:=\mathrm{F}^{op}:\mathrm{N}(\mathrm{M})^{op}\rightarrow \mathcal{C}^{op}\) satisfies:  \(\mathrm{G}|\mathrm{N}(\mathrm{M}_0)^{op}\) is an overconvergent \(\mathcal{K}\)-sheaf valued in \(\mathcal{C}^{op}\), \(\mathrm{G}|\mathrm{N}(\mathrm{M}_1)^{op}\) is zero, and \(\mathrm{G}\) is a right Kan extension of the restriction to \(\mathrm{N}(\mathrm{M}_0 \cup \mathrm{M}_1)^{op}\). We note this heuristically corresponds to \(\mathrm{G}\) fulfilling the condition (ii), except for \(\mathcal{C}^{op}\) rather than \(\mathcal{C}\). More precisely, there is a natural functor (actually, an equivalence)

\[\mathrm{Inv}: \mathrm{N}(\mathrm{M})\rightarrow \mathrm{N}(\mathrm{M}^{op})\cong \mathrm{N}(\mathrm{M})^{op}\]

induced by the assignment \((i,S) \mapsto (2-i,X \setminus S)\), and \(\mathrm{G}\) fulfills the stated conditions if and only if \(\mathrm{G} \circ \mathrm{Inv}\) fulfills (ii) with respect to \(\mathcal{C}^{op}\) instead of \(\mathcal{C}\). But, since (ii) implies (i) for \itshape any \upshape target stable \(\infty\)-category, this means \(\mathrm{G}\circ \mathrm{Inv}\) satisfies (i), again for \(\mathcal{C}^{op}\) the target \(\infty\)-category instead of \(\mathcal{C}\). But this precisely amounts to \(\mathrm{G}\) satisfying (ii)', namely that \(\mathrm{G}|\mathrm{N}(\mathrm{M}_2)^{op}\) is an overconvergent \(\mathcal{K}\)-cosheaf, the restriction to \(\mathrm{N}(\mathrm{M}_1)^{op}\) is zero, and \(\mathrm{G}\) is a left Kan extension of \(\mathrm{G}|\mathrm{N}(\mathrm{M}_1 \cup \mathrm{M}_2)^{op}\). This in turn implies that \(\mathrm{G}^{op}=\mathrm{F}\) satisfies (ii), as desired. 

\end{proof}

The remainder of this section consists of proving the key proposition 4.3 that implies the sheaf-cosheaf equivalence theorem. 
\begin{proof}
 We will, appealing to the lemma, just demonstrate that (ii) implies (i). So, suppose (ii) holds.  We can extend a functor \(\mathrm{F} \in \mathcal{E}\) to \(\mathrm{F}': \mathrm{N}(\mathrm{M}') \rightarrow \mathcal{C}\) as follows. Let \(\mathcal{D}\) denote the full subcategory of \(\mathrm{Fun}(\mathrm{N}(\mathrm{M}'),\mathcal{C})\) consisting of functors satisfying: (I) the restriction to \(\mathrm{N}(\mathrm{M}_2)\) is an overconvergent \(\mathcal{K}\)-sheaf; (II) the restriction to \(\mathrm{N}(\mathrm{M}_2')\) is a right Kan extension of that to \(\mathrm{N}(\mathrm{M}_2)\); (III) the restriction to \(\mathrm{N}(\mathrm{M}_1')\) is zero; and finally, (IV) the functor is a right Kan extension of the restriction to \(\mathrm{N}(\mathrm{M}_1' \cup \mathrm{M}_2')\). Note the relationship between \(\mathcal{E}'\) and \(\mathcal{D}\) is that, where functors in \(\mathcal{D}\) have restriction to \(\mathrm{N}(\mathrm{M'}_2)\) that is an \itshape overconvergent \upshape sheaf (which is what (I) and (II) together amount to, those of \(\mathcal{E}'\) merely have corresponding restriction an ordinary sheaf. 
\par
We now observe that we can reconstrue (II) as the condition (II'): the restriction to \(\mathrm{N}(\mathrm{M}_2' \cup \mathrm{M}_1')\) is a right Kan extension of that to \(\mathrm{N}(\mathrm{M}_2 \cup \mathrm{M}_1)\). This is because, for any \((2,S) \in \mathrm{M}_2'\), it is impossible for any \((1,S')\) to satisfy \((2,S) \leq (1,S')\), whence the inclusion 

\[\mathrm{N}(\mathrm{M}_2)\subset \mathrm{N}(\mathrm{M}_2 \cup \mathrm{M}_1)\] induces an equivalence

\[\mathrm{N}(\mathrm{M}_2)\times_{\mathrm{N}(\mathrm{M}')} \mathrm{N}(\mathrm{M}')_{(2,S)/}\subset \mathrm{N}(\mathrm{M}_2 \cup \mathrm{M}_1)\times_{\mathrm{N}(\mathrm{M}')} \mathrm{N}(\mathrm{M}')_{(2,S)/}.\]
\par
This being accomplished, we can now also reconstrue condition (IV) as (IV'): \(\mathrm{F}'\) is a right Kan extension of its restriction to \(\mathrm{N}(\mathrm{M}_1 \cup \mathrm{M}_2)\).
\par
Therefore, to produce an object \(\mathrm{F}' \in \mathcal{D}\) from \(\mathrm{F} \in \mathcal{E}\), we simply need to take the restriction of \(\mathrm{F}\) to \(\mathrm{N}(\mathrm{M}_1 \cup \mathrm{M}_2)\) and right Kan extend it to \(\mathrm{N}(\mathrm{M}')\). We do this, starting with some \(\mathrm{F}\) satisfying condition (ii). The resulting \(\mathrm{F}'\) has restriction to \(\mathrm{N}(\mathrm{M})\) equivalent to \(\mathrm{F}\). It will be enough to demonstrate that \(\mathrm{F}'|\mathrm{N}(\mathrm{M})\) satisfies (i). 
\par
Now, note that \(\mathcal{D}\) is naturally a full subcategory of \(\mathcal{E}'\). We have a commutative diagram

\[
\begin{tikzcd}
\mathrm{OverCShv}_{\mathcal{K}}(X,\mathcal{C}) \arrow[d]   & \arrow[l] \mathcal{D} \arrow[r]\arrow[d] &\mathrm{OverShv}(X,\mathcal{C}) \arrow[d]\\
\mathrm{CShv}_{\mathcal{K}}(X,\mathcal{C})  & \arrow[l] \mathcal{E}'\arrow[r] & \mathrm{Shv}(X,\mathcal{C}) 
\end{tikzcd}
\]       

with the vertical arrows all full subcategory inclusions, and the others all the natural restriction maps. The only nontrivial aspect of this is demonstrate that the restriction of a functor from \(\mathcal{D}\) to \(\mathrm{N}(\mathrm{M}_0')\) lands in \(\mathrm{OverCShv}_{\mathcal{K}}(X,\mathcal{C})\), not just \(\mathrm{CShv}_{\mathcal{K}}(X,\mathcal{C})\). In particular, our goal at present is to consider a functor \(\mathrm{F} \in \mathcal{E}\), produce from it a functor \(\mathrm{F}' \in \mathcal{D}\) as described earlier, and show first that \(\mathrm{F}'|\mathrm{N}(\mathrm{M}_0')\) is an overconvergent \(\mathcal{K}\)-cosheaf, and that \(\mathrm{F}'\) is a left Kan extension of its restriction to \(\mathrm{N}(\mathrm{M}_0 \cup \mathrm{M}_1)\) at any \((2,X\setminus K) \in \mathrm{N}(\mathrm{M}_2)\). We first demonstrate the first of these. To do this, note that we can consider the map of fiber sequences associated to quasi-compact admissible \(K \subset X\), where \(\mathcal{F}\) is the restriction of \(\mathrm{F}'\) to \(\mathrm{N}(\mathrm{M}_2')\) and \(\mathcal{G}\) is the restriction to \(\mathrm{N}(\mathrm{M}_0')\).

\[
\begin{tikzcd}
\mathcal{G}(K) \arrow[d] \arrow[r] & \underset{K' \in \mathrm{N}(\mathcal{K}_{K \subset\subset}(X))}{\mathrm{lim}} \mathcal{G}(K') \arrow[d] \\
\mathcal{F}(X ) \arrow[d] \arrow[r]  &  \underset{K' \in \mathrm{N}(\mathcal{K}_{K \subset\subset}(X))}{\mathrm{lim}}  \mathcal{F}(X) \arrow[d]  \\
\mathcal{F}(X \setminus K) \arrow[r] & \underset{K' \in \mathrm{N}(\mathcal{K}_{K \subset\subset}(X))}{\mathrm{lim}}  \mathcal{F}(X \setminus K')
\end{tikzcd}
\]
The middle horizontal arrow is an equivalence since its right-hand limit is taken with respect to a weakly contractible nerve, and the lower horizontal arrow is an equivalence because \(\mathrm{M}(X) \setminus \mathrm{M}(K)\) is covered by the collection (closed under finite intersection) \(\left\{\mathrm{M}(X) \setminus \mathrm{M}(K')\right\}_{K' \in \mathcal{K}_{K \subset\subset}(X)}\), since such \(\mathrm{M}(K')\) constitute a fundamental system of compact neighborhoods of \(\mathrm{M}(K)\). Thus, the first horizontal arrow is also an equivalence. Thus, \(\mathcal{G}\) is in fact overconvergent.
\par
It remains to demonstrate that \(\mathrm{F}\) is a left Kan extension of its restriction to \(\mathrm{N}(\mathrm{M}_0 \cup \mathrm{M}_1)\). Define \(\mathrm{M}''\) to be the poset consisting of objects of form \((i,S)\) where \(i=0,1\), and \(S \subset X\) is quasi-compact and admissible. We note the restriction of \(\mathrm{F}\) to \(\mathrm{N}(\mathrm{M}_0 \cup \mathrm{M}_1)\) is a left Kan extension of the corresponding restriction to \(\mathrm{N}(\mathrm{M}'')\). This is obvious at any \((i,S)\) with \(S\) quasi-compact and admissible. If \(S\) is general, the collection of quasi-compact, admissible \(K\subset S\) is non-empty, since we can always take \(K=\emptyset\). Now, note that 

\[\mathrm{N}(\mathrm{M}''_1)_{/(1,S)} \subset \mathrm{N}(\mathrm{M}'')_{/(1,S)}\]

is cofinal (the index 1 at left means we restrict to the subposet of \(\mathrm{M}''\) consisting of elements of index 1), since for any \((i,K)\leq (1,S)\), we have \((i,K)\leq (1,K)\leq (1,S)\), and for any two \(K \subset K_1,K_2 \subset S\), we know that \(K\subset K_1 \cup K_2 \subset S\) as well. So, we just need to show \(\mathrm{F}\) is a left Kan extension of \(\mathrm{F}|\mathrm{N}(\mathrm{M}'')\). To do this, we introduce an auxiliary subposet \(\mathrm{B}\) of \(\mathrm{M}'\)consisting of pairs \((2,X \setminus W)\) with \(W = X \setminus K\) wide open in \(X\) and \(K\) quasi-compact admissible open of \(X\). The appendix demonstrates that the collection of all these  \(\mathrm{M}(W)\)  is a basis for the topology on \(\mathrm{M}(X)\).

To demonstrate that  \(\mathrm{F}\) is a left Kan extension of \(\mathrm{F}|\mathrm{N}(\mathrm{M}'')\), we will demonstrate two claims: (2) \( \mathrm{F}'| \mathrm{N}( \mathrm{M}'' \cup  \mathrm{B}) \) is a left Kan extension of the restriction to \( \mathrm{N}( \mathrm{M}'')\); (1) \( \mathrm{F}'| \mathrm{N}( \mathrm{M}'' \cup  \mathrm{M}_2 \cup  \mathrm{B})\) is a left Kan extension of the restriction to \( \mathrm{N}(\mathrm{M}'' \cup  \mathrm{B})\).
\par
For claim (1), note that \(\mathrm{F}'|\mathrm{N}(\mathrm{M}_2 \cup \mathrm{B})\) is a left Kan extension of \(\mathrm{F}'|\mathrm{N}(\mathrm{B})\). This is because, for any \((2,X \setminus K) \in \mathrm{M}_2\) (\(K\subset X\) arbitrary quasi-compact and admissible in \(X\)), the evident inclusion

\[\mathrm{N}(\mathrm{B})_{/(2,X\setminus K)} \subset \mathrm{N}(\left\{(2,X \setminus W): W \in \mathcal{W}(X)\right\})_{/(2,X\setminus K)}\]

is cofinal. We note that the proof will now be complete if, for any \((2,X \setminus K) \in \mathrm{M}_2\), the inclusion

\[\mathrm{N}(\mathrm{B})_{/(2,X\setminus K)} \subset \mathrm{N}(\mathrm{B} \cup \mathrm{M}'')_{/(2,X\setminus K)}\]

is cofinal. This is seen by showing that, for any \((i,S) \in \mathrm{M}''\) so that \((i,S) \leq (2,X\setminus K)\), the poset

\[\left\{(2,X\setminus W): W \in \mathrm{B}, (i,S)\leq(2,X\setminus W)\leq (2,X\setminus K)\right\}\]

is both nonempty and stable under finite unions. Note that, if \(W_1=X\setminus C_1,W_2=X\setminus C_2\), the intersection \(W_1 \cap W_2 = X\setminus (C_1 \cup C_2)\), sow the stability is demonstrated. So, it remains to see the nonemptiness claim. That is, given \((i,S)\leq (2,X\setminus K)\) with \(i=0,1\) and \(S\) quasi-compact and admissible, we must demonstrate that there is some \((2,X\setminus W) \in \mathrm{B}\) sitting between \((i,S)\leq (2,X\setminus W)\leq (2,X\setminus K)\). The second of these two inequalities amounts to the claim that \(K \subset W\). The first is trivial if \(i=0\), but for \(i=1\), amounts to the claim that \(W \subset X \setminus S\). But now, we see this is a standard claim that, for two disjoint (that is, \(S \subset X\setminus K\)) quasi-compact and admissible \(S,K\), we can find some \(W\) containing \(K\) but completely disjoint from \(S\). We can accomplish this by finding corresponding \(\mathrm{M}(W)\) so that \(\mathrm{M}(W)\cap \mathrm{M}(S)=\emptyset\), and additionally, \(\mathrm{M}(K) \subset \mathrm{M}(W)\). This now follows from elementary facts about locally compact, Hausdorff spaces together with the collection of \(\mathrm{M}(W)\) having the property proved in the Appendix, Proposition 5.3.  
\par
It now remains to demonstrate the claim (2). We consider some \((2,X\setminus W) \in \mathrm{B}\), where \(W\) is of form \(X\setminus C\) with \(C\) admissible and quasi-compact. We see that there is a cofinal inclusion of the diagram \((0,X) \leftarrow (0,X\setminus W)\rightarrow (1,X\setminus W)\) into \(\mathrm{N}(\mathrm{M}'')_{/(2,X\setminus W)}\). Hence, we are reduced to demonstrating that the diagram

\[
\begin{tikzcd}
\mathrm{F}'(0,X \setminus W)  \arrow[r]\arrow[d] & \mathrm{F}'(1,X\setminus W) \arrow[d] \\
\mathrm{F}'(0,X) \arrow[r] & \mathrm{F}'(2,X\setminus W)\\
\end{tikzcd}
\]

is a pushout (equivalently, a pullback). To do this, we consider the larger diagram

\[
\begin{tikzcd}
\mathrm{F}'(0,C) \arrow[d] \arrow[r] & \mathrm{F}'(1,C) \arrow[d] \\
\mathrm{F}'(0,X) \arrow[r] \arrow[d] & \mathrm{F}'(2,C) \arrow[d] \\
\mathrm{F}'(2,\emptyset) \arrow[r] &\mathrm{F}'(2,C) 
\end{tikzcd}
\]

and note that the outer square is automatically a pullback. Thus, it will suffice to demonstrate that the left, bottom vertical arrow is an equivalence. To do this, simply consider that this edge fits into the following pullback diagram:

\[
\begin{tikzcd}
\mathrm{F}'(0,X) \arrow[d] \arrow[r] & \mathrm{F}'(1,X) \arrow[d] \\
\mathrm{F}'(2,\emptyset) \arrow[r] & \mathrm{F}'(2,X) \\
\end{tikzcd}
\]

where both of the right-side objects are zero. Thus, the left, vertical edge is indeed an equivalence, as desired. 
\end{proof}

\subsection{Verdier Duality Equivalence for General Spaces}

This subsection will build the Verdier duality equivalence for non-quasi-compact spaces. At least for the author, this case demands more heavy appeal to the Berkovich topological space of \(X\), and the proof is more indirect in a sense, appealing to Lurie's theorem for the topological space \(\mathrm{M}(X)\), because we do not have quite as convenient an analogue of the generating wide opens \(X \setminus K\) given by complements of quasi-compact admissibles in this setting. We begin by giving a construction of the Verdier duality functor more directly involving \(\mathrm{M}(X)\).
\par
Define a poset category \(\mathrm{P}'\) as follows. The objects are pairs \((i,S)\) with \(i=0,1,2\) and \(S\) a subset of \(\mathrm{M}(X)\) with the following restrictions: for \(i=0\), \(S\) is of form \(\mathrm{M}(K)\) with \(K\subset X\) admissible open and quasi-compact. If \(i=2\), we demand that \(S=\mathrm{M}(X)\setminus \mathrm{M}(U)\), where \(U \in \mathcal{W}(X) \cup \mathcal{K}(X)\). The partial order is defined as usual: we have \(\leq\) relations \((i,S)\leq (i',S')\) if either \(i=0,i'=2\) or \(S \subset S'\). Notice that we can identify \(\mathrm{P}_2'\) with the opposite to \(\mathcal{W}(X) \cup \mathcal{K}(X)\), since \(\mathrm{M}(X) \setminus \mathrm{M}(U_1) \subset \mathrm{M}(X) \setminus \mathrm{M}(U_2)\) precisely if \(U_2 \subset U_1\). Define \(\mathcal{D}\subset \mathrm{Fun}(\mathrm{N}(\mathrm{P'}),\mathcal{C})\) to be the full subcategory of functors \(\mathrm{F}'\) satisfying: the restriction to \(\mathrm{N}(\mathrm{P}'_2)\cong \mathrm{N}(\mathcal{W}(X) \cup \mathcal{K}(X))^{op}\) belongs to \(\mathrm{EShv}_{wo}(X,\mathcal{C})\); the restriction to \(\mathrm{N}(\mathrm{P}_1')\) is zero; finally, \(\mathrm{F}'\) is a right Kan extension of its restriction to \(\mathrm{N}(\mathrm{P}_1' \cup \mathrm{P}_2')\). 
We now obtain a Verdier duality functor

\[\mathrm{OverShv}(X,\mathcal{C})\cong \mathrm{EShv}_{wo}(X,\mathcal{C})\cong \mathcal{D} \xrightarrow{\mathrm{restr}} \mathrm{OverCShv}_{\mathcal{K}}(X,\mathcal{C}) \xrightarrow{\mathrm{LKE}} \mathrm{OverCShv}(X,\mathcal{C})\]

\begin{thm}
The above functor \(\Gamma_c(-)\) is an equivalence.
\end{thm}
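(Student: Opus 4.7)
The plan is to reduce the statement to Lurie's Verdier duality theorem applied to the locally compact Hausdorff topological space \(\mathrm{M}(X)\), by showing that the constructed functor \(\Gamma_c(-)\) is naturally equivalent (under the chain of identifications already in hand) to Lurie's topological Verdier duality functor on \(\mathrm{M}(X)\). The key inputs are Proposition 3.7 and Corollary 3.8, which give \(\mathrm{OverShv}(X,\mathcal{C})\simeq \mathrm{Shv}(\mathrm{M}(X),\mathcal{C})\), together with their cosheaf counterparts obtained by passing to opposite categories: \(\mathrm{OverCShv}_{\mathcal{K}}(X,\mathcal{C})\simeq \mathrm{CShv}_{\mathcal{K}}(\mathrm{M}(X),\mathcal{C})\) and \(\mathrm{OverCShv}(X,\mathcal{C})\simeq \mathrm{CShv}(\mathrm{M}(X),\mathcal{C})\). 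In particular the final \(\mathrm{LKE}\) in the composition is an equivalence by the cosheaf analogue of Corollary 3.8, so it suffices to prove that the restriction \(\mathcal{D}\xrightarrow{\mathrm{restr}}\mathrm{OverCShv}_{\mathcal{K}}(X,\mathcal{C})\) is an equivalence.

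First I would introduce a parallel "topological" poset \(\mathrm{P}'_{top}\) consisting of pairs \((i,S)\) with \(S\subset \mathrm{M}(X)\): \(S\) compact for \(i=0\), \(S\) the complement of an arbitrary open for \(i=2\), and the same partial order as \(\mathrm{P}'\). Let \(\mathcal{D}_{top}\subset \mathrm{Fun}(\mathrm{N}(\mathrm{P}'_{top}),\mathcal{C})\) be the full subcategory with (I) restriction to \(\mathrm{N}(\mathrm{P}'_{top,2})\) an extended sheaf on \(\mathrm{M}(X)\), (II) restriction to level 1 zero, and (III) right Kan extension from levels 1 and 2. By the proof of Lurie's Verdier duality in HTT (and Subsection 4.2 applied to \(\mathrm{M}(X)\) in the topological case, or directly to Lurie's result), restriction defines equivalences \(\mathrm{Shv}(\mathrm{M}(X),\mathcal{C})\xleftarrow{} \mathcal{D}_{top}\xrightarrow{} \mathrm{CShv}_{\mathcal{K}}(\mathrm{M}(X),\mathcal{C})\), and moreover the composition (with the final \(\mathrm{LKE}\)) is Lurie's topological \(\Gamma_c\).

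Next I would build a restriction functor \(\mathcal{D}_{top}\to \mathcal{D}\) induced by the inclusion of posets \(\mathrm{N}(\mathrm{P}')\subset \mathrm{N}(\mathrm{P}'_{top})\) (where a pair \((2,\mathrm{M}(X)\setminus \mathrm{M}(U))\) with \(U\in \mathcal{W}(X)\cup\mathcal{K}(X)\) maps to the same pair in \(\mathrm{P}'_{top}\)), and show it is an equivalence. This requires three cofinality checks: that for any quasi-compact admissible \(K\), the inclusion \(\mathrm{N}(\mathcal{W}_{K\subset}(X)\cup \mathcal{K}_{K\subset\subset}(X))^{op}\subset \mathrm{N}(\mathrm{Opens}_{\mathrm{M}(K)\subset}(\mathrm{M}(X)))^{op}\) is cofinal (so that (II) at level 2 transfers); that the opens \(\mathrm{M}(U)\) with \(U\in\mathcal{W}(X)\cup\mathcal{K}(X)\) form a basis closed under finite intersection for \(\mathrm{M}(X)\) (so that (I) transfers via the basis characterization already used in Proposition 3.7); and that for each \((0,\mathrm{M}(K))\) the wedge of indices used in the right Kan extension (III) is cofinal in its topological analogue. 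All three are routine consequences of the appendix results on \(\mathrm{M}(-)\), the cofinality statements cited in the proof of Proposition 3.7, and the definition of "inner." Once \(\mathcal{D}_{top}\simeq \mathcal{D}\) is established, a diagram chase showing the square
\[
\begin{tikzcd}
\mathcal{D}_{top}\arrow[d,"\simeq"]\arrow[r,"\mathrm{restr}"] & \mathrm{CShv}_{\mathcal{K}}(\mathrm{M}(X),\mathcal{C})\arrow[d,"\simeq"]\\
\mathcal{D}\arrow[r,"\mathrm{restr}"] & \mathrm{OverCShv}_{\mathcal{K}}(X,\mathcal{C})
\end{tikzcd}
\]
commutes (immediate from the definitions of the vertical equivalences as restrictions) forces the bottom horizontal arrow to be an equivalence, and the theorem follows upon postcomposing with the \(\mathrm{LKE}\).

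The main obstacle I anticipate is the check that \(\mathcal{D}_{top}\to \mathcal{D}\) is an equivalence. The subtlety is that at level 2 one is restricting from \emph{all} opens in \(\mathrm{M}(X)\) to only those of the form \(\mathrm{M}(U)\) with \(U\in \mathcal{W}(X)\cup\mathcal{K}(X)\); showing this loses no information relies crucially on the basis property of wide opens and on the fact (from the appendix) that \(\mathrm{M}(W\cap W')=\mathrm{M}(W)\cap \mathrm{M}(W')\) so this basis is closed under finite intersections. The cofinality for the right Kan extension step (III) is similar in spirit to arguments already carried out in the proof of Proposition 3.7, so once those are set up carefully no new ideas are needed.
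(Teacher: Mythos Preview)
Your proposal is correct and follows essentially the same strategy as the paper: introduce Lurie's topological category \(\mathcal{D}_{\mathrm{M}(X)}\) (your \(\mathcal{D}_{top}\)), build the restriction \(\mathcal{D}_{\mathrm{M}(X)}\to\mathcal{D}\) along \(\mathrm{N}(\mathrm{P}')\subset\mathrm{N}(\mathrm{M}'_{\mathrm{M}(X)})\), and deduce the bottom restriction \(\mathcal{D}\to\mathrm{OverCShv}_{\mathcal{K}}(X,\mathcal{C})\) is an equivalence from Lurie's theorem together with the Proposition~3.7 identifications.

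The one organizational difference worth noting: you propose to prove \(\mathcal{D}_{top}\to\mathcal{D}\) is an equivalence \emph{directly} via three cofinality checks, whereas the paper gets it for free by a two-out-of-three argument. The paper assembles the larger commutative diagram
\[
\begin{tikzcd}
\mathrm{CShv}_{\mathcal{K}}(\mathrm{M}(X),\mathcal{C}) \arrow[d] & \arrow[l]\arrow[d] \mathcal{D}_{\mathrm{M}(X)} \arrow[r] & \mathrm{EShv}(\mathrm{M}(X),\mathcal{C}) \arrow[d] \\
\mathrm{OverCShv}_{\mathcal{K}}(X,\mathcal{C}) & \arrow[l] \mathcal{D} \arrow[r] & \mathrm{EShv}_{wo}(X,\mathcal{C})
\end{tikzcd}
\]
and observes that in the right-hand square the two horizontal arrows (by construction) and the right vertical arrow (Proposition~3.7) are already known equivalences, so the middle vertical is one too; then the left square finishes as in your argument. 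This is quicker than your route, since it recycles Proposition~3.7 as a black box rather than reproving the relevant cofinalities inside the comparison of \(\mathcal{D}\)'s. Your cofinality checks would go through, but they amount to re-deriving pieces of Proposition~3.7 in a slightly different packaging.
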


\begin{proof}
We will refer to the constructions \(\mathcal{D},\mathcal{E}\) in Lurie's proof of Verdier duality, specifically for the locally compact Hausdorff space \(\mathrm{M}(X)\). We will use the notation \(\mathcal{D}_{\mathrm{M}(X)},\mathcal{E}_{\mathrm{M}(X)}\) defined as functors out of \(\mathrm{N}(\mathrm{M}_{\mathrm{M}(X)}')\) and \(\mathrm{N}(\mathrm{M}_{\mathrm{M}(X)})\) to avoid confusion with the notions we defined in the rigid analytic (as opposed to topological) setting. Note that restriction along

\[\mathrm{N}(\mathrm{P}') \subset \mathrm{N}(\mathrm{M}_{\mathrm{M}(X)}')\]

induces a functor

\[\mathcal{D}_{\mathrm{M}(X)} \rightarrow \mathcal{D}\]

Indeed, any functor \(\mathrm{F}'\) at left restricts to \(\mathrm{N}(\mathrm{P}_2')\) to be an extended wide open sheaf, and to zero at \(\mathrm{N}(\mathrm{P}_1')\). Further, the restriction to \(\mathrm{N}(\mathrm{P}')\) is a right Kan extension of that to \(\mathrm{N}(\mathrm{P}_1' \cup \mathrm{P}_2')\) because this condition amounts to, for \(K \subset X\) quasi-compact admissible, the square

\[
\begin{tikzcd}
\mathrm{F}'(0,\mathrm{M}(K)) \arrow[d] \arrow[r]  & \mathrm{F}'(1,\mathrm{M}(K))\arrow[d]  \\
\mathrm{F}'(2,\emptyset) \arrow[r] & \mathrm{F}'(2,\mathrm{M}(K))
\end{tikzcd}
\]

being a pullback, which follows from \(\mathrm{F}'\) being an object of \(\mathcal{D}_{\mathrm{M}(X)}\). (Note that,  here, we have used that \((2,\mathrm{M}(K)) \in \mathrm{P}'_2\), which uses that \(\mathrm{M}(K)=\mathrm{M}(X)\setminus (\mathrm{M}(X)\setminus \mathrm{M}(K)) = \mathrm{M}(X)\setminus \mathrm{M}(X\setminus K)\), since \(\mathrm{M}(X)\setminus \mathrm{M}(X\setminus K)\) is of form \(\mathrm{M}(X)\setminus \mathrm{M}(U)\), where \(U=X\setminus K\) is admissible open). This means that we have a commutative diagram

\[
\begin{tikzcd}
\mathrm{CShv}_{\mathcal{K}}(\mathrm{M}(X),\mathcal{C}) \arrow[d]   &\arrow[l] \arrow[d] \mathcal{D}_{\mathrm{M}(X)} \arrow[r] & \mathrm{EShv}(\mathrm{M}(X),\mathcal{C}) \arrow[d]  \\
\mathrm{OverCShv}_{\mathcal{K}}(X,\mathcal{C}) & \arrow[l]  \mathcal{D} \arrow[r] & \mathrm{EShv}_{wo}(X,\mathcal{C})
\end{tikzcd}
\]

Among the arrows belonging to the right square, all but the central vertical one are known to be equivalences. Thus, the central one is, too. Now, among all arrows belonging to the left square, all but the bottom left are known to be equivalences. Hence, the bottom left one is, as well. Obviously, left Kan extension induces an equivalence

\[\mathrm{OverCShv}_{\mathcal{K}}(X,\mathcal{C})\xrightarrow{\mathrm{LKE}} \mathrm{OverCShv}(X,\mathcal{C}).\]

So, we do indeed have that our original duality functor is an equivalence.

\end{proof}

\subsection{Extraordinary Direct and Inverse Images}

In this subsection, we explain how the Verdier duality equivalence we have built in terms of equivalences between categories of sheaves and cosheaves relates to the more traditional story involving existence of dualizing sheaves and extraordinary inverse images. First, we define the direct image with quasi-compact supports functor. We will denote by \((f_+,f^+)\) the adjoint pair of the direct image and pullback of rigid analytic \(\mathcal{C}\)-valued cosheaves respectively (note that the pullback is a \itshape right \upshape adjoint to direct image).

\begin{defin}
Let \(f:X\rightarrow Y\) be a map of rigid analytic spaces so that \(f_*\) preserves overconvergence of sheaves, and \(f_+\) preserves overconvergence of cosheaves. Define the \itshape direct image with quasi-compact supports \upshape functor, denoted 

\[f_!: \mathrm{OverShv}(X,\mathcal{C})\rightarrow \mathrm{OverCShv}(Y,\mathcal{C})\]

to be the composition

\[\mathrm{OverShv}(X,\mathcal{C})\xrightarrow{\Gamma_c(-)} \mathrm{OverCShv}(X,\mathcal{C}) \xrightarrow{f_+} \mathrm{OverCShv}(Y,\mathcal{C}) \xrightarrow{\Gamma_c(-)^{-1}} \mathrm{OverShv}(Y,\mathcal{C}).\]

\end{defin}

 We have the following theorem:

\begin{thm}
Let \(f: X \rightarrow Y\) be a map of rigid analytic spaces, so that the adjoint pair \((f_*,f^*)\) of functors between \(\mathrm{Shv}(X,\mathcal{C})\) and \(\mathrm{Shv}(Y,\mathcal{C})\) preserve overconvergence (and likewise for overconvergent cosheaves). Then, the functor \(f_!\) admits a right adjoint \(f^!\).

\end{thm}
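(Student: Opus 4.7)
The plan is to exhibit $f^!$ as a composition of three right adjoints, one for each factor in the definition of $f_!$. By construction,
\[
f_! \;=\; (\Gamma_c^Y)^{-1} \circ f_+ \circ \Gamma_c^X,
\]
and both $\Gamma_c^X$ and $\Gamma_c^Y$ are equivalences by the Verdier duality theorem of the preceding subsection, so each admits a right adjoint (namely its inverse). It therefore suffices to produce a right adjoint to the middle arrow $f_+: \mathrm{OverCShv}(X,\mathcal{C}) \to \mathrm{OverCShv}(Y,\mathcal{C})$; once this is in hand, setting
\[
f^! \;:=\; (\Gamma_c^X)^{-1} \circ f^+ \circ \Gamma_c^Y
\]
and invoking the fact that right adjoints compose in reverse order immediately yields $f_! \dashv f^!$.

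For the middle adjoint, recall that the cosheaf pushforward $f_+: \mathrm{CShv}(X,\mathcal{C}) \to \mathrm{CShv}(Y,\mathcal{C})$ is colimit-preserving between presentable stable $\infty$-categories and hence admits a right adjoint $f^+$; this is the adjoint pair $(f_+, f^+)$ named in the hypothesis. The running assumption on $f$ says exactly that both $f_+$ and $f^+$ preserve overconvergence, so the ambient adjunction $f_+ \dashv f^+$ restricts to the full subcategories of overconvergent cosheaves. This restriction step is formal: since $\mathrm{OverCShv}(-,\mathcal{C}) \subset \mathrm{CShv}(-,\mathcal{C})$ is a full subcategory inclusion and both adjoints preserve it, the unit and counit of the ambient adjunction take values objectwise in these subcategories, and the triangle identities are inherited from the ambient ones.

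Combining these ingredients, the composite $f^!$ displayed above is the desired right adjoint to $f_!$. The entire conceptual load has been absorbed into the hypothesis on $f$: absent the assumption that $f^+$ preserves overconvergence, one would need to construct $f^!$ intrinsically (for instance, via a dualizing overconvergent sheaf $\omega_{X/Y} := f^!(\omega_Y)$ together with an internal-Hom formula and suitable geometric properness conditions on $f$), and isolating the class of morphisms for which the preservation property holds is where the genuine geometric content would reside. Within the stated hypotheses, however, the argument reduces to stringing together three adjunctions, with the sheaf--cosheaf equivalence doing all the heavy lifting.
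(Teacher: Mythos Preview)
Your proposal is correct and matches the paper's proof essentially verbatim: both define $f^!$ as $(\Gamma_c^X)^{-1}\circ f^+\circ\Gamma_c^Y$ and verify the adjunction by composing the three constituent adjunctions (the paper writes out the chain of mapping-space equivalences explicitly, which is the same content). One small quibble: your appeal to presentability of $\mathrm{CShv}(X,\mathcal{C})$ to justify the existence of $f^+$ is both unnecessary and not obviously licensed by the paper's standing hypotheses on $\mathcal{C}$ (stable with small limits and colimits, not presentable)---but this is harmless, since the paper simply takes the adjoint pair $(f_+,f^+)$ as given in the setup preceding the definition of $f_!$, and the theorem's hypothesis already presupposes it.
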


\begin{proof}
First, we specify what the functor \(f^!: \mathrm{OverShv}(Y,\mathcal{C})\rightarrow \mathrm{OverShv}(X,\mathcal{C})\) is precisely, and then explain why it is the desired right adjoint. The functor itself is simply given by the composition

\[\mathrm{OverShv}(Y,\mathcal{C})\xrightarrow{\Gamma_c(-)} \mathrm{OverCShv}(Y,\mathcal{C}) \xrightarrow{f^+} \mathrm{OverCShv}(X,\mathcal{C}) \xrightarrow{\Gamma_c(-)^{-1}} \mathrm{OverShv}(X,\mathcal{C}).\]

It yields the desired right adjoint by the following. Let \(\mathcal{F} \in \mathrm{OverShv}(X,\mathcal{C}), \mathcal{G} \in \mathrm{OverShv}(Y,\mathcal{C})\). We now have natural identifications

\[\mathrm{Maps}_{\mathrm{OverShv(Y,\mathcal{C})}}(f_!\mathcal{F},\mathcal{G})\]

\[\cong \mathrm{Maps}_{\mathrm{OverShv(Y,\mathcal{C})}}(\Gamma_c^{-1}(f_+\Gamma_c(\mathcal{F})),\mathcal{G}) \]

\[\cong \mathrm{Maps}_{\mathrm{OverCShv(Y,\mathcal{C})}}(f_+\Gamma_c(\mathcal{F}),\Gamma_c(\mathcal{G}))\]
\[\cong \mathrm{Maps}_{\mathrm{OverCShv(X,\mathcal{C})}}(\Gamma_c(\mathcal{F}),f^+\Gamma_c(\mathcal{G}))\]

\[\cong \mathrm{Maps}_{\mathrm{OverShv(X,\mathcal{C})}}(\mathcal{F},\Gamma_c^{-1}(f^+\Gamma_c(\mathcal{G})))\]

\[\cong \mathrm{Maps}_{\mathrm{OverShv(X,\mathcal{C})}}(\mathcal{F},f^!\mathcal{G}).\]
This completes the proof.

\end{proof}

\section{Appendix on the Berkovich topological space \(\mathrm{M}(X)\)}

This section collects some elementary properties of the topological space \(\mathrm{M}(X)\) associated in Berkovich theory to a rigid analytic space \(X\), particularly the relations between \(X\) and \(\mathrm{M}(X)\). Our treatment of \(\mathrm{M}(X)\), when explicit about its construction, will refer to the one based on (maximal) filters built from the collection of admissible opens of \(X\). This section does not claim any originality and mainly just collects facts we need in a convenient place, as sometimes the author found a reference difficult to find. For this paper, we will always, for \(U \subset X\) admissible open, denote by \(\mathrm{M}(U)\) the subset of \(\mathrm{M}(X)\) given by those maximal filters containing \(U\). This is not to be confused with the slightly different denotation, where \(U\) is considered as a rigid analytic space of its own right, rather than as a subspace of \(X\), where we could take \(\mathrm{M}(U)\) to be the maximal filters for the admissible opens of \(U\).

\begin{prop}

 Let \(U,V\) be admissible opens of \(X\). Then, \(\mathrm{M}(U) \subset \mathrm{M}(V)\) if and only if \( U \subset V\). That is, \(\mathrm{M}(-)\) is compatible with inclusions of admissible opens.

\end{prop}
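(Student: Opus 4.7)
The plan is to prove the proposition by treating the two directions separately, with the forward direction being essentially immediate from the filter axioms, and the reverse direction reducing to the well-known fact that admissible opens in a rigid analytic space are determined by their classical (rigid) points.

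For the direction $U \subset V \Rightarrow \mathrm{M}(U) \subset \mathrm{M}(V)$, I will simply unpack the definitions: a point of $\mathrm{M}(U)$ is a maximal filter $\mathfrak{p}$ on the collection of admissible opens of $X$ with $U \in \mathfrak{p}$. Since a filter is upward closed under the inclusion order and $U \subset V$ is an admissible inclusion of admissible opens of $X$, the axioms force $V \in \mathfrak{p}$, so $\mathfrak{p} \in \mathrm{M}(V)$. This is a one-line verification, and the only subtlety is to invoke that $V$ is admissible open in $X$, so it makes sense as an element of the ambient filter.

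For the reverse direction, I will argue by contrapositive: suppose $U \not\subset V$. The hard (but standard) input is that an admissible open of a rigid analytic space is determined by its set of classical points, so there exists a classical point $x \in U$ which is not in $V$. Such a classical point $x$ yields a maximal filter $\mathfrak{p}_x := \{W \in \mathcal{U}(X) : x \in W\}$: this is a filter because admissible opens are closed under finite intersections containing $x$, and its maximality among filters comes from the fact that for any admissible open $W'$ with $x \notin W'$, the pair $W', W$ (for any $W \ni x$) has empty intersection upon which no filter extension exists. Then $U \in \mathfrak{p}_x$ while $V \notin \mathfrak{p}_x$, so $\mathfrak{p}_x \in \mathrm{M}(U) \setminus \mathrm{M}(V)$, contradicting the hypothesis $\mathrm{M}(U) \subset \mathrm{M}(V)$.

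The main obstacle (though small) is simply to confirm that the filter associated to a classical point really is a point of the Berkovich space in the filter-theoretic construction the author is using, i.e.\ that $\mathfrak{p}_x$ is in fact maximal and not merely a filter. This is standard and is recorded in Schneider--van der Put, so the proof really amounts to citing the existence of enough classical points to separate distinct admissible opens. I would phrase the write-up to make this citation explicit and to keep the filter-theoretic manipulations short.
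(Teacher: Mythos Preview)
Your overall strategy matches the paper's: the forward direction is the one-line filter-enlargement argument, and for the reverse direction you pick a classical point $x\in U\setminus V$ and use its neighborhood filter $\mathfrak{p}_x=\{W\in\mathcal{U}(X):x\in W\}$ to exhibit an element of $\mathrm{M}(U)\setminus\mathrm{M}(V)$. This is exactly what the paper does.

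However, your justification of maximality of $\mathfrak{p}_x$ is not correct as stated. You write that ``for any admissible open $W'$ with $x\notin W'$, the pair $W',W$ (for any $W\ni x$) has empty intersection.'' This is false: an admissible open $W'$ not containing $x$ can certainly meet admissible neighborhoods of $x$; there is no reason a general admissible open should be ``closed'' in any sense that would separate it from $x$. What you actually need is that \emph{some} $W\in\mathfrak{p}_x$ satisfies $W\cap W'=\emptyset$, and this does not come for free. The paper's argument handles this more carefully: given a proper filter extension $\mathrm{G}_x\supsetneq\mathfrak{p}_x$ containing some $V_x\not\ni x$, one uses that maximal filters are prime to find an \emph{affinoid} $\mathrm{Sp}(A)\subset V_x$ lying in $\mathrm{G}_x$; then separatedness of $X$ guarantees that $X\setminus\mathrm{Sp}(A)$ is admissible open, and since it contains $x$ it lies in $\mathfrak{p}_x\subset\mathrm{G}_x$, forcing $\emptyset\in\mathrm{G}_x$. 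The passage through an affinoid and the use of separatedness are the genuine content here. If you prefer to cite Schneider--van der Put for the maximality of $\mathfrak{p}_x$, that is fine, but then you should drop the incorrect sketch rather than leave it as stated.
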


\begin{proof}
Suppose \(U \subset V\). Let \(\mathrm{F} \in \mathrm{M}(U)\). Since \(\mathrm{F}\) is closed under enlargement, \(\mathrm{F}\) contains \(V\). Now, for the other direction, suppose there is some \(x \in U\) so that \(x \notin V\). Denote by \(\mathrm{F}_x\) the collection of admissible opens \(U\) of \(X\) so \(x \in U\). We claim this is a maximal filter, first of all. It is a filter, as (all \(U_i\) being admissible opens): \(x \notin \emptyset\); \(x \in X\); \(x \in U_1,U_2\) implies \(x \in U_1 \cap U_2\); \(x \in U_1,U_1 \subset U_2\) implies \(x \in U_2\). To show it is maximal, let us note that, if not, there exists some maximal filter \(\mathrm{F}_x \subset \mathrm{G}_x\), where the containment is proper. Thus, there is an admissible open \(V_x \in \mathrm{G}_x\) so that \(x \notin V_x\). Note that, because \(\mathrm{G}_x\) is prime, there must be some admissible affinoid \(Sp(A) \subset V_x\) so that \(\mathrm{G}_x\) contains \(Sp(A)\). Note that, since \(x \notin Sp(A)\), \(x \in X\setminus Sp(A)\). Thus, \(X \setminus Sp(A) \in \mathrm{F}_x\subset \mathrm{G}_x\). Thus, \((X\setminus Sp(A)) \cap Sp(A) = \emptyset \in \mathrm{G}_x\). Thus, \(\mathrm{G}_x\) as described cannot exist, and \(\mathrm{F}_x\) is maximal. Now, the proof is easy to complete. It is clear that \(U \in \mathrm{F}_x\) but \(V \notin \mathrm{F}_x\), so \(\mathrm{M}(U) \) cannot be contained in \(\mathrm{M}(V)\), as desired. 
\end{proof}

We call the filter \(\mathrm{F}_x\) the \itshape neighborhood filter \upshape of \(x\).
One consequence of our result is that, if \(\mathrm{M}(U)=\mathrm{M}(V)\) with \(U,V\) admissible opens of \(X\), then \(U=V\). Indeed, this is because \(\mathrm{M}(U) \subset \mathrm{M}(V)\) implies \(U \subset V\), and \(\mathrm{M}(V) \subset \mathrm{M}(U)\) implies \(V \subset U\). A consequence of this consequence is that, if \(U\) is an admissible open so that \(\mathrm{M}(U) = \emptyset\), \(U=\emptyset\). This is because \(\mathrm{M}(\emptyset)=\emptyset\), since no filter can contain the empty subset, and so \(U = \emptyset\).

Another consequence of the result is that, for any rigid analytic space \(X\), defining \(\mathcal{U}^{top}(X)\) to be the poset of all subsets of \(\mathrm{M}(X)\) of form \(\mathrm{M}(U)\), there is a natural equivalence

\[\mathrm{N}(\mathcal{U}(X))\cong \mathrm{N}(\mathcal{U}^{top}(X))\]

given on objects by \(U \mapsto \mathrm{M}(U)\) and \(U \subset V\) goes to \(\mathrm{M}(U) \subset \mathrm{M}(V)\). We will frequently exploit this sort of functor given by an assignment \(U \mapsto \mathrm{M}(U)\) in various contexts (sometimes, in context of dealing only with wide admissible opens
 or quasi-compact admissible opens).

There are also two fairly simple properties we quickly record: (1) \(\mathrm{M}(-)\) sends admissible coverings of admissible opens to coverings. Indeed, given an admissible covering \(\left\{U_i\right\}_{i \in I}\) of \(U\), a maximal filter in \(\mathrm{M}(X)\) contained in \(\mathrm{M}(U)\) contains \(U\). Since a maximal filter is prime, it must must also contain some \(U_i\), which gives one direction. 
In the other direction, if a maximal filter contains some \(U_i\), by closure under enlargement, it must contain \(U\). 
(2) \(\mathrm{M}(-)\) is compatible finite intersections. This follows immediately from the closure of filters under finite intersection and enlargement. 
\par
We now turn our attention to quasi-compact admissible opens. Note, first, that if \(K \subset X\) is quasi-compact and admissible, \(\mathrm{M}(K) \subset \mathrm{M}(X)\) is a compact subspace. It is well known that \(\mathrm{M}(K_i)\) is for any open affinoid \(K_i\) of \(K\) (for instance, Schneider/van der Put prove that such \(\mathrm{M}(K_i)\) is homeomorphic to the Berkovich topological space associated to the corresponding affinoid algebra, which is compact and Hausdorff), and so this follows from the above compatibility with unions: if \(K=K_1\cup \cdots \cup K_n\), \(\mathrm{M}(K) = \mathrm{M}(K_1)\cup \cdots \cup \mathrm{M}(K_n)\)
\begin{prop}
Let \(K \subset X\) be a quasi-compact admissible open. Then, \(\mathrm{M}(X \setminus K) = \mathrm{M}(X) \setminus \mathrm{M}(K)\).
\end{prop}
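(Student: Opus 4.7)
The plan is to establish both inclusions directly from the definition of $\mathrm{M}(X)$ as the space of maximal filters on the poset $\mathcal{U}(X)$ of admissible opens, using only the filter axioms. By Remark 2.3, separatedness of $X$ ensures $X\setminus K\in\mathcal{U}(X)$, so $\mathrm{M}(X\setminus K)$ is in fact defined. For the inclusion $\mathrm{M}(X\setminus K)\subset \mathrm{M}(X)\setminus\mathrm{M}(K)$, the observation $K\cap(X\setminus K)=\emptyset$ suffices: if a maximal filter $\mathrm{F}$ contained both $K$ and $X\setminus K$, closure under finite intersection would force $\emptyset\in\mathrm{F}$, contradicting properness. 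This is the same empty-intersection trick used near the end of the proof of Proposition 5.1.

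For the reverse inclusion $\mathrm{M}(X)\setminus\mathrm{M}(K)\subset\mathrm{M}(X\setminus K)$, I propose to use maximality of $\mathrm{F}$ directly. Given $\mathrm{F}\in\mathrm{M}(X)$ with $K\notin\mathrm{F}$, I consider the collection $\mathrm{F}\cup\{X\setminus K\}$ and examine the filter on $\mathcal{U}(X)$ it generates. The key claim is that this generated filter is still proper; by maximality of $\mathrm{F}$ it must then coincide with $\mathrm{F}$, forcing $X\setminus K\in\mathrm{F}$. Properness reduces to showing no finite intersection of elements is empty, and by closure of $\mathrm{F}$ itself under finite intersection this further reduces to verifying $V\cap(X\setminus K)\neq\emptyset$ for each $V\in\mathrm{F}$. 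But $V\cap(X\setminus K)=\emptyset$ would mean $V\subset K$, and then closure of $\mathrm{F}$ under enlargement would yield $K\in\mathrm{F}$, contradicting the hypothesis.

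I do not anticipate any substantive obstacle; both inclusions are essentially formal consequences of the filter axioms together with Remark 2.3. One point worth being explicit about is that this approach entirely bypasses the need to verify directly that $\{K,X\setminus K\}$ is an admissible covering of $X$, or to invoke primality of maximal filters with respect to admissible covers; only closure of $\mathrm{F}$ under finite intersection and enlargement, together with its maximality as an abstract filter on $\mathcal{U}(X)$, are used. This keeps the proof parallel in style to Proposition 5.1 and avoids circularity with the subsequent covering-type statements in the Appendix.
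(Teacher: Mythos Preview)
Your proof is correct, and the reverse inclusion is handled by a genuinely different argument than the paper's. The paper proceeds topologically: it uses that $\mathrm{M}(K)$ is compact (hence closed in the Hausdorff space $\mathrm{M}(X)$), so $\mathrm{M}(X)\setminus\mathrm{M}(K)$ is open, and then invokes the Schneider--van der Put fact that every point has a fundamental system of compact neighborhoods of the form $\mathrm{M}(C)$ with $C$ quasi-compact admissible; choosing such $\mathrm{M}(C)\subset\mathrm{M}(X)\setminus\mathrm{M}(K)$ forces $C\cap K=\emptyset$, hence $C\subset X\setminus K$, and enlargement finishes. Your argument instead runs the classical ultrafilter dichotomy directly on $\mathcal{U}(X)$: adjoin $X\setminus K$ to $\mathrm{F}$, observe that properness of the generated filter reduces to $V\not\subset K$ for all $V\in\mathrm{F}$, and conclude by maximality. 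This is strictly more elementary---it uses no topology on $\mathrm{M}(X)$, no compactness, and no neighborhood basis result---and it makes transparent that the only input beyond the filter axioms is that $X\setminus K$ is itself admissible (Remark~2.3). The paper's route, while heavier, has the minor expository advantage of foreshadowing the compact-neighborhood machinery used throughout the remainder of the Appendix.
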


\begin{proof}
The direction \(\mathrm{M}(X \setminus K) \subset \mathrm{M}(X) \setminus \mathrm{M}(K)\) is easy: if \(\mathrm{F}\) is a maximal filter containing \(X\setminus K\), if it in addition contained \(K\), it would have to contain the empty set, which shows \(\mathrm{F}\in \mathrm{M}(X)\setminus \mathrm{M}(K)\). For the reverse direction, let \(x \in \mathrm{M}(X) \setminus \mathrm{M}(K)\). This complement is an open subset of \(\mathrm{M}(X)\), so there exists some \(x \in \mathrm{M}(C) \subset \mathrm{M}(X) \setminus \mathrm{M}(K)\), with \(\mathrm{M}(C)\) a compact neighborhood of \(x\) and \(C\) quasi-compact and admissible. But this implies that \(C \cap K = \emptyset\), whence \(C \subset X \setminus K\). Since as a filter, \(x\) contains \(C\), it must contain \(X \setminus K\), completing the proof.
\end{proof}

Note that this immediately implies the analogous result for any admissible open \(U \subset X\), namely that, for any quasi-compact admissible \(K \subset U\subset X\), we have \(\mathrm{M}(U) \setminus \mathrm{M}(K)  = \mathrm{M}(U \setminus K)\). This is because the left-hand-side is \((\mathrm{M}(X) \setminus \mathrm{M}(K)) \cap \mathrm{M}(U) = \mathrm{M}(X\setminus K) \cap \mathrm{M}(U)=\mathrm{M}(U \setminus K)\), as desired.

Continuing in the spirit of studying the \(\mathrm{M}(W)\) with \(W \subset X\) wide open, we also have the following.

\begin{prop}
Let \(C \subset \mathrm{M}(X)\) be compact, and let \(C \subset \mathrm{M}(K')\) be a compact neighborhood, where \(K' \subset X\) is quasi-compact and admissible. Then, there exists a partially proper \(W \subset X\) so that \(C \subset \mathrm{M}(W) \subset \mathrm{M}(K') \subset \mathrm{M}(X)\).
\end{prop}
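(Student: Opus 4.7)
The plan is to cover $C$ by finitely many open sets of the form $\mathrm{M}(W_i)$ with $W_i \subset X$ wide open and $\mathrm{M}(W_i) \subset \mathrm{M}(K')$, and then to take $W := W_1 \cup \cdots \cup W_n$.

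First, I would fix $x \in C$ and observe that by hypothesis $\mathrm{M}(K')$ is a neighborhood of $x$ in $\mathrm{M}(X)$. I would then invoke the structural fact used throughout Section~3 (and which the appendix is meant to record) that the sets $\mathrm{M}(W)$ with $W \subset X$ wide open form a basis for the topology on $\mathrm{M}(X)$. Applied to the neighborhood $\mathrm{M}(K')$ of $x$, this yields a wide open $W_x \subset X$ with $x \in \mathrm{M}(W_x) \subset \mathrm{M}(K')$.

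Next, the family $\{\mathrm{M}(W_x)\}_{x \in C}$ is an open cover of the compact set $C$, so by compactness it admits a finite subcover indexed by $x_1, \ldots, x_n \in C$. I would set $W := W_{x_1} \cup \cdots \cup W_{x_n}$, which is an admissible open of $X$. A finite union of wide opens is again wide open, since partial properness of the inclusion into $X$ is stable under finite unions of such admissibles. Using the compatibility of $\mathrm{M}(-)$ with admissible unions recorded earlier in the appendix,
\[
\mathrm{M}(W) = \bigcup_{i=1}^{n} \mathrm{M}(W_{x_i}),
\]
which contains $C$ by construction (since the $\mathrm{M}(W_{x_i})$ were chosen to cover $C$) and is contained in $\mathrm{M}(K')$ because each $\mathrm{M}(W_{x_i})$ is.

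The main obstacle I foresee is the very first step: producing each wide open $W_x$ with $\mathrm{M}(W_x)$ itself (rather than merely some further shrinking of it) sitting inside $\mathrm{M}(K')$. This rests on the fact that $\{\mathrm{M}(W) : W \text{ wide open in } X\}$ is a basis for the topology of $\mathrm{M}(X)$, combined with $\mathrm{M}(K')$ being a genuine neighborhood of $x$ rather than merely a superset, which is exactly what lets one shrink a basic open entirely into it. Once that step is in place, the finite subcover extraction and the union computation are immediate from the properties of the assignment $U \mapsto \mathrm{M}(U)$ already collected (or to be collected) in this appendix.
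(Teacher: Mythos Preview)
Your argument is circular within the logical structure of this paper. The fact that the sets $\mathrm{M}(W)$, for $W \subset X$ wide open, form a basis for the topology on $\mathrm{M}(X)$ is not an input to this proposition but a \emph{consequence} of it: immediately after the proof of Proposition~5.3 the paper writes ``Our proof actually demonstrates that there exists a basis for the topology on $\mathrm{M}(X)$ consisting of opens of form $\mathrm{M}(W)$ with $W$ wide open in $X$ \ldots''. What is available going in is only the Schneider--van der Put fact that each point has a fundamental system of compact neighborhoods of the form $\mathrm{M}(K)$ with $K$ quasi-compact admissible. So your very first step---producing a wide open $W_x$ with $x \in \mathrm{M}(W_x) \subset \mathrm{M}(K')$---assumes exactly what the proposition is meant to supply.

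The paper avoids this by a complement construction: it takes an open $U$ with $C \subset U \subset \mathrm{M}(K')$, encloses the compact set $\mathrm{M}(K') \setminus U$ in a compact neighborhood $\mathrm{M}(V)$ disjoint from $C$ (using only the Schneider--van der Put input, cf.\ Remark~5.4), sets $K := V \cap K'$, and takes $W := K' \setminus K$. One then checks directly that $\mathrm{M}(K' \setminus K)$ is open in $\mathrm{M}(X)$ and sandwiched between $C$ and $\mathrm{M}(K')$. This not only proves the proposition without appeal to the wide-open basis, it in fact yields the stronger basis statement (with $W$ of the special shape $K' \setminus K$) that the rest of the paper uses. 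If you had an independent source for the wide-open basis your finite-cover argument would be fine, but in this paper that source is Proposition~5.3 itself.
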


\begin{proof}
This proof essentially follows from elementary topological arguments, using that \(\mathrm{M}(X)\) is locally compact and Hausdorff, making extensive use of the fact that any point of \(\mathrm{M}(X)\) has a fundamental system of compact neighborhoods of form \(\mathrm{M}(K)\) for \(K \subset X\) quasi-compact, admissible. Since \(\mathrm{M}(K')\) is a compact neighborhood of \(C\), there exists an open neighborhood \(U \subset \mathrm{M}(K') \subset \mathrm{M}(X)\) of the topology of \(\mathrm{M}(X)\) containing \(C\). Note that \(\mathrm{M}(K') \setminus U\) is a compact subspace of \(\mathrm{M}(K')\). There exists some \(V \subset X\) a quasi-compact admissible open, so that \((\mathrm{M}(K') \setminus U) \subset \mathrm{M}(V)\) is a compact neighborhood in \(\mathrm{M}(X)\), but \(\mathrm{M}(V) \cap C = \emptyset\) (see the remark below). Put \(K:=V \cap K'\). Then, \(\mathrm{M}(K)\) is a compact neighborhood of \(\mathrm{M}(K') \setminus U\) with respect to the topology of \(\mathrm{M}(K')\), still not overlapping \(C\) nontrivially. But now, it is clear \(C \subset \mathrm{M}(K' \setminus K)=(\mathrm{M}(K') \setminus \mathrm{M}(K)) \subset U\). Further, we claim \(\mathrm{M}(K' \setminus K)\) is open in \(\mathrm{M}(X)\). This is because we know

\[\mathrm{M}(K') \setminus \mathrm{M}(K) = (\mathrm{M}(X) \setminus \mathrm{M}(K)) \cap \mathrm{M}(K')\]

and, intersecting both sides with \(U\), this gives 

\[\mathrm{M}(K') \setminus \mathrm{M}(K)=(\mathrm{M}(K') \setminus \mathrm{M}(K))\cap U = (\mathrm{M}(X) \setminus \mathrm{M}(K)) \cap \mathrm{M}(K') \cap U = (\mathrm{M}(X) \setminus \mathrm{M}(K)) \cap U\]

(the latter equality follows from \(U \subset \mathrm{M}(K')\)). The right-hand-side is an intersection to two opens of \(\mathrm{M}(X)\), so we are done with the claim.

This gives us that \(K' \setminus K \subset X\) is also partially proper. We thus can put \(W:=K' \setminus K\). Due to our assumption that \(\mathrm{M}(K) \cap C = \emptyset\), we in addition know that \(C \subset \mathrm{M}(K' \setminus K)\). 
\end{proof}

Our proof actually demonstrates that there exists a basis for the topology on \(\mathrm{M}(X)\) consisting of opens of form \(\mathrm{M}(W)\) with \(W\) wide open in \(X\) having form \(K' \setminus K\) with both \(K'\) and \(K\) quasi-compact admissible opens of \(X\). Indeed, given any open \(U \subset \mathrm{M}(X)\), and \(x \in U\), we know there is some quasi-compact admissible open \(K' \subset X\) so that \(x \in \mathrm{M}(K') \subset U\), where \(\mathrm{M}(K')\) is a compact neighborhood of \(x\). This is from the work of Schneider/van der Put. But our result/proof above show that there is some open of \(\mathrm{M}(X)\) of form \(\mathrm{M}(K'\setminus K)\) satisfying \(x \in \mathrm{M}(K'\setminus K') \subset \mathrm{M}(K') \subset U\). Further, given two such \(K_1'\setminus K_1,K_2'\setminus K_2\), note that their intersection

\[(K_1'\setminus K_1) \cap (K_2'\setminus K_2)=K_1' \cap K_2' \setminus ((K_1 \cup K_2)\cap (K_1' \cap K_2'))\]

is also wide open of the same form. 
\par
Furthermore, in the special case where \(X\) is quasi-compact, we get something further: taking \(K'=X\) and \(C \subset \mathrm{M}(X)\) some compact subspace, for any \(C \subset U \subset \mathrm{M}(X)\) with \(U\) open, there exists some \(C \subset \mathrm{M}(X) \setminus \mathrm{M}(K) \subset U\). That is, there is a basis consisting of opens of form \(\mathrm{M}(X\setminus K)\) for the topology on \(\mathrm{M}(X)\).
\par
We now collect some corollaries about cofinality that are a consequence of our work in this appendix. We first make the remark promised. 

\begin{rem}
Let \(C \subset \mathrm{M}(X)\) be a compact subspace. By standard topological arguments, \(C\) admits a fundamental system of compact neighborhoods of form \(\mathrm{M}(K')\), with \(K'\subset X\) quasicompact and admissible open. Suppose \(U\) is an open containing \(C\). For every point \(x \in C\): we can pick compact neighborhoods \(\mathrm{M}(K_x)\), containing open \(x \in U_x\), so that \(\mathrm{M}(K_x)\subset  U\). We note finitely many of the \(U_x\) have union containing \(C\) by compactness, so the corresponding \(\mathrm{M}(K_{x_1})\cup\cdots \cup \mathrm{M}(K_{x_n})=\mathrm{M}(K_{x_1}\cup \cdots \cup K_{x_n})\) is now contained in \(U\) and a compact neighborhood of \(C\) of the desired form. 
\par
This shows that, for any two compact subspaces \(C_1,C_2 \subset \mathrm{M}(X)\) with \(C_1 \cap C_2 = \emptyset\), we have some quasi-compact admissible open \(K \subset X\) so \(\mathrm{M}(K)\) is a compact neighborhood of \(C_1\) with \(\mathrm{M}(K) \cap C_2 = \emptyset\).  This is because, by elementary properties of locally compact, Hausdorff spaces, there is an open neigborhood \(U\) containing \(C_1\) that satisfies \(U \cap C_2=\emptyset\), and we can find a compact neighborhood \(C_1 \subset \mathrm{M}(K) \subset U\) of \(C_1\). This \(\mathrm{M}(K)\) by necessity satisfies \(\mathrm{M}(K) \cap C_2 = \emptyset\). This fact is used, for instance, in the proof of Proposition 5.3. The analogous result, stating there exists  \(C_1 \subset \mathrm{M}(W)\) with \(W\) wide open and \(C_2 \cap \mathrm{M}(W)=\emptyset\) is also true, using our work in this remark, along with 5.3. 
 
\end{rem}
\begin{cor}
The inclusion \(\mathrm{N}(\mathcal{W}_{K \subset}(X))^{op} \subset \mathrm{N}(\mathrm{Opens}_{\mathrm{M}(K)\subset}(\mathrm{M}(X)))^{op}\) is cofinal.
\end{cor}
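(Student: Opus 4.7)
The plan is to apply the cofinality criterion from [HTT] 4.1.3.1. Writing $P := \mathcal{W}_{K \subset}(X)$ and $Q := \mathrm{Opens}_{\mathrm{M}(K)\subset}(\mathrm{M}(X))$, with the inclusion $P \hookrightarrow Q$ induced by $W \mapsto \mathrm{M}(W)$, I need to show that for every open $U \in Q$ the slice $\mathrm{N}(P)^{op} \times_{\mathrm{N}(Q)^{op}} \mathrm{N}(Q)^{op}_{U/}$ is weakly contractible. Unpacking the under-category in the opposite nerve of a poset, this slice is the opposite nerve of
\[ S_U := \{W \in \mathcal{W}(X) : K \subset W,\ \mathrm{M}(W) \subset U\}, \]
so it suffices to show $\mathrm{N}(S_U)$ is weakly contractible for each such $U$ (weak contractibility is stable under passage to opposite simplicial sets).

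I will verify this by showing $S_U$ is a nonempty cofiltered poset, whose nerve is thus weakly contractible. For nonemptiness, since $\mathrm{M}(K)$ is compact and $U$ is an open neighborhood of $\mathrm{M}(K)$, Remark 5.4 produces a quasi-compact admissible $K' \subset X$ with $\mathrm{M}(K')$ a compact neighborhood of $\mathrm{M}(K)$ contained in $U$. Proposition 5.3 then yields a wide open $W \subset X$ with $\mathrm{M}(K) \subset \mathrm{M}(W) \subset \mathrm{M}(K') \subset U$, and Proposition 5.1 upgrades $\mathrm{M}(K) \subset \mathrm{M}(W)$ to $K \subset W$, so $W \in S_U$. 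For closure under binary intersections, given $W_1, W_2 \in S_U$, the intersection $W_1 \cap W_2$ is an admissible open containing $K$ whose $\mathrm{M}$-image is $\mathrm{M}(W_1) \cap \mathrm{M}(W_2) \subset U$; moreover $W_1 \cap W_2$ is itself wide open because its image in $\mathrm{M}(X)$ is open (intersection of two opens of $\mathrm{M}(X)$). Hence $W_1 \cap W_2 \in S_U$ provides a common lower bound, and $S_U$ is cofiltered.

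The main obstacle is really only bookkeeping: correctly tracking the direction of morphisms in the opposite nerves so that the comma category at $U$ is identified with $\mathrm{N}(S_U)^{op}$ rather than something else, and then invoking the right Appendix results in the right order. All substantive geometric input has been isolated in Propositions 5.1, 5.3, and Remark 5.4, so the proof reduces to this assembly.
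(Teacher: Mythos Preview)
Your proposal is correct and takes essentially the same approach as the paper: both verify the cofinality criterion of [HTT] 4.1.3.1 by showing the relevant slice posets are nonempty (via Remark 5.4 followed by Proposition 5.3) and closed under finite intersection. The only cosmetic difference is that the paper first factors the inclusion through the isomorphism \(W \mapsto \mathrm{M}(W)\) onto its image and then checks cofinality of the second inclusion, whereas you apply the criterion directly to the composite; the geometric content is identical.
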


\begin{proof}
This inclusion is written as the composition
 \[\mathrm{N}(\mathcal{W}_{K \subset}(X))^{op} \xrightarrow{\mathrm{top}} \mathrm{N}(\left\{\mathrm{M}(W) \in \mathrm{Opens}(\mathrm{M}(X)):W \in \mathcal{W}(X), \mathrm{M}(K) \subset \mathrm{M}(W)\right\})^{op} \]

\[\subset \mathrm{N}(\mathrm{Opens}_{\mathrm{M}(K)\subset}(\mathrm{M}(X)))^{op}\]

where the first map is an isomorphism of simplicial sets. We denote the middle simplicial set by \(S\). Note that the second map is cofinal. Indeed, for any open \(U \subset \mathrm{M}(X)\) containing \(\mathrm{M}(K)\), there exists by Appendix Remark 5.4 some compact neighborhood \(\mathrm{M}(K')\) of \(\mathrm{M}(K)\) and contained in \(U\), and by applying Appendix Proposition 5.3, we know there is also some \(\mathrm{M}(W)\) lying between \(\mathrm{M}(K')\) and \(\mathrm{M}(K)\), where \(W \subset X\) is wide open. Further, the collection of \(\mathrm{M}(W)\) lying between \(U\) and \(\mathrm{M}(K)\) is closed under finite intersection. Thus, for any \(U \in \mathrm{N}(\mathrm{Opens}_{\mathrm{M}(K)\subset}(\mathrm{M}(X)))^{op}\), we know that the simplicial set \[S \times_{\mathrm{N}(\mathrm{Opens}_{\mathrm{M}(K)\subset}(\mathrm{M}(X)))^{op}} \mathrm{N}(\mathrm{Opens}_{\mathrm{M}(K)\subset}(\mathrm{M}(X)))^{op})_{U/}\]

is weakly contractible. This shows the second arrow in our composition is cofinal, by [HTT] 4.1.3.1. We are now done by [HTT] 4.1.1.3 (1) and (2), as the functor we wish to be cofinal is a composition of an isomorphism (hence cofinal map) of simplicial sets and another cofinal map.
\end{proof}

\begin{cor}

The inclusion
\[\mathrm{N}(\mathcal{K}_{K \subset\subset}(X))^{op} \subset \mathrm{N}(\mathcal{K}_{\mathrm{M}(K) \subset\subset}(\mathrm{M}(X))^{op}\]

is cofinal.
\end{cor}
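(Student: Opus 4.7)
The plan is to mimic the proof of Corollary 5.5, factoring the inclusion as
\[
\mathrm{N}(\mathcal{K}_{K \subset\subset}(X))^{op} \xrightarrow{\mathrm{top}} \mathrm{N}(\{\mathrm{M}(K') : K' \in \mathcal{K}(X),\, K \subset\subset K'\})^{op} \subset \mathrm{N}(\mathcal{K}_{\mathrm{M}(K) \subset\subset}(\mathrm{M}(X)))^{op},
\]
where the first map is an isomorphism of simplicial sets. Indeed, Proposition 5.1 shows that $K' \mapsto \mathrm{M}(K')$ is an order-preserving bijection of admissible opens onto their $\mathrm{M}$-images, while Remark 3.5 identifies the relation $K \subset\subset K'$ with $\mathrm{M}(K')$ being a compact neighborhood of $\mathrm{M}(K)$, so the image does land in $\mathcal{K}_{\mathrm{M}(K) \subset\subset}(\mathrm{M}(X))$. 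By [HTT] 4.1.1.3 (1) and (2), it will then suffice to show the second (inclusion) arrow is cofinal.

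By [HTT] 4.1.3.1, I would reduce this to showing that for each compact neighborhood $C$ of $\mathrm{M}(K)$ in $\mathrm{M}(X)$, the fibered poset
\[
P_C := \{K' \in \mathcal{K}(X) : K \subset\subset K',\, \mathrm{M}(K') \subset C\}
\]
(with the ordering coming from the opposite slice) has weakly contractible nerve. I intend to verify this by proving $P_C$ is nonempty and closed under finite intersections; its opposite is then a filtered poset, whose nerve is weakly contractible. For non-emptiness, Appendix Remark 5.4 provides a fundamental system of compact neighborhoods of $\mathrm{M}(K)$ of the form $\mathrm{M}(K'')$ with $K'' \in \mathcal{K}(X)$; choosing such an $\mathrm{M}(K'') \subset \mathrm{Int}(C)$ yields $K \subset\subset K''$ by Remark 3.5 and $\mathrm{M}(K'') \subset C$, so $K'' \in P_C$. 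For closure under finite intersections, the compatibility $\mathrm{M}(K_1' \cap K_2') = \mathrm{M}(K_1') \cap \mathrm{M}(K_2')$ already recorded in the appendix shows that $K_1' \cap K_2' \in \mathcal{K}(X)$ has $\mathrm{M}$-image contained in $C$, and finite intersections of neighborhoods of $\mathrm{M}(K)$ remain neighborhoods, so $K_1' \cap K_2' \in P_C$.

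I do not anticipate a substantive obstacle: the argument is a formal assembly of the compatibilities of $\mathrm{M}(-)$ with inclusions and finite intersections, the topological characterization of inner-ness via compact neighborhoods from Remark 3.5, and the fundamental-system statement of Remark 5.4. The only minor subtlety is bookkeeping about which direction slices go in the opposite poset, but this does not matter for the weak-contractibility conclusion since the nerves of a filtered poset and its opposite are both weakly contractible.
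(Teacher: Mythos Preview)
Your proposal is correct and follows essentially the same approach as the paper: factor through the intermediate poset of $\mathrm{M}(K')$ with $K'\in\mathcal{K}(X)$ and $K\subset\subset K'$, identify the first map as an isomorphism via Remark 3.5, and prove cofinality of the second via [HTT] 4.1.3.1 using that such $\mathrm{M}(K')$ form a fundamental system of compact neighborhoods (Remark 5.4) closed under finite intersection. Your write-up is, if anything, slightly more explicit than the paper's about the nonemptiness and intersection-closure steps.
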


\begin{proof}
Our inclusion is given by the composition 
\[\mathrm{N}(\mathcal{K}_{K \subset\subset}(X))^{op}\xrightarrow{\mathrm{top}} \mathrm{N}(\left\{\mathrm{M}(K'):K' \in \mathcal{K}(X), \mathrm{M}(K) \subset\subset \mathrm{M}(K')\right\})^{op}\subset \mathrm{N}(\mathcal{K}_{\mathrm{M}(K) \subset\subset}(\mathrm{M}(X))^{op}, \]

where the first arrow is an isomorphism, and the second is cofinal. The former is true because the collection of quasi-compact admissibles \(K'\) so that \(K \subset\subset K'\) is the same as those quasi-compact admissible opens such that \(\mathrm{M}(K')\) is a compact neighborhood of \(\mathrm{M}(K)\); indeed, if \(K\subset\subset K'\), clearly \(\mathrm{M}(K)\subset\subset \mathrm{M}(K')\), but conversely, if \(\mathrm{M}(K)\subset\subset \mathrm{M}(K')\), then there is some \(\mathrm{M}(W)\) with \(W \in \mathcal{W}(X)\) so \(\mathrm{M}(K) \subset \mathrm{M}(W) \subset \mathrm{M}(K')\), whence the converse is shown. The latter is true because the \(\mathrm{M}(K')\) involved constitute a fundamental system of compact neighborhoods of \(\mathrm{M}(K)\) (see Remark 5.4 of this Appendix) closed under finite intersection, meaning that, denoting the middle simplicial set by \(S\), for any compact neighborhood \(C\) of \(\mathrm{M}(K)\), the simplicial set

\[S \times_{\mathrm{N}(\mathcal{K}_{\mathrm{M}(K) \subset\subset}(\mathrm{M}(X))^{op}} (\mathrm{N}(\mathcal{K}_{\mathrm{M}(K) \subset\subset}(\mathrm{M}(X))^{op})_{C/}\]

is weakly contractible. Now, we are done, by applying [HTT] 4.1.1.3 (1) and (2), since our composition is one of two cofinal functors (since isomorphisms of simplicial sets are cofinal).
\end{proof}

\begin{cor}
The inclusions 
\[\mathrm{N}(\mathcal{W}_{K \subset}(X))^{op} \subset \mathrm{N}(\mathcal{W}_{K \subset }(X) \cup \mathcal{K}_{K \subset\subset}(X))^{op}\]

and 

\[\mathrm{N}(\mathcal{K}_{K \subset\subset}(X))^{op} \subset \mathrm{N}(\mathcal{W}_{K \subset }(X)\cup \mathcal{K}_{K \subset\subset}(X))^{op}.\]

are cofinal.
\end{cor}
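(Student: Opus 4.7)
The plan is to invoke Joyal's cofinality criterion (HTT 4.1.3.1): an inclusion \(i:A\hookrightarrow B\) is cofinal if and only if, for every \(b\in B\), the slice \(A\times_B B_{b/}\) is weakly contractible. Since both of our inclusions are of nerves of op-posets, each slice identifies with the nerve of a subposet of \(A\), and we would verify weak contractibility either by exhibiting an initial object or by showing the op-poset is filtered (which, for a poset, amounts to being nonempty and closed under binary intersections in the original inclusion order). The argument then splits into cases according to which of \(\mathcal{W}_{K\subset}(X)\) or \(\mathcal{K}_{K\subset\subset}(X)\) the target object \(b\) belongs to.

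For the first inclusion, if \(b=W\in\mathcal{W}_{K\subset}(X)\), the slice is the op of \(\{W'\in\mathcal{W}_{K\subset}(X):W'\subset W\}\), which has \(W\) as maximum and thus acquires an initial object after taking op. If \(b=K'\in\mathcal{K}_{K\subset\subset}(X)\), the slice is the op of \(\{W'\in\mathcal{W}_{K\subset}(X):W'\subset K'\}\); this is nonempty by the very definition of \(K\subset\subset K'\), and we would check closure under binary intersection by noting that an intersection of wide opens is wide open (via \(\mathrm{M}(W_1\cap W_2)=\mathrm{M}(W_1)\cap\mathrm{M}(W_2)\) being an intersection of opens in \(\mathrm{M}(X)\)), still contains \(K\), and still sits inside \(K'\).

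For the second inclusion, the case \(b=K'\in\mathcal{K}_{K\subset\subset}(X)\) is again immediate: the slice is the op of \(\{K''\in\mathcal{K}_{K\subset\subset}(X):K''\subset K'\}\), with \(K'\) as maximum. The only nontrivial case is \(b=W\in\mathcal{W}_{K\subset}(X)\), where the slice is the op of \(\{K''\in\mathcal{K}_{K\subset\subset}(X):K''\subset W\}\). For nonemptiness we would use that \(\mathrm{M}(K)\) is a compact subspace of the open \(\mathrm{M}(W)\subset\mathrm{M}(X)\), so by Remark 5.4 there is a quasi-compact admissible \(K''\subset X\) with \(\mathrm{M}(K'')\) a compact neighborhood of \(\mathrm{M}(K)\) contained in \(\mathrm{M}(W)\); then Proposition 5.1 gives \(K''\subset W\), and the compact-neighborhood property yields \(K\subset\subset K''\) (cf.\ Remark 3.5). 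For closure under binary intersection, given two such \(K_1'',K_2''\), the intersection remains quasi-compact admissible (by separatedness of \(X\)), remains inside \(W\), and still satisfies \(K\subset\subset K_1''\cap K_2''\) after intersecting witnessing wide opens \(W_i\) with \(K\subset W_i\subset K_i''\) (again using that intersections of wide opens are wide open).

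The main obstacle will be the nonemptiness step in this last case: it is the only place where the locally-compact-Hausdorff geometry of \(\mathrm{M}(X)\) (with its fundamental system of compact neighborhoods of form \(\mathrm{M}(K')\)) really enters, and everything else reduces to poset combinatorics together with the basic compatibilities of \(\mathrm{M}(-)\) with unions, intersections, and complements recorded earlier in the appendix.
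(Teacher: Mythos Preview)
Your proposal is correct and follows essentially the same approach as the paper: apply the cofinality criterion of [HTT] 4.1.3.1 and verify weak contractibility of each slice via nonemptiness plus closure under binary intersection, splitting into cases according to whether the target object lies in \(\mathcal{W}_{K\subset}(X)\) or \(\mathcal{K}_{K\subset\subset}(X)\). Your treatment is in fact more thorough than the paper's, which omits the explicit closure-under-intersection check in the second inclusion (your argument via intersecting the witnessing wide opens \(W_i\) is exactly what is needed there).
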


\begin{proof}
We begin with the first case. Let \(K \subset\subset K'\), where both are quasi-compact, admissible open in \(X\). We will show that the simplicial set 

\[\mathrm{N}(\mathcal{W}_{K \subset}(X))^{op}\times_{\mathrm{N}(\mathcal{W}_{K \subset }(X) \cup \mathcal{K}_{K \subset\subset}(X))^{op}} (\mathrm{N}(\mathcal{W}_{K \subset }(X) \cup \mathcal{K}_{K \subset\subset}(X))^{op})_{K'/}\]

is weakly contractible. Note that there exists \(W\in \mathcal{W}(X)\) so that \(K \subset W \subset K'\), since that is the definition of \(K \subset \subset K'\). Further, the collection of all \(W\) lying between \(K\) and \(K'\) is closed under finite intersection, whence we are done demonstrating weak contractibility, and the proof is finished. (We can also apply [HTT] 4.1.3.1 to show a weak contractibility claim at \(W \in \mathcal{W}_{K\subset}(X)\), not just at the various \(K \subset\subset K'\): this case is easy/omitted.)
\par
The second case will be handled similarly. Let \(K \subset W\), where \(W \in \mathcal{W}(X)\). We will show that 

\[(\mathrm{N}(\mathcal{K}_{K \subset\subset}(X))^{op} \times_{(\mathrm{N}(\mathcal{W}_{K \subset }(X)\cup \mathcal{K}_{K \subset\subset}(X))^{op}}(\mathrm{N}(\mathcal{W}_{K \subset }(X)\cup \mathcal{K}_{K \subset\subset}(X))^{op})_{W/}\]

is weakly contractible. Inded, there exists \(K' \in \mathcal{K}_{K\subset\subset}(X)\) such that \(K' \subset W\), since there exists \(K' \in \mathcal{K}(X)\) such that \(\mathrm{M}(K')\) is a compact neighborhood of \(\mathrm{M}(K)\) contained in \(\mathrm{M}(W)\).
\end{proof}

We commence with a result that implies that sheaves for the wide open \(G\)-topology on \(X\) can be identified with sheaves for the basis of \(\mathrm{M}(X)\) consisting of all \(\mathrm{M}(W)\) with \(W \subset X\) wide open. We record an elementary lemma first. 

\begin{lem}
Let \(X\) be a rigid analytic space. A cover \(\left\{U_i\right\}_{i \in I}=: \mathcal{U}\) of \(X\) by admissible opens of \(X\) is an admissible covering if and only if, for any open affinoid \(Sp(A)\subset X\), the pullback of \(\mathcal{U}\) to \(Sp(A)\) admits a finite refinement of open affinoids. 
\end{lem}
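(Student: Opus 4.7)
The plan is to argue each direction by unwinding the definition of admissibility given in Bosch-Güntzer-Remmert (or the presentation in Schneider--van der Put). The forward direction is essentially a tautology from the way admissibility is set up on affinoids; the reverse direction amounts to reducing from an arbitrary affinoid to the ones in a chosen admissible affinoid cover of \(X\).

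For the forward direction, suppose \(\mathcal U = \{U_i\}_{i \in I}\) is an admissible covering of \(X\), and let \(Sp(A) \subset X\) be any open affinoid. By definition, the pullback \(\{U_i \cap Sp(A)\}_{i \in I}\) is an admissible covering of \(Sp(A)\); each \(U_i \cap Sp(A)\) is an admissible open of an affinoid space, so it can be written as an admissible union of affinoid subdomains of \(Sp(A)\). By Tate's acyclicity theorem for affinoid spaces, admissibility of a cover of an affinoid is equivalent to the existence of a finite refinement by affinoid subdomains, which gives precisely the required finite refinement by open affinoids.

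For the reverse direction, fix an admissible covering \(\mathcal V = \{Sp(A_\alpha)\}\) of \(X\) by open affinoids (using the separatedness and paracompactness of \(X\), which in particular gives the existence of such a cover). To check that \(\mathcal U\) is admissible, it suffices by the definition of the \(G\)-topology on \(X\) to check that \(\{U_i \cap Sp(A_\alpha)\}_{i \in I}\) is an admissible cover of \(Sp(A_\alpha)\) for each \(\alpha\). But this is precisely the hypothesis applied to each \(Sp(A_\alpha)\): the existence of a finite refinement by open affinoids is Tate's criterion for the admissibility of a cover of an affinoid.

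The main (minor) obstacle is ensuring that the two characterizations — admissibility as defined via a chosen affinoid atlas, and the ``intrinsic'' condition of admitting finite affinoid refinements over every open affinoid — are compatible; this is where separatedness enters, since it guarantees that intersections of open affinoids are affinoid, so pullback along \(Sp(A) \hookrightarrow X\) behaves well with respect to the \(G\)-topology. Otherwise the argument is entirely formal from Tate's theorem.
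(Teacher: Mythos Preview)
Your proof is correct and follows essentially the same approach as the paper: both directions rest on the characterization of admissibility on an affinoid via finite affinoid refinements, and the reverse direction reduces to checking on an admissible affinoid cover. The paper is slightly more explicit in one place where you appeal to ``the definition of the \(G\)-topology'': it isolates and proves the locality statement that, given any admissible covering \(\mathcal{W}=\{W_k\}\) of \(X\), a cover \(\mathcal{U}\) is admissible if and only if each \(\mathcal{U}\cap W_k\) is (by noting that \(\{W_k\cap U_i\}\) is then an admissible refinement of \(\mathcal{U}\)), and then applies this with \(\mathcal{W}\) the collection of all open affinoids rather than a single chosen atlas. Your remark about separatedness ensuring affinoid intersections is not actually needed for the argument, though it does no harm.
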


\begin{proof}
First, we note that a covering of some affinoid space \(Sp(A)\) by admissible opens is an admissible covering if and only if it admits a finite refinement consisting of open affinoids. Indeed, given a covering with such a refinement, since such a refinement is automatically admissible (see, for instance, Brian Conrad's \itshape Several Approaches to Nonarchimedean Geometry \upshape Example 2.2.7), we have the original covering is admissible. The other direction is obvious: by definition of the \(G\)-topology, an admissible cover must admit such a refinement. 
\par
So, we are reduced to proving the following statement: a cover \(\left\{U_i\right\}_{i \in I}=: \mathcal{U}\) of \(X\)  by admissible opens is an admissible covering if and only if, for any open affinoid \(Sp(A)\subset X\), the pullback of \(\mathcal{U}\) to \(Sp(A)\) is admissible. Since the collection of all open affinoids contained in \(X\) constitutes an admissible covering of \(X\), this is a consequence of the following general assertion: given some admissible covering \(\mathcal{W}=\left\{W_k\right\}_{k \in K}\) of \(X\), our cover \(\mathcal{U}\) is admissible if and only if its intersection with each element of \(\mathcal{W}\) is. Indeed, suppose each such intersection is admissible. The collection of all \(\left\{W_k\cap U_i\right\}_{i\in I,k \in K}\) is then an admissible covering of \(X\). Further, this is an admissible refinement of \(\mathcal{U}\), since each \(W_k \cap U_i\subset U_i\) is contained in some element of \(\mathcal{U}\). So, \(\mathcal{U}\) is also admissible. Conversely, suppose \(\mathcal{U}\) is admissible. Then, clearly \(\mathcal{U} \cap W_k\) is admissible for each \(W_k \in \mathcal{W}\), as desired. 
\end{proof}

\begin{lem}
Let \(\mathcal{U}:=\left\{U_i\right\}_{i \in I}\) be a collection of admissible opens of \(X\), and let \(U\) be another so that \(\left\{\mathrm{M}(U_i)\right\}_{i \in I}\) covers \(\mathrm{M}(U)\). Then, \(\left\{U_i\right\}_{i \in I}\) covers \(U\). 
\end{lem}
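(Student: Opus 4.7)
My plan is to first establish pointwise coverage of \(U\) by \(\{U_i\}_{i \in I}\) by directly invoking the neighborhood filter construction from the proof of Proposition 5.1, and then, if the intended sense of ``covers'' is an admissible cover, upgrade to admissibility via the affinoid-refinement criterion supplied by Lemma 5.11.

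First I would take an arbitrary point \(x \in U\) and form its neighborhood filter \(\mathrm{F}_x\), the set of admissible opens of \(X\) containing \(x\); this was shown to be a maximal filter in Proposition 5.1. Since \(x \in U\) we have \(U \in \mathrm{F}_x\), i.e.\ \(\mathrm{F}_x \in \mathrm{M}(U)\). The hypothesis that \(\{\mathrm{M}(U_i)\}_{i\in I}\) covers \(\mathrm{M}(U)\) then furnishes an index \(i\) with \(\mathrm{F}_x \in \mathrm{M}(U_i)\), which unpacks precisely as \(U_i \in \mathrm{F}_x\), i.e.\ \(x \in U_i\). This already settles pointwise coverage and is really the core of the lemma, since it is an exact dualization of the filter argument used in Proposition 5.1.

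To strengthen this to an admissible cover I would apply Lemma 5.11: it suffices to show that for every open affinoid \(\mathrm{Sp}(A) \subset U\), the induced family \(\{U_i \cap \mathrm{Sp}(A)\}_{i \in I}\) admits a finite refinement by open affinoids of \(\mathrm{Sp}(A)\). For each classical point \(x \in \mathrm{Sp}(A)\) I would use the pointwise step to pick some \(U_{i(x)} \ni x\), and then choose an open affinoid \(\mathrm{Sp}(B_x) \subset U_{i(x)} \cap \mathrm{Sp}(A)\) containing \(x\) (such affinoids form a basis of admissible opens inside \(\mathrm{Sp}(A)\)). The corresponding collection \(\{\mathrm{M}(\mathrm{Sp}(B_x))\}_x\) is an open cover of the compact Berkovich space \(\mathrm{M}(\mathrm{Sp}(A))\), from which compactness extracts a finite subcover. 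Applying the pointwise step once more (to the affinoid \(\mathrm{Sp}(A)\) and the finite family) shows that this finite subfamily covers \(\mathrm{Sp}(A)\) in the rigid sense, yielding the finite open affinoid refinement required by Lemma 5.11.

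The main obstacle is essentially bookkeeping rather than conceptual: one must check that the sets \(\mathrm{M}(\mathrm{Sp}(B_x))\) really are open in \(\mathrm{M}(\mathrm{Sp}(A))\) and exhaust it in the Berkovich topology, and that an open affinoid \(\mathrm{Sp}(B_x) \subset U_{i(x)} \cap \mathrm{Sp}(A)\) containing \(x\) can always be selected — both facts follow from Proposition 5.1 and the standard basis results for admissible opens inside affinoids already recorded in the Appendix. If, on the other hand, ``covers'' is to be read in the purely set-theoretic sense, only the first step is needed and the proof is essentially immediate from Proposition 5.1.
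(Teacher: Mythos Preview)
Your first step---the neighborhood filter argument---is exactly the paper's proof, and it is all that is needed: the lemma asserts only set-theoretic coverage. The paper invokes it in the very next Proposition precisely to obtain pointwise coverage, and then establishes admissibility separately under the additional hypothesis that the \(U_i\) are wide open (so that each \(\mathrm{M}(U_i)\) is genuinely open in \(\mathrm{M}(X)\)).

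Your second step, the attempted upgrade to admissibility, is therefore superfluous here, and it also contains a gap. The sets \(\mathrm{M}(\mathrm{Sp}(B_x))\) are compact in \(\mathrm{M}(\mathrm{Sp}(A))\) but need not be open in the Berkovich topology---affinoid subdomains give closed, not open, subsets in general---so the compactness argument extracting a finite subcover does not go through as written. (This is exactly why the paper's admissibility argument in the next Proposition restricts to wide opens and uses Remark 5.4 to produce compact \emph{neighborhoods} rather than arbitrary affinoids.) The caveat you mention at the end, that ``\(\mathrm{M}(\mathrm{Sp}(B_x))\) really are open,'' is thus not a bookkeeping detail but the point where the argument fails.
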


\begin{proof}
Let \(x \in U\). The neighborhood filter \(\mathrm{F}_x \in \mathrm{M}(U)\), whence by hypothesis, it is contained in some \(\mathrm{M}(U_i)\). Hence, \(\mathrm{F}_x\) contains \(U_i\) as a filter. By definition, this means \(x \in U_i\), so the proof is complete.
\end{proof}

\begin{prop}
Let \(\left\{W_i\right\}_{i \in I} =: \mathcal{W}\) be a collection of wide opens of \(X\), and consider wide open \(W\). Then, \(\mathcal{W}\) is an admissible covering of \(W\) \itshape if and only if \upshape \(\mathrm{M}(\mathcal{W}):=\left\{\mathrm{M}(W_i)\right\}_{i \in I}\) is a covering of \(\mathrm{M}(W)\). 
\end{prop}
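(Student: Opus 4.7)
The plan is as follows. The forward implication is essentially the content of property (1) of the assignment $\mathrm{M}(-)$ recorded just after Proposition 5.1: admissible coverings of admissible opens are sent to set-theoretic coverings of the corresponding $\mathrm{M}$-subsets. So the real content lies in the converse.

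For the converse, assume $\{\mathrm{M}(W_i)\}_{i \in I}$ covers $\mathrm{M}(W)$. By Lemma 5.9, $\{W_i\}$ already covers $W$ pointwise, so it remains to verify admissibility of this covering. I would apply Lemma 5.8: it suffices to show that for every open affinoid $\mathrm{Sp}(A) \subset W$, the pullback cover $\{W_i \cap \mathrm{Sp}(A)\}_{i \in I}$ admits a finite refinement by open affinoids of $\mathrm{Sp}(A)$.

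The key step will be to produce such a refinement by combining compactness of $\mathrm{M}(\mathrm{Sp}(A))$ with the existence of a good basis of affinoid compact neighborhoods. First, since each $W_i$ is wide open, each $\mathrm{M}(W_i)$ is open in $\mathrm{M}(X)$, and compactness of $\mathrm{M}(\mathrm{Sp}(A))$ yields a finite subcover $\mathrm{M}(\mathrm{Sp}(A)) \subset \mathrm{M}(W_{i_1}) \cup \cdots \cup \mathrm{M}(W_{i_n})$. Next, for each $x \in \mathrm{M}(\mathrm{Sp}(A))$, standard Berkovich theory for affinoid algebras (as used throughout Schneider/van der Put) provides a fundamental system of compact neighborhoods of $x$ in $\mathrm{M}(\mathrm{Sp}(A))$ of the form $\mathrm{M}(\mathrm{Sp}(B))$ with $\mathrm{Sp}(B) \subset \mathrm{Sp}(A)$ an affinoid subdomain; shrinking, I select such a neighborhood $\mathrm{M}(\mathrm{Sp}(B_x))$ lying inside some $\mathrm{M}(W_{i_{j(x)}})$. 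A second application of compactness of $\mathrm{M}(\mathrm{Sp}(A))$ extracts finitely many of these affinoid subdomains $\mathrm{Sp}(B_{x_1}), \ldots, \mathrm{Sp}(B_{x_m})$ with $\bigcup_k \mathrm{M}(\mathrm{Sp}(B_{x_k})) = \mathrm{M}(\mathrm{Sp}(A))$ and each $\mathrm{M}(\mathrm{Sp}(B_{x_k})) \subset \mathrm{M}(W_{i_{j(x_k)}})$.

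To conclude, I invoke Proposition 5.1 on each inclusion $\mathrm{M}(\mathrm{Sp}(B_{x_k})) \subset \mathrm{M}(W_{i_{j(x_k)}})$ to deduce $\mathrm{Sp}(B_{x_k}) \subset W_{i_{j(x_k)}}$, and Lemma 5.9 to deduce that $\{\mathrm{Sp}(B_{x_k})\}_k$ covers $\mathrm{Sp}(A)$ pointwise. Since any finite cover of an affinoid by affinoid subdomains is automatically admissible (as recalled at the start of the proof of Lemma 5.8), this is the desired finite refinement of $\{W_i \cap \mathrm{Sp}(A)\}$ by open affinoids, and Lemma 5.8 completes the argument. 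The main obstacle is the step that selects the affinoid neighborhoods $\mathrm{M}(\mathrm{Sp}(B_x))$ shrunk inside the open $\mathrm{M}(W_{i_{j(x)}})$; it rests on the fact that the topology on $\mathrm{M}(\mathrm{Sp}(A))$ admits a basis of compact neighborhoods of the form $\mathrm{M}(\mathrm{Sp}(B))$ with $\mathrm{Sp}(B)$ an affinoid subdomain, which is precisely the classical Berkovich-theoretic input.
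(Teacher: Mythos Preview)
Your proof is correct and follows essentially the same approach as the paper: both reduce to Lemma~5.8, pick an affinoid $\mathrm{Sp}(A)\subset W$, and use compactness of $\mathrm{M}(\mathrm{Sp}(A))$ together with a fundamental system of compact neighborhoods coming from (quasi-compact) admissible opens to extract a finite refinement subordinate to the $W_i$. The only cosmetic differences are that your first compactness step (extracting finitely many $W_{i_j}$) is redundant, and you work directly with affinoid subdomains $\mathrm{Sp}(B_x)\subset\mathrm{Sp}(A)$ whereas the paper uses quasi-compact admissible opens $V_x\subset X$ (via Remark~5.4) and then intersects with $\mathrm{Sp}(A)$ at the end.
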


\begin{proof}
One direction is obvious: if \(\mathcal{W}\) is an admissible cover of \(W\), then clearly \(\mathrm{M}(\mathcal{W})\) covers \(\mathrm{M}(W)\) (in fact, this holds for any admissible covering, not just wide open admissible coverings). Conversely, suppose \(\mathrm{M}(\mathcal{W})\) covers \(\mathrm{M}(W)\). From the above lemma, we get that \(\mathcal{W}\) covers \(W\), so we just need to show the covering is admissible. Let \(Sp(A):=Y \subset W\) be admissible open affinoid. Note that, for any \(x \in \mathrm{M}(Y)\), there exists some \(W_i\) so that \(x \in \mathrm{M}(W_i)\). Applying Appendix Remark 5.4, pick a quasi-compact admissible open \(V_x\subset X\) so that \(\mathrm{M}(V_x)\) is a compact neighborhood of \(x\) with \(x \in \mathrm{M}(V_x) \subset \mathrm{M}(W_i)\). By compactness of \(\mathrm{M}(Y)\), there is a finite subcollection \(\mathrm{M}(V_{x_1}),...,\mathrm{M}(V_{x_n})\) whose union contains \(\mathrm{M}(Y)\). This implies that

\[\mathrm{M}(Y) \subset \mathrm{M}(V_{x_1}\cup \cdots \cup V_{x_n}),\]

which in turn implies that \(Y \subset V_{x_1}\cup \cdots \cup V_{x_n}\). Clearly, each \(V_{x_i}\) is, by construction, contained in some \(W_k\). So, we have built a finite refinement of \(\mathcal{W}\) consisting of quasi-compact admissible opens of \(X\) whose union contains \(Sp(A)\). Thus, the pullback of \(\mathcal{W}\) to \(Sp(A)\) admits a finite refinement \(\left\{Y\cap V_{x_1},...,Y\cap V_{x_n}\right\}\) of quasi-compact admissible opens (and hence affinoid admissible opens). Thus, we are done by the above lemma. 
\end{proof}

\begin{cor}
Restriction along the identification \[\mathrm{N}(\mathcal{W}(X)) \xrightarrow{\mathrm{top}} \mathrm{N}(\left\{\mathrm{M}(W) \in \mathrm{Opens}(\mathrm{M}(X)):W \in \mathcal{W}(X)\right\})\] induces an equivalence of \(\infty\)-categories

\[\mathrm{Shv}_{\mathcal{B}}(\mathrm{M}(X),\mathcal{C})\rightarrow \mathrm{Shv}_{wo}(X,\mathcal{C}).\]

Here, \(\mathcal{B}\) denotes the basis on \(\mathrm{M}(X)\) consisting of opens of form \(\mathrm{M}(W)\) with \(W \in \mathcal{W}(X)\), and \(\mathrm{Shv}_{\mathcal{B}}\) denotes sheaves for this basis (closed under finite intersection).
\end{cor}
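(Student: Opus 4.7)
The plan is to verify that the assignment \(\mathrm{top}\colon W \mapsto \mathrm{M}(W)\) is an isomorphism of posets under which the Grothendieck topologies on either side are matched; restriction along such an isomorphism of sites automatically induces an equivalence on sheaf \(\infty\)-categories.

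First, by Proposition 5.1 together with the observation immediately following it that \(\mathrm{M}(W) = \mathrm{M}(W')\) forces \(W = W'\), the functor \(\mathrm{top}\) is order-preserving, order-reflecting, and essentially surjective onto its image \(\mathcal{B}\); hence it is an isomorphism of poset categories. Since intersections of partially proper admissible opens are partially proper (partial properness is closed under intersection via the valuative criterion for rigid analytic spaces), \(\mathcal{W}(X)\) is closed under finite intersection, and the compatibility of \(\mathrm{M}(-)\) with finite intersections (recorded early in the appendix) shows \(\mathcal{B}\) is as well, with \(\mathrm{top}\) preserving these intersections. Consequently restriction along \(\mathrm{top}\) already yields an isomorphism of presheaf \(\infty\)-categories \(\mathrm{Fun}(\mathrm{N}(\mathcal{B})^{op},\mathcal{C}) \cong \mathrm{Fun}(\mathrm{N}(\mathcal{W}(X))^{op},\mathcal{C})\).

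The key step is then to match covering sieves. A sieve on \(W\) in the wide open \(G\)-topology contains a covering iff it contains an admissible cover \(\{W_i\}_{i \in I}\) of \(W\) by wide opens; by Proposition 5.10, this happens iff \(\{\mathrm{M}(W_i)\}_{i \in I}\) covers \(\mathrm{M}(W)\) in \(\mathrm{M}(X)\), which is precisely the covering condition for the basis \(\mathcal{B}\). Conversely, any cover of \(\mathrm{M}(W)\) by basic opens \(\mathrm{M}(W_i)\) pulls back under \(\mathrm{top}^{-1}\) to an admissible cover of \(W\), again by Proposition 5.10.

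Given the identification of covering families and the preservation of finite intersections, the Čech nerves that encode descent on the two sides correspond termwise under \(\mathrm{top}\); hence a functor on \(\mathrm{N}(\mathcal{B})^{op}\) is a \(\mathcal{B}\)-sheaf valued in \(\mathcal{C}\) if and only if its restriction along \(\mathrm{top}\) is a sheaf for the wide open \(G\)-topology. The only real obstacle is the matching of covering families themselves, since the wide open \(G\)-topology uses admissible covers rather than arbitrary set-theoretic ones; but Proposition 5.10 is tailor-made for exactly this comparison, so the argument goes through essentially formally once one has that result in hand.
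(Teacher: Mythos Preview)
Your proposal is correct and follows essentially the same approach as the paper: both arguments reduce the equivalence to showing that the isomorphism of posets \(\mathrm{top}\) identifies the covering sieves on each side, and both invoke Proposition 5.10 as the key input for this identification. Your version is somewhat more explicit about why \(\mathrm{top}\) is a poset isomorphism and about closure under finite intersections, but the logical structure and the decisive use of Proposition 5.10 coincide with the paper's proof.
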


\begin{proof}
We can prove this by noting that the identification 

\(\mathrm{N}(\mathcal{W}(X)) \cong \mathrm{N}(\left\{\mathrm{M}(W) \in \mathrm{Opens}(\mathrm{M}(X)):W \in \mathcal{W}(X)\right\})\)

also identifies the covering sieves for either side (we will denote the right-hand-side by \(S\) for convenience). A covering sieve on some \(W \in \mathcal{W}(X)\) at left is a full subcategory of form (for an admissible wide open covering \(\mathcal{U}:=\left\{W_i\right\}_{i \in I}\) of \(W\), and denoting by \(\mathcal{W}_{\mathcal{U}}(X) \) the poset of wide opens contained in some element of \(\mathcal{U}\))
\[\mathrm{N}(\mathcal{W}_{\mathcal{U}}(X)) \times_{\mathrm{N}(\mathcal{W}(X))} \mathrm{N}(\mathcal{W}(X))_{/W}\subset \mathrm{N}(\mathcal{W}(X))_{/W}.\]

It is obvious that the equivalence \(\mathrm{top}\) carries this covering sieve to the covering sieve defined by the covering \(\mathrm{M}(\mathcal{U})\) of \(\mathrm{M}(W)\). 
\par
Conversely, given a covering of form \(\mathrm{M}(\mathcal{U})\) of some \(\mathrm{M}(W)\), we know \(\mathcal{U}\) is an admissible covering of \(W\), so the covering sieve corresponding to \(\mathrm{M}(\mathcal{U})\) is identified with the covering sieve associated to \(\mathcal{U}\) under \(\mathrm{top}\).

\end{proof}

We also have a similar result to Appendix Proposition 5.10 for \itshape finite unions \upshape of quasi-compact admissible opens.

\begin{prop}
Let \(K_1,...,K_n,K\) be quasi-compact admissible opens of \(X\). We have \(K_1 \cup \cdots \cup K_n = K\) if and only if \(\mathrm{M}(K_1)\cup \cdots \cup \mathrm{M}(K_n) = \mathrm{M}(K)\). 
\end{prop}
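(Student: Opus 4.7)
The plan is to derive this quickly from results already established in the appendix. The key tools are: (a) the compatibility of $\mathrm{M}(-)$ with finite unions of admissible opens, which follows from the primality of maximal filters, and (b) the injectivity statement that $\mathrm{M}(U) = \mathrm{M}(V)$ implies $U = V$ for admissible opens $U, V \subset X$, which was recorded as a consequence of Proposition 5.1.

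First, I would dispatch the forward direction by observing that a finite union of quasi-compact admissible opens is again a quasi-compact admissible open, and that for any finite collection of admissible opens one has $\mathrm{M}(K_1 \cup \cdots \cup K_n) = \mathrm{M}(K_1) \cup \cdots \cup \mathrm{M}(K_n)$. The containment $\mathrm{M}(K_i) \subset \mathrm{M}(K_1 \cup \cdots \cup K_n)$ is immediate from closure of filters under enlargement, and the reverse containment uses primality of maximal filters: any maximal filter containing the union $K_1 \cup \cdots \cup K_n$ must contain some $K_i$. Hence assuming $K_1 \cup \cdots \cup K_n = K$, applying $\mathrm{M}(-)$ yields the desired equality of subspaces of $\mathrm{M}(X)$.

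For the converse direction, suppose $\mathrm{M}(K_1) \cup \cdots \cup \mathrm{M}(K_n) = \mathrm{M}(K)$. By the same compatibility noted above, the left-hand side equals $\mathrm{M}(K_1 \cup \cdots \cup K_n)$. Thus $\mathrm{M}(K_1 \cup \cdots \cup K_n) = \mathrm{M}(K)$. Since both $K_1 \cup \cdots \cup K_n$ and $K$ are admissible opens of $X$, the injectivity consequence of Proposition 5.1 yields $K_1 \cup \cdots \cup K_n = K$, as required.

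I do not foresee any serious obstacle: the proposition is essentially just packaging two ingredients already present in the appendix, namely the filter-theoretic compatibility with finite unions and the injectivity of $\mathrm{M}(-)$ on the admissible poset. The only mild care needed is to confirm that a finite union of quasi-compact admissible opens remains an admissible open so that the injectivity statement is applicable, which is standard.
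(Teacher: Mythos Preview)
Your proposal is correct and follows essentially the same approach as the paper: both directions rest on the compatibility of $\mathrm{M}(-)$ with finite (admissible) unions together with the injectivity of $\mathrm{M}(-)$ on admissible opens recorded after Proposition~5.1. The paper's proof is slightly terser, but the logical structure and the two ingredients invoked are identical.
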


\begin{proof}
One direction follows easily from compatibility of \(\mathrm{M}(-)\) with admissible unions. For the other, suppose that \(\mathrm{M}(K_1) \cup \cdots \cup \mathrm{M}(K_n) = \mathrm{M}(K)\). This implies \(\mathrm{M}(K_1 \cup \cdots \cup K_n) = \mathrm{M}(K)\). However, this implies \(K_1 \cup \cdots \cup K_n = K\), as desired. 
\end{proof}

\bibliographystyle{amsplain}

\end{document}